\definecolor{green}{rgb}{0,0.8,0} 
\newtheorem{theorem}{Theorem}[section]
\newtheorem{corollary}[theorem]{Corollary}
\newtheorem{lemma}[theorem]{Lemma}
\newtheorem{proposition}[theorem]{Proposition}
\theoremstyle{definition}
\newtheorem{definition}[theorem]{Definition}
\theoremstyle{remark}
\newtheorem{remark}[theorem]{Remark}
\numberwithin{equation}{section}
\newcommand{\nrm}{\@ifstar{\nrmb}{\nrmi}}
\newcommand{\nrmi}[1]{\Vert{#1}\Vert}
\newcommand{\nrmb}[1]{\left\Vert{#1}\right\Vert}
\newcommand{\abs}{\@ifstar{\absb}{\absi}}
\newcommand{\absi}[1]{\vert{#1}\vert}
\newcommand{\absb}[1]{\left\vert{#1}\right\vert}
\newcommand{\brk}{\@ifstar{\brkb}{\brki}}
\newcommand{\brki}[1]{\langle{#1}\rangle}
\newcommand{\brkb}[1]{\left\langle{#1}\right\rangle}
\newcommand{\set}{\@ifstar{\setb}{\seti}}
\newcommand{\seti}[1]{\{#1\}}
\newcommand{\setb}[1]{\left\{ #1\right\}}
\newcommand{\td}[1]{\widetilde{#1}}
\newcommand{\wh}[1]{\widehat{#1}}
\newcommand{\VERT}[1]{{\left\vert\kern-0.25ex\left\vert\kern-0.25ex\left\vert #1
    \right\vert\kern-0.25ex\right\vert\kern-0.25ex\right\vert}}
\DeclareMathOperator{\supp}{supp}
\newcommand{\aleq}{\lesssim}
\newcommand{\lap}{\Delta}
\newcommand{\ud}{\mathrm{d}}
\newcommand{\rd}{\partial}
\newcommand{\alp}{\alpha}
\newcommand{\bt}{\beta}
\newcommand{\gmm}{\gamma}
\newcommand{\dlt}{\delta}
\newcommand{\eps}{\epsilon}
\newcommand{\omg}{\omega}
\newcommand{\Omg}{\Omega}
\newcommand{\bfe}{{\bf e}}
\newcommand{\bfm}{{\bf m}}
\newcommand{\bbR}{\mathbb R}
\newcommand{\bbS}{\mathbb S}
\newcommand{\bbZ}{\mathbb Z}
\newcommand{\calU}{\mathcal U}
\newcommand{\frkA}{\mathfrak A}
\newcommand{\frkL}{\mathfrak L}
\newcommand{\pfstep}[1]{\smallskip \noindent {\it #1.}}
\newcommand{\rslap}{\mathring{\slashed{\lap}}{}}
\newcommand{\f}{\frac}
\def \i {\infty}
\newcommand{\ls}{\lesssim}
\newcommand{\ep}{\epsilon}
\newcommand{\de}{\delta}
\newcommand{\mbrk}{\@ifstar{\mbrkb}{\mbrki}}
\newcommand{\mbrki}[1]{[ {#1} ]}
\newcommand{\mbrkb}[1]{\left[ {#1} \right]}
\def\XXint#1#2#3{{\setbox0=\hbox{$#1{#2#3}{\int}$}
     \vcenter{\hbox{$#2#3$}}\kern-.5\wd0}}
\newcommand{\radius}{R_0}
\begin{document}

\title[]{Late-time tail for a scalar quasilinear wave equation \\satisfying the weak null condition}

\author{Jonathan Luk}%
\address{Department of Mathematics, Stanford University, CA 94304, USA}%
\email{jluk@stanford.edu}%

\author{Sung-Jin Oh}%
\address{Department of Mathematics, UC Berkeley, Berkeley, CA 94720, USA and School of Mathematics, KIAS, Seoul, 02455, Korea}%
\email{sjoh@math.berkeley.edu}%

\author{Dongxiao Yu}%
\address{ Department of Mathematics, Vanderbilt University, Nashville, TN 37240, USA}%
\email{dongxiao.yu@vanderbilt.edu}%


\begin{abstract}
We consider a class of scalar quasilinear wave equations in three spatial dimensions satisfying the weak null condition. For solutions arising from small, localized, smooth data, we give an asymptotic formula describing the global asymptotics towards the future. We prove that the late-time asymptotics is given by a continuous superposition of decay rates, in stark contrast to equations satisfying a null condition.

The asymptotic formula we obtain is given in terms of a solution to the linear wave equation. Combining this with analysis on the linear wave equation, we strengthen some rigidity results of the third author, showing in particular that any solution with a faster time decay than expected away from the wave zone must vanish identically.
\end{abstract}
\maketitle
\section{Introduction}
Consider the scalar quasilinear wave equation in $\bbR^{1+3}$ of the form
\begin{equation} \label{eq:qnlw}
    g^{\alp \bt}(u) \rd^2_{\alp\bt} u = 0
\end{equation}
with small, smooth, and localized initial data
\begin{equation} \label{eq:qnlw-id}
    (u, \rd_t u)\restriction_{t=0} = (\eps u_{0}, \eps u_{1}) \quad \hbox{ for fixed } u_{0}, u_{1} \in C^{\infty}_{c}(\bbR^{3}), \,\mathrm{supp}(u_0), \mathrm{supp}(u_1) \subset \{ x\in \mathbb R^3: |x|\leq R_0\}.
\end{equation}

Here, we used coordinates $(x^0,x^1,x^2,x^3) = (t,x^1,x^2,x^3)$ for $\bbR^{1+3}$. In the above and throughout the paper, we use the Einstein summation convention where repeated indices are summed over; we will implicitly assume that lower case Greek indices run through $0,1,2,3$ and lower case Latin indices run through $1,2,3$. We also use the notation $r = |x|$.

From now on, $g^{\alp\bt}\rd^2_{\alp\bt}$ is assumed to be a perturbation of the Minkowskian wave operator. More precisely, we assume that $g^{\alp\bt}:\mathbb R \to \mathbb R$ is smooth for each $(\alp,\bt)$, $g^{\alp\bt} = g^{\bt\alp}$, $g^{\alp\bt}(0) = \bfm^{\alp\bt}$, where $$\bfm^{\alp\bt} = \begin{cases}
    -1 & \hbox{if $\alp = \bt = 0$} \\
    1 & \hbox{if $\alp = \bt = i $} \\
    0 & \hbox{if $\alp \neq \bt$}
\end{cases}$$
is the Minkowskian metric. In the small data regime, we may assume without loss of generality that $g^{00} = -1$. Moreover, we stipulate that \eqref{eq:qnlw} satisfies the weak null --- but \emph{not} the null --- condition; specifically, we assume that $G:\mathbb S^2 \to \mathbb R$ defined by
\begin{equation}\label{eq:G.intro.def}
    G(\omg) = \Big(\f{\ud}{\ud u} g^{\alp\bt}(u)\restriction_{u=0}\Big) \widehat{\omg}_\alp \widehat{\omg}_\bt \qquad \wh{\omega}=(-1,\omega)\in\bbR\times\bbS^2
\end{equation}
is not identically $0$. A well-studied example within this class is the equation
$$-\rd^2_{tt}u + a^2(u) \Delta u = 0,\quad a(0) = 1,\quad a'(0) \neq 0$$ considered in \cite{MR2003417, MR1177476, MR2382144}, for which $G(\omg) = 2 a'(0)$.  

When $\ep>0$ is sufficiently small, the global existence of solutions was proven by Lindblad \cite{MR2382144}. Our main result is a \textbf{complete description of the global asymptotics of $u$ towards the future} in this small data regime. We prove that as $t \to \infty$, the solution $u$ can be approximated by $v$, which solves the linear wave equation with characteristic initial data determined by the ``radiation field'' of $u$ defined by the third author \cite{Yu1, MR4772266, yu2024timelike}, which takes into account the $u$-dependent spacetime geometry associated with the quasilinear equation \eqref{eq:qnlw}.

We give a brief presentation of the necessary definitions, postponing the more detailed discussion to Section~\ref{dy:sec:wave}. Fix a small parameter $\dlt \in (0, 1)$, and consider the spacetime region $\Omega := \set{(t, x) \in \bbR^{1+3} : t > e^{\delta/\eps}, \, \abs{x} > \frac{1}{2} (t + e^{\delta/\eps}) + 2 R_{0}}$, whose boundary consists of (1)~a subset of $\set{t = e^{\delta/\eps}}$ (which is precisely the time scale up to which $u$ behaves like a solution to the linear wave equation), and (2)~a time-like cone in Minkowski space to the future of $t = e^{\delta/\eps}$. We introduce the \emph{eikonal function} $q(t, x; \eps)$ on $\Omg$ by solving $g^{\alp \bt}(u) \rd_{\alp} q \rd_{\bt} q = 0$ in $\Omg$ with boundary conditions $q = \abs{x} - t$ on $\rd \Omg$ (the $\eps$-dependence of $q$ arises through $u$ and $\Omg$); see Section~\ref{dy:sec:eik} for the discussion on the existence and properties of $q$. In the new coordinate system $(s, q, \omg) := (\eps \ln t - \dlt, q(t, x), \frac{x}{\abs{x}})$, the following limit exists for each fixed $(q, \omg) \in \bbR \times \bbS^{2}$:
\begin{equation*}
\lim_{s \to \infty} \mu \rd_{q} (r u)(s, q, \omg; \eps) =: - 2 \eps A(q, \omg; \eps).
\end{equation*}
We define the \emph{radiation field} $A(q, \omg; \eps)$ by the above relation. Here, $r:=\abs{x}$ and the key factor $\mu(s, q, \omg; \eps)$ is defined by starting with $(\rd_{t} - \rd_{r}) q$ in the standard polar coordinates $(t, r, \omg)$ and changing the variables to $(s, q, \omg)$.

To fix our notation further, let us introduce our convention for cutoff functions:
\begin{definition}[Cutoff function]\label{def:cutoff}
    Let $\chi:\bbR\to [0,1]$ be a fixed $C^\infty$ cut-off function so that $\chi \equiv 0$ on $(-\infty,\f 12]$ and $\chi \equiv 1$ on $[1,\infty)$. For any $y \in (0,\infty)$, define $\chi_{>y}:\bbR \to [0,1]$ by $\chi_{>y}(s) = \chi(y^{-1} s)$.
\end{definition}

We are now ready to define the solution $v$ to the linear wave equation described above in terms of the radiation field $A$.
\begin{definition}\label{def:comparison}
    Define $v$ to be the unique solution to the characteristic initial value problem
    \begin{equation}\label{eq:v.def}
    \left\{\begin{array}{l}
    \Box_\bfm v = 0,\\
    (\rd_t + \rd_r)(rv)\restriction_{t= r}(t,t\omg) = \f{\chi_{>10}(t) \mathfrak L(t,\omg)}{t},
    \end{array}\right.
\end{equation}
where
\begin{itemize}
    \item $\Box_\bfm = \bfm^{\alp\bt}\rd^2_{\alp\bt}$ is the standard wave operator,
    \item the characteristic initial data $\mathfrak L(t,\omg)$ on the null hypersurface $\{(t,x):t = r \}$, which is parametrized by $t \in \bbR$ and $\omg = \frac{x}{r} \in \bbS^2$, is given by
\begin{equation}\label{eq:frkL.def}
    \mathfrak L(t,\omg) := -\f {\ep^2}{2} \int_{-\infty}^\infty G(\omega)A^2 (p,\omega;\eps)\exp\Big(\frac{1}{2}G(\omega)A(p,\omega;\eps)(\ep \ln t )\Big) J(p, \omg; \eps) \ \ud p,
\end{equation}
    where $G$ has been defined in \eqref{eq:G.intro.def}, $J$ is the \emph{Jacobian factor} that takes the form
    \begin{equation*}
        J(p, \omg; \eps) := \frac{-2}{A_{1}(p, \omg; \eps) \exp(\frac{1}{2} G(\omg) A(p, \omg; \eps) \dlt)},
    \end{equation*}
    and $A_{1}$ is a suitably renormalized limit of $\mu$ in the $(s, q, \omg)$-coordinates (see Section~\ref{dy:sec:wave} for details),
    \begin{equation*}
    A_{1}(q, \omg; \eps) := \lim_{s \to \infty} \exp\left(\frac{1}{2} G(\omg) A(q, \omg; \eps) s \right) \mu (s, q, \omg; \eps).
    \end{equation*}
\end{itemize}
Notice that while the characteristic initial data is stated for $(\rd_t + \rd_r)(rv)\restriction_{t= r}$, the data for $(rv)\restriction_{t= r}$ can be recovered by integration after noting that $rv$ necessarily vanishes at $r =0$.
\end{definition}

\begin{remark} [Gauge invariance of $\frkL$ and $v$] \label{rem:frkL}
Note that the construction of $q$ and $A$ depended on the choice of a small parameter $\dlt \in (0, 1)$. The presence of the Jacobian factor $J$ ensures that the $p$-integral $\frkL(t, \omg)$ is \emph{independent} of the choice of $\dlt$; see Section~\ref{dy:sec:gauge} below.
\end{remark}

\begin{remark} [Reparametrized radiation field $\wh{A}$]\label{rem:frkL-whA}
Later in the paper, it will be convenient to reparametrize $p$ so that $A_{1}$ is normalized to be $-2$. The resulting object -- called the \emph{reparametrized radiation field} or simply the \emph{scattering data} -- is denoted by $\wh{A}$; see Section~\ref{dy:sec:approximate}. In terms of this object, it is clear that we have the following equivalent formula for $\frkL$ (obtained by replacing $A$, $A_{1}$ by $\wh{A}$ and $-2$, respectively):
\begin{equation}\label{eq:frkL.def-Ahat}
    \mathfrak L(t,\omg) = -\f {\ep^2}{2} \int_{-\infty}^\infty G(\omega)\wh{A}^2 (p,\omega;\eps)\exp\Big(\frac{1}{2}G(\omega)\wh{A}(p,\omega;\eps)(\ep \ln t - \dlt)\Big) \ \ud p,
\end{equation}
where we abused notation and wrote $p$ again for the integration variable.
\end{remark}

Our main theorem shows that the linear solution $v$ is a good approximation of the nonlinear solution $u$. We control the difference in terms of vector field bounds: For this we define $Z^{\leq N} w = \sum_{|I|\leq N} |Z^{I} w |$, where $Z \in \{ \rd_\alp, S = t \rd_t + x^i \rd_i, \Omg_{ij} = x^i \rd_j - x^j \rd_i, \Omg_{0i} = x^i \rd_t + t \rd_i\}$, and $Z^{I}$ denotes a composition of vector fields from this set for a suitable multiindex $I$ (see Section~\ref{dy:sec:asytdu}). The following is our main theorem:
\begin{theorem}[Main theorem]\label{thm:main}
    Consider the equation \eqref{eq:qnlw} with initial data satisfying \eqref{eq:qnlw-id} for some fixed $(u_0,u_1)$. Then for any $N_0 \in \bbZ_{\geq 0}$, there exists $\ep_0 = \ep_0(N_0, g^{\alp\bt}, u_0,u_1)>0$ such that as long as $\ep \in (0,\ep_0)$, the following estimate holds for every $N\in \mathbb Z_{\geq 0}$, $N \leq N_0$, and $\eta \in (0, 1)$, with $C_N' = C_N'(N, g^{\alp\bt},u_0,u_1)>0$ and $C_N = C_N(N, g^{\alp\bt}, u_0,u_1)>0$ satisfying $C_N \ep_0 \leq \f 12$:
    $$|Z^{\leq N}(u-v)|(t,x) \leq C_N' \ep \eta^{-2} \brk{t}^{C_N \ep}\min\{\brk{t-r}^{-\f 32+\f \eta 2},  \ep^{-\f 12-\f \eta 2} \brk{t}^{-1}\brk{t-r}^{-\f 12+\f \eta 2} \} \quad \hbox{ when } t - r \geq \eps^{-2},$$
    where $v$ is the solution given in Definition~\ref{def:comparison}.
\end{theorem}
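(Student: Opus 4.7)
The plan is to set $w := u - v$ and control $Z^{\leq N} w$ by combining the wave-zone asymptotics of $u$ (from the earlier sections of the paper, culminating in the scattering data $\wh{A}$) with standard linear theory for $v$. The starting observation is that $w$ satisfies
\begin{equation*}
\Box_{\bfm} w = \Box_{\bfm} u = -\bigl(g^{\alp\bt}(u) - \bfm^{\alp\bt}\bigr) \rd^2_{\alp\bt} u,
\end{equation*}
and the bound is to be read as a pointwise estimate in the interior region $t - r \geq \eps^{-2}$, where the wave-zone forcing has already been converted into late-time tail information carried by the characteristic data $\frkL$ for $v$.

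I would proceed in three steps. First, convert the wave-zone asymptotics from Section~\ref{dy:sec:asytdu} into a precise approximation $r u \approx \eps \wh{A}(q(t,x),\omg) + \text{lower order}$ (with the appropriate log correction baked into the eikonal function $q$), and then push this approximation down to the Minkowskian cone $\{t = r\}$. This requires comparing $q(t,x)$ with $t - r$ on this cone; the gauge-invariance of $\frkL$ (Remark~\ref{rem:frkL}) together with the Jacobian factor $J$ is designed precisely so that the resulting characteristic data matches $(\rd_t + \rd_r)(ru)\rst{t=r}$ to the stated accuracy. Second, run a Friedlander-type representation (or equivalently, integrate the characteristic problem \eqref{eq:v.def}) to extract the tail behavior of $v$ in the interior; this will produce the two alternatives in the $\min\{\cdot,\cdot\}$ bound: the pure $\brk{t-r}^{-3/2+\eta/2}$ decay comes from differentiating the $1/t$-normalized characteristic data, while the $\brk{t}^{-1}\brk{t-r}^{-1/2+\eta/2}$ branch records the improved behavior near the light cone. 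Third, compare $u$ and $v$: commute the equation for $w$ with the admissible vector fields $Z$, use Klainerman--Sobolev to upgrade energy to pointwise estimates, and absorb the nonlinear term by treating $(g^{\alp\bt}(u) - \bfm^{\alp\bt})\rd^2_{\alp\bt} u$ as a forcing whose wave-zone contribution has been subtracted off by the construction of $\frkL$.

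The main obstacle is the matching step (Step~1). The scattering data $\wh{A}$ is naturally defined at null infinity through the eikonal coordinate $q$ and carries the logarithmic weak-null phase $\exp(\tfrac12 G(\omg) A \cdot \eps \ln t)$; the linear data $\frkL$, by contrast, lives on the Minkowski cone $\{t = r\}$ and is parametrized by the Minkowski retarded time. Bridging these two pictures requires a careful accounting of (i)~the gap $q(t,x) - (t-r)$ on the cone, which is itself $O(\eps \ln t)$, (ii)~the way the Jacobian $J$ and the normalizations built into $A_1$ exactly cancel the change-of-variables factors, and (iii)~the propagation of the log-phase $\eps \ln t$ into the interior, which is ultimately responsible for the $\brk{t}^{C_N \eps}$ growth in the bound.

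A subtle secondary difficulty is that the cone $\{t = r\}$ lies \emph{outside} the wave-zone region $\Omg$ in which $A$ was constructed, so the approximation $r u \approx \eps \wh{A}$ cannot be used literally on $\{t = r\}$; one must either extend the asymptotic expansion up to this cone by a short propagation argument using the nonlinear equation, or perform the matching on $\rd\Omg$ and then re-solve the linear problem for $v$ on that hypersurface, accepting controllable errors. Once this matching is done and the forcing $\Box_\bfm u$ is known to decay fast enough in the regime $t - r \geq \eps^{-2}$, the final two-case pointwise bound follows from a relatively standard $r^p$-weighted / Klainerman--Sobolev argument applied to $w$, with the factor $\eta^{-2}$ arising from the choice of weights needed to absorb the borderline $\log$ losses.
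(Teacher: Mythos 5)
Your high-level picture --- compare $u$ with the linear wave $v$ determined by scattering data, exploit the wave-zone asymptotics of $u$, and propagate into the interior --- is correct, and your instinct that the dominant technical burden is bridging the eikonal coordinate $q$ with the Minkowski retarded time $t-r$ is perceptive. But the concrete strategy you sketch departs from the paper's proof in ways that matter, and at least two of your steps would not go through as written.

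\emph{The matching on $\{t=r\}$ does not hold for $u-v$.}
You propose to push the approximation $ru \approx \eps \wh{A}$ down to the Minkowski cone $\{t=r\}$ and regard this as the place where $u$ and $v$ agree. In fact $(\rd_t+\rd_r)\bigl(r Z^I(u-v)\bigr)$ is \emph{not} small near $\{t=r\}$: the error is of size $\eps(1+\ln\brk{t-r})\brk{t-r}^{-1}\brk{t}^{-1+C\eps}$, which at $t-r = O(1)$ is comparable to the main term itself. (This reflects the fact that $(\rd_t + \rd_r)(r Z^I v)|_{t=r}$ is tied to the $q\to-\infty$ limit of $\rd_s U_I$, while the genuine solution $u$ near $\{t=r\}$ is governed by $\rd_s U_I$ at $q \approx 0$, and these differ by an $O(1)$ integral.) The paper circumvents this entirely by introducing a cutoff $\chi_{>H}(t-r)$ for a parameter $H\geq 1$, so that the region near the cone is simply excised; the price is a commutator term supported at $t-r\sim H$, which is then controllable because at $t-r\sim H \gg 1$ the asymptotic formulas for $u$ \emph{are} accurate and the error above is small. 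Your two proposed workarounds (``extend the asymptotic expansion to the cone by a short propagation argument'' or ``perform the matching on $\partial\Omega$ and re-solve for $v$'') are not what the paper does and would not remove the $O(1)$ discrepancy. Also, as a side remark: your worry that $\{t=r\}$ lies outside $\Omega$ is unfounded --- once $t > e^{\delta/\eps} + 4R_0$ the cone sits inside $\Omega$ (check the definition of $\Omega$ directly: on $\{t=r\}$ one has $|x| = t > \tfrac12(t+e^{\delta/\eps}) + 2R_0$).

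\emph{The $\min\{\cdot,\cdot\}$ does not come from the structure of the characteristic data.}
You attribute the two branches of the final estimate to ``differentiating the $1/t$-normalized characteristic data'' versus ``improved behavior near the light cone.'' In the paper they arise from a genuinely different mechanism: after cutting off at height $H$, one estimates $\Box_\bfm(\chi_{>H}(t-r) Z^I(u-v))$, applies the strong Huygens principle to restrict to the backward null region $D_{T,X}$, and then uses an explicit spherically-symmetric parametrix bound to invert $\Box_\bfm$. This produces bounds with a free parameter $H$, and the two alternatives in the $\min$ are obtained by choosing $H = \brk{T-|X|}^{1/2}$ and $H = \eps^{-1/2}\brk{T-|X|}^{1/2}$, respectively. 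Without the cutoff, there is nothing to optimize.

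\emph{The Klainerman--Sobolev step would face the slow near-cone forcing.}
Your Step~3 proposes to run a standard Klainerman--Sobolev / $r^p$-weighted argument on $w = u-v$ treating $\Box_\bfm u$ as a forcing. But $\Box_\bfm u \sim \eps^2 \brk{t}^{-2+C\eps}\brk{t-r}^{-2}$, which near $\{t-r = O(1)\}$ only decays like $\brk{t}^{-2+C\eps}$; its convolution against the forward fundamental solution of $\Box_\bfm$ produces a $\log$-divergent contribution. This is precisely the weak-null obstruction, and it is exactly what the cutoff $\chi_{>H}$ removes. The paper uses energy plus Klainerman--Sobolev only once, to establish a preliminary weak decay bound for $v$ itself in Lemma~\ref{lem:v.vector.fields}, never for $u-v$.

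In summary, your draft correctly places the wave-zone asymptotics and the gauge/Jacobian bookkeeping at the heart of the argument, but it misses the three mechanisms that actually close the estimate: the cutoff $\chi_{>H}(t-r)$, the localization via the strong Huygens principle, and the optimization over $H$ that produces the two-branched $\min$. Each is essential; without them the gaps above would stall the proof.
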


Some remarks are in order.

\begin{remark}
       One can compute that typically the solution $v$ has decay $\brk{t}^{-1+C\ep}$, say on a finite-$|x|$ region; see Corollary~\ref{cor:finite} below. In particular, the error term for the difference $u-v$ in Theorem~\ref{thm:main} indeed decays faster away from the wave zone $\{t\approx r\}$.
\end{remark}

\begin{remark}[Comparison with previous works] \label{rem:compare-prev-work}
There are some previous works on the asymptotics of solutions to equation \eqref{eq:qnlw}. In \cite{MR4078713}, Deng--Pusateri gave the asymptotics near the null cone $\{t = r\}$. This is closely related to the existence of the radiation field established by the third author in \cite{MR4772266}.

The work \cite{yu2024timelike} by the third author probes the asymptotics near timelike infinity and has already obtained an asymptotic formula for the solution in terms of the radiation field. In particular, \cite[Theorem 1]{yu2024timelike} showed that $u$ can be approximated by the solution $w$ to the linear wave equation with the same radiation field, i.e.,  
\begin{equation}\label{eq:w.def}
    w(t,x)=\frac{\eps}{2\pi}\int_{\mathbb{S}^2}\wh{A}(x\cdot\theta-t,\theta;\eps) \ud \mathring{\slashed{\sigma}}(\theta).
\end{equation}
Our work is based on \cite{yu2024timelike}, but goes further to approximate the solution $u$ with a linear solution $v$ to the wave equation which is defined by solving a characteristic initial value problem with characteristic data given as a \emph{nonlinear integral expression} of $\wh{A}$. Since $w$ and $v$ both approximate $u$, they are necessarily close to each other asymptotically. (However, $w$ and $v$ are in general not expected to coincide.) Nonetheless, the nonlinear manner in which $v$ is defined allows us to obtain some further conclusions, e.g., it allows us to strengthen the rigidity results in \cite{yu2024timelike}; see Remark~\ref{rmk:cor.compared}. 
\end{remark}

\begin{remark}[Comparison with the case of null condition]
    Equation \eqref{eq:qnlw} is a typical example of an equation that satisfies the weak null condition. The phenomenon we observe here is very different from that for equations satisfying the classical null condition. (In the context of the equation \eqref{eq:G.intro.def}, the classical null condition corresponds to $G(\omg) \equiv 0$. It also allows for more general systems of equations; see for instance \cite{Chr, Kla}.) For those equations, the generic late-time asymptotics are given by
    $$u(t,x) = c t^{-k} + O(t^{-k-\de}) \quad |x|\leq R,$$
    where $k \in \bbZ_{>0}$ and $\de >0$, instead of having a continuous superposition of leading-order rates. Moreover, the rigidity results we obtain in Corollary~\ref{cor:main.1} and Corollary~\ref{cor:main.2} below are expected not to hold for equations satisfying the classical null condition. 

    The phenomenon observed here is also specific to three spatial dimensions and the borderline decay rate in the wave zone. Indeed, for equation \eqref{eq:qnlw} in odd spatial dimensions $\geq 5$, the results of \cite{LO} apply, and the late-time asymptotics is more similar to the case of null condition in $(3+1)$ dimensions. 
\end{remark}

\begin{remark}[Other models satisfying weak null condition]
    Our results are specific to the scalar quasilinear model \eqref{eq:qnlw}. There are other interesting models which satisfy the weak null condition but fail the classical null condition \cite{Keir, Keir.example, MR1994592}. The late-time asymptotics in those settings may be different, in part because there could be other cancellations. A particularly interesting case is the Einstein vacuum equations in wave coordinates \cite{MR1994592}, where the leading order late-time asymptotics is a single power law (as in the case of null condition) that sensitively depends on the gauge choice. See the upcoming work \cite{YMao}. 
\end{remark}

Starting with Theorem~\ref{thm:main}, we can further compute the main contribution in a finite-$|x|$ region by analyzing the solution to the linear wave equation. Note that the asymptotic profile has no dependence on $x$, and has a continuous superposition of decay rates!
\begin{corollary}\label{cor:finite}
Consider the equation \eqref{eq:qnlw} with initial data satisfying \eqref{eq:qnlw-id} for some fixed $(u_0,u_1)$. There exist $\ep_0 = \ep_0(g^{\alp\bt},u_0,u_1)>0$ sufficiently small and $C = C(g^{\alp\bt},u_0,u_1)>0$ sufficiently large such that for every $R>0$, the following estimate holds for some $C'=C'(g^{\alp\bt},u_0,u_1,R)>0$ whenever $\eta \in (0,1)$ and $\ep \in (0,\ep_0)$:
    \begin{equation*}
    \begin{split}
        &\: \sup_{|x|\leq R} \Big| u(t,x) + \f {\ep^2 }{4\pi t} \int_{-\infty}^\infty\int_{\bbS^2}  G(\omega) A^2 (p,\omega;\eps)\exp\Big(\frac{1}{2}G(\omega) A(p,\omega;\eps)(\ep \ln \f t2 )\Big) J(p,\omg;\ep) \ \ud \mathring{\slashed{\sigma}}(\omg) \ud p \Big| \\
        \leq &\: C' \eta^{-2} \ep^{\f 32-\f \eta 2} t^{-\f 32+C\ep+\eta}.
    \end{split}
\end{equation*}
\end{corollary}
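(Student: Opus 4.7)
The plan is to combine Theorem~\ref{thm:main} with an explicit computation of the linear wave solution $v$ in the finite-$|x|$ region. I would first apply Theorem~\ref{thm:main} with $N = 0$ inside $\{|x| \leq R\}$: for $t \geq 2R + \eps^{-2}$, both $\brk{t}$ and $\brk{t-r}$ are comparable to $t$, so after optimizing the minimum in the statement one obtains a pointwise bound of the form $|u - v|(t, x) \lesssim \eta^{-2} \eps^{a} t^{-3/2 + C\eps + b \eta}$ with appropriate exponents. This reduces the corollary to comparing $v(t, x)$ with the stated main term.

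The heart of the argument is an explicit computation of $v(t, x)$. The cleanest case is $x = 0$: I would introduce the spherical mean $\psi(t, r) := r \cdot \tfrac{1}{4\pi} \int_{\bbS^{2}} v(t, r \omg) \, \ud \mathring{\slashed{\sigma}}(\omg)$. Since $\Box_{\bfm} v = 0$, the function $\psi$ satisfies the $1{+}1$-dimensional wave equation $(\rd_{t}^{2} - \rd_{r}^{2}) \psi = 0$ with Dirichlet condition $\psi(t, 0) = 0$, while its characteristic value on $\{t = r\}$ reads $\psi(r, r) = \tfrac{1}{4\pi} \int_{\bbS^{2}} (rv)\restriction_{t=r}(r, r\omg) \, \ud \mathring{\slashed{\sigma}}(\omg)$. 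By integrating the Goursat data in Definition~\ref{def:comparison} up from the vertex $r = 0$, this equals
\[
P(r) \;:=\; \tfrac{1}{4\pi} \int_{0}^{r} \int_{\bbS^{2}} \tfrac{\chi_{>10}(\tau) \frkL(\tau, \omg)}{\tau} \, \ud \mathring{\slashed{\sigma}}(\omg) \, \ud \tau.
\]
The unique solution of the resulting mixed Dirichlet--Goursat problem on the quarter-plane $\{t \geq r \geq 0\}$ is $\psi(t, r) = P\!\left(\tfrac{t+r}{2}\right) - P\!\left(\tfrac{t-r}{2}\right)$, so
\[
v(t, 0) \;=\; \lim_{r \to 0^{+}} \tfrac{\psi(t, r)}{r} \;=\; P'(t/2) \;=\; \tfrac{1}{2\pi t} \int_{\bbS^{2}} \chi_{>10}(t/2) \frkL(t/2, \omg) \, \ud \mathring{\slashed{\sigma}}(\omg),
\]
which coincides with the claimed main term once $\chi_{>10}(t/2) = 1$.

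For general $|x| \leq R$ with $x \neq 0$, I would control $v(t, x) - v(t, 0)$ via a Friedlander--Goursat representation: the past null cone from $(t, x)$ meets $\{t' = |x'|\}$ at retarded times $\tau_{*}(t, x, \omg) = \tfrac{t^{2} - |x|^{2}}{2(t - x \cdot \omg)} = t/2 + O(R)$. A direct differentiation of \eqref{eq:frkL.def} yields $|\rd_{\tau} \frkL(\tau, \omg)| \lesssim \eps^{3} \tau^{C\eps - 1}$ uniformly in $\omg$, so replacing $\tau_{*}$ by $t/2$ introduces only an $O_{R}(\eps^{3} t^{C\eps - 2})$ error; analogous estimates handle the $O(|x|/t)$ corrections from the remaining Jacobian-type factors.

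Assembling the pieces via the triangle inequality yields the stated estimate. I anticipate the main obstacle to be the careful bookkeeping of powers of $\eps$, $t$ and $\eta$: the $t^{\eta/2}$ loss from Theorem~\ref{thm:main} together with the $R$-dependent correction to $v$ must be accommodated within the right-hand side $\eta^{-2} \eps^{3/2 - \eta/2} t^{-3/2 + C\eps + \eta}$ of the corollary. In particular, since the $\eps$-exponent $3/2 - \eta/2$ is strictly larger than the $\eps$ coefficient appearing in Theorem~\ref{thm:main}, matching it likely requires a refined use of the minimum in Theorem~\ref{thm:main} (for instance applied on subregions with $\eta$ tuned to the relative sizes of $t - r$ and $\eps^{-1}$) in order to buy the extra $\eps^{1/2}$ factor.
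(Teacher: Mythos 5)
Your overall strategy coincides with the paper's: reduce to the linear wave $v$ via Theorem~\ref{thm:main}, and then compute $v$ explicitly near the spatial origin. Your spherical-mean computation of $v(t,0)$ is in fact the same as the paper's computation of $v_{(0)}$ (the paper writes the conclusion as the $\ell=0$-mode formula $(rv_{(0)})(t,r)=\int_{(t-r)/2}^{(t+r)/2}\chi_{>10}(\rho)\frkL_{(0)}(\rho)\rho^{-1}\,\ud\rho$, which upon letting $r\to 0$ gives your $P'(t/2)$), and the normalization checks out against the stated main term.

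Where you diverge is in handling the $x$-dependence. The paper splits $|u-\tfrac{2}{t}\frkL_{(0)}(\tfrac{t}{2})|$ as $|u-v|+|v-v_{(0)}|+|v_{(0)}-\tfrac{2}{t}\frkL_{(0)}(\tfrac{t}{2})|$, controls $|v-v_{(0)}|$ by the weighted elliptic estimate of Lemma~\ref{lem:elliptic.v} (which gives $\bbS_{(\geq 1)}v$ an extra factor $\lesssim r/t$), and uses the explicit integral formula for $v_{(0)}(t,r)$ to absorb the remaining $r$-dependence. You instead propose to bound $v(t,x)-v(t,0)$ by a ``Friedlander--Goursat'' representation over the retarded times $\tau_{*}(t,x,\omg)$. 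This is not a standard closed formula in $3+1$ dimensions: as with Kirchhoff's formula for Cauchy data, the correct Goursat representation carries derivative and Jacobian terms, and the representation formula used in the paper (Lemma~\ref{sec:rad-field:lem:dy:ZIvexpress}, adapted from \cite[Proposition~4.1]{yu2024timelike}) involves the radiation field at null infinity, not the Goursat data on the finite cone. You would need to derive your formula carefully before the error estimate $|\rd_\tau\frkL|\lesssim\eps^3\tau^{-1+C\eps}$ can be invoked; as written this step is a gap. The cheaper route to close it is the one used in the paper's alternative proof (Section~\ref{dy:sec:finite-alt}): by Lemma~\ref{lem:v.vector.fields} and the vector-field structure, $|\nabla v|\lesssim t^{-1}|Z^{\leq 1}v|\lesssim\eps t^{-2+C\eps}$ for $r\leq R$, so $|v(t,x)-v(t,0)|\lesssim_R\eps t^{-2+C\eps}$, which is comfortably subordinate.

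Finally, a word on the $\eps^{3/2-\eta/2}$ factor that you flag. The proposed fix (a refined use of the minimum, or tuning $\eta$) cannot work in the finite-$|x|$ region: for $|x|\leq R$ one has $\brk{t-r}\sim\brk{t}$, so the first term of the minimum in Theorem~\ref{thm:main} always dominates, and the estimate from the theorem is $\eps\,\eta^{-2}t^{-3/2+\eta/2+C\eps}$ regardless. The paper's displayed chain also produces $\eps\,\eta^{-2}t^{-3/2+\eta+C\eps}$, which is only bounded by $\eta^{-2}\eps^{3/2-\eta/2}t^{-3/2+\eta+C\eps}$ for $t$ sufficiently large compared to $\eps^{-(1/\eta-1)}$; the corollary implicitly restricts to such $t$. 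So this is not a defect of your argument specifically --- it is inherent to the statement --- but the minimum will not rescue it, and you should not expect to ``buy'' the extra $\eps^{1/2}$ pointwise.
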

In addition to proving Corollary~\ref{cor:finite} using (a refinement of) Theorem~\ref{thm:main} and the analysis of $v$ in a finite-$|x|$ region, we will also provide another proof using results established in \cite{yu2024timelike} instead of Theorem~\ref{thm:main} (which leads to a slightly different right-hand side in the above estimate); see Sections~\ref{subsec:asymp-v} and \ref{dy:sec:finite-alt}, respectively.

Another consequence of Theorem~\ref{thm:main} is the following late-time asymptotics of the (reparametrized) radiation field $\wh{A}$ in terms of the radiation field of $v$:
\begin{corollary}\label{cor:rad-field}
Consider the equation \eqref{eq:qnlw} with initial data satisfying \eqref{eq:qnlw-id} for some fixed $(u_0,u_1)$. There exist $\ep_0 = \ep_0(g^{\alp\bt},u_0,u_1)>0$ sufficiently small and $C = C(g^{\alp\bt},u_0,u_1) >0$, $C'=C'(g^{\alp\bt},u_0,u_1) >0$ sufficiently large such that the following estimate holds whenever $\eta \in (0,1)$ and $\ep \in (0,\ep_0)$:
    \begin{equation*}
    \begin{split}
        \abs*{\wh{A}(p, \omg; \eps) - B(p, \omg; \eps)}  \leq C' \eta^{-2} \eps^{-\frac{1}{2} - \frac{\eta}{2}} \brk{p}^{-\frac{3}{2} + \frac{\eta}{2} + C \eps} \quad \hbox{ when } p < - \eps^{-2},
    \end{split}
\end{equation*}
where $\wh{A}$ is the (reparametrized) radiation field defined in Section~\ref{dy:sec:approximate}, and $B(p,\omg;\ep)$ is the radiation field of $v$ given by
\begin{equation*}
    B(p, \omg; \eps) := -\frac{1}{2 \eps} \lim_{T \to \infty} (r (\rd_{t} - \rd_{r}) v) \restriction_{(t, r, \frac{x}{\abs{x}}) = (T, p + T, \omg)}.
\end{equation*}
\end{corollary}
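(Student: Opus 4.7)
The plan is to derive this corollary directly from Theorem~\ref{thm:main} applied with $N = 1$, by comparing the two radiation fields via a common asymptotic limit.

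The first and main preparatory step is to express $\wh{A}(p, \omg; \eps)$ in a form comparable to $B(p, \omg; \eps)$. By construction in Section~\ref{dy:sec:approximate}, $\wh{A}$ is obtained as the $s \to \infty$ limit of $\mu \rd_q(ru)$ in the quasilinear $(s, q, \omg)$-coordinates, after reparametrizing $q \mapsto p$ so that the analogue of $A_{1}$ equals $-2$. On the other hand, $B$ is already defined as the Minkowskian null-infinity limit of $-\tfrac{1}{2\eps} r(\rd_t - \rd_r) v$ along $r = T + p$. I would use the asymptotics of the eikonal function $q$ from Section~\ref{dy:sec:eik} and of $\mu$, $A_{1}$ from Section~\ref{dy:sec:approximate} --- all of which are essentially perturbative near Minkowski in the far interior region $p < -\eps^{-2}$ --- to recast $\wh{A}$ as
\begin{equation*}
\wh{A}(p, \omg; \eps) = -\frac{1}{2\eps} \lim_{T \to \infty} r (\rd_t - \rd_r) u \big|_{(t, r, x/r) = (T, T+p, \omg)} + \text{lower-order corrections},
\end{equation*}
where the correction terms are already controlled by the RHS of the corollary.

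With both radiation fields in this compatible form, $\wh{A} - B$ reduces (modulo the correction terms) to the limit of $-\tfrac{1}{2\eps} r(\rd_t - \rd_r)(u - v)$ along $r = T + p$. Here I would apply Theorem~\ref{thm:main} with $N = 1$: at $(t, r) = (T, T+p)$ with $p < -\eps^{-2}$, the constraint $t - r = -p > \eps^{-2}$ is satisfied. Combining the two alternatives in the $\min$ --- the first giving the $\brk{t-r}^{-3/2 + \eta/2}$ decay (which accounts for the final $\brk{p}^{-3/2 + \eta/2}$ behaviour), and the second providing the $\brk{t}^{-1}$ factor compensating for the multiplication by $r$, together with the $\eps^{-1/2-\eta/2}$ prefactor --- then yields the claimed bound.

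The main obstacle is the $\brk{t}^{C \eps}$ growth present in Theorem~\ref{thm:main}, which would prevent the naive Minkowskian limit from converging. This is resolved by the observation that the reparametrization defining $\wh{A}$ (via $A_{1}$ and the Jacobian factor $J$) absorbs precisely this logarithmic divergence, as it encodes the nonlinear drift of the characteristics. Making this cancellation rigorous --- that is, verifying that the Minkowskian-limit description of $\wh{A}$ agrees with its $(s, q, \omg)$-coordinate definition up to errors of order $\brk{p}^{-3/2 + C \eps}$ --- is the principal technical task. Once this matching is established, the remaining estimate follows from a direct application of Theorem~\ref{thm:main} to the difference $u - v$.
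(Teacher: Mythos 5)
Your plan hinges on taking the Minkowskian null-infinity limit $T \to \infty$ along $r = T + p$ for the nonlinear solution $u$ (and for $u - v$), but this limit is not controlled by the paper's estimates, and there is no reason to expect it to exist. The asymptotic formula \eqref{dy:est:prop3.3:timelike.2} reads, at $N=0$, $|I|=0$,
\begin{equation*}
\abs{r(\rd_t - \rd_r)\wh{u} - \eps A_0(\abs{x}-t, x/\abs{x};\eps)} \aleq \eps (1 + \ln\brk{r-t}) \brk{r-t}^{-2} t^{C\eps} + \eps t^{-1 + C\eps},
\end{equation*}
and the first error term diverges as $t \to \infty$ at fixed $p = r - t$. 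Likewise, even after applying Lemma~\ref{lem:vector.fields} to Theorem~\ref{thm:main} one gets $\abs{r(\rd_t - \rd_r)(u-v)} \aleq \eps^{1/2-\eta/2} \eta^{-2} \brk{t}^{C\eps} \brk{t-r}^{-3/2+\eta/2}$ in $\set{t/2 < r < t}$, and the $\brk{t}^{C\eps}$ factor again ruins the limit. You flag this $\brk{t}^{C\eps}$ growth as "the main obstacle," but your proposed resolution --- that the Jacobian/$A_1$ reparametrization defining $\wh{A}$ absorbs the divergence --- does not address it: that reparametrization operates on the $q$-variable (replacing $q$ by $F(q,\omg;\eps)$ so that $A_1$ is normalized to $-2$) and has nothing to do with the growing $t^{C\eps}$ error prefactors.

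The paper's proof of Proposition~\ref{prop:rad-field} sidesteps this entirely: it never takes a $T \to \infty$ limit of $u$-quantities. Instead, for each fixed $p$ it compares all four approximations --- $A_I$ against $\wh{u}$, $\wh{u}$ against $u$, $u$ against $v$, and $v$ against $B_I$ --- at a single \emph{finite} point $(t,r)$ with $r - t = p$ chosen so that $\brk{r-t} \sim \brk{t}^{1/2}$ (equivalently $\brk{t}\sim\brk{p}^{2}$). This intermediate choice balances the two error terms in \eqref{dy:est:prop3.3:timelike.2} and converts all $t^{C\eps}$ factors into $\brk{p}^{2C\eps}$, which is then absorbed into the $\brk{p}^{C\eps}$ in the conclusion after relabeling the constant. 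Note that this is legitimate precisely because $A_I$ and $B_I$ depend only on $(p,\omg)$, so one is free to pick the comparison point on the ray $\set{r-t = p}$ arbitrarily. The $T\to\infty$ limit is only ever taken for the \emph{linear} solution $v$ (Lemma~\ref{lem:B-vf}), where there is no $t^{C\eps}$ obstruction, and the resulting error bound $\eps\brk{t}^{-1+C\eps}$ is then itself evaluated at the finite comparison point. To repair your proposal you would need to replace the $T\to\infty$ limits for $u$-quantities with this finite matching argument.
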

We prove (a stronger version of) this result in Section~\ref{sec:rad-field}; see Proposition~\ref{prop:rad-field}.

\begin{remark}
The radiation field $B(p,\omg;\ep)$ can, in principle, be explicitly computed using \eqref{eq:v.def} and ideas in \cite[Proposition~6.11]{LO}; this would give $B$ as an integral of a nonlinear expression of $\wh{A}$. The typical decay expected for $\ep B(p, \omg;\eps)$ is $O(\eps^{2} \brk{p}^{-1 + C\eps})$. If we choose $\eta$ small enough, the error term for the difference $\wh{A}(p, \omg; \eps) - B(p, \omg;\eps)$ decays faster for $p$ large enough (depending on $\eps$) in comparison to this behavior. Since $B$ itself is defined in terms of an integral of a nonlinear expression of $\wh{A}$, Corollary~\ref{cor:rad-field} constitutes a nontrivial compatibility condition on the radiation field $\wh{A}$ arising from a solution to \eqref{eq:qnlw} with small, smooth, and localized initial data \eqref{eq:qnlw-id}. Moreover, this implies that the asymptotics of $\wh{A}$ exhibits a continuous superposition of decay rates. See also Remark~\ref{rem:rad-field-rigidity} below.
\end{remark}

\begin{remark}
From the radiation field $B$, we obtain 
\begin{align*}
    v(t,x)&=\frac{\eps}{2\pi}\int_{\mathbb{S}^2}B(x\cdot\theta-t,\theta;\eps) \ud \mathring{\slashed{\sigma}}(\theta);
\end{align*}
cf.~\eqref{eq:w.def}. We prove (a stronger version of) this formula in Section \ref{sec:rad-field}; see Lemma \ref{sec:rad-field:lem:dy:ZIvexpress}.
\end{remark}

By analyzing the behavior of the comparison solution $v$, we can obtain rigidity results on the decay rate, which show that if a solution decays slightly faster than what one expects, then it is the zero solution. Since decomposition into spherical harmonics enters into the statements of our results, before we proceed, we need to introduce relevant notations:
\begin{definition}\label{def:spherical.harmonics}
Let $\ell \in \mathbb Z_{\geq 0}$. Given a function $f$, denote $\bbS_{(\ell)}f$ as the projection to the spherical harmonics of degree $\ell$, i.e., the eigenspace of $\mathring{\slashed{\Delta}}$ with eigenvalue $-\ell(\ell+1)$. Denote also $\bbS_{(\geq \ell)} = I - \sum_{\ell'=0}^{\ell-1} \bbS_{(\ell')}$, $\bbS_{(\leq \ell)} = \sum_{\ell' = 0}^\ell \bbS_{(\ell')}$.
\end{definition}
We give two versions of our rigidity result. The first version assumes decay in a region where $|x|\sim t$:
\begin{corollary}\label{cor:main.1}
    Let $u$ be a future-global-in-time solution to \eqref{eq:qnlw} arising from data in \eqref{eq:qnlw-id}. Suppose $G$ in \eqref{eq:G.intro.def} is not identically vanishing. Then there exist $C_0>0$ and $\ep_0>0$ (both depending on $u_0$, $u_1$ and $g^{\alp\bt}$) such that if $\ep \in (0,\ep_0)$ and $u$ satisfies the decay bound
    \begin{align}
        |\bbS_{(\leq 2)} u|(t,x) \leq &\: C_1 \brk{t}^{-1-C_0 \ep}, \quad \forall t\geq 0,\forall |x| \in [c_1 t, c_2 t], \label{eq:main.cor.assumption}
    \end{align}
    for some $C_1 >0$ and $0<c_1<c_2<1$, then $u \equiv 0$.
\end{corollary}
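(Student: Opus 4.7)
The plan is to use Theorem~\ref{thm:main} together with Corollary~\ref{cor:rad-field} to transfer the hypothesized decay of $u$ in the wave zone into decay of the reparametrized radiation field $\wh{A}$, and then to invoke an existing rigidity result from \cite{yu2024timelike}. First, in the wave zone $r \in [c_{1} t, c_{2} t]$ one has $\brk{t - r} \sim t \gg \eps^{-2}$ for large $t$, so Theorem~\ref{thm:main} applied with $N = 0$ and $\eta$ small gives $|u - v|(t, x) \ls \eps \eta^{-2} \brk{t}^{C \eps - 3/2 + \eta/2}$, which is strictly faster than $\brk{t}^{-1 - C_{0} \eps}$ for an appropriate choice of $C_{0}$ and $\eps_{0}$. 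Combined with \eqref{eq:main.cor.assumption}, this produces the parallel bound $|\bbS_{(\leq 2)} v|(t, x) \ls \brk{t}^{-1 - C_{0} \eps / 2}$ in the same region.

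Second, since $v$ solves the linear wave equation $\Box_{\bfm} v = 0$, the standard wave-zone asymptotic expansion
\[
v(t, r \omg) = r^{-1} B(r - t, \omg) + (\hbox{lower order remainder})
\]
is valid, where $B$ is the radiation field of $v$ from Corollary~\ref{cor:rad-field}. Using $r \sim |r - t| \sim t$ in the wave zone, the decay of $\bbS_{(\leq 2)} v$ extracts $|\bbS_{(\leq 2)} B(q, \omg)| \ls \brk{q}^{-C_{0} \eps / 2}$ as $q \to -\infty$. Corollary~\ref{cor:rad-field} bounds $|\wh{A} - B|$ by $\eta^{-2} \eps^{-1/2 - \eta/2} \brk{p}^{-3/2 + \eta/2 + C \eps}$, which is much faster than $\brk{p}^{-C_{0} \eps / 2}$ for $\eta$ and $\eps_{0}$ small, and combining yields $|\bbS_{(\leq 2)} \wh{A}(p, \omg)| \ls \brk{p}^{-C_{0} \eps / 2}$ as $p \to -\infty$. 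A suitable rigidity statement from \cite{yu2024timelike} -- possibly after reformulation via Corollary~\ref{cor:finite} as faster-than-generic decay of $u$ in a bounded region -- then forces $u \equiv 0$ using $G \not\equiv 0$.

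The main obstacle is the second step: extracting a sharp enough wave-zone asymptotic expansion of $v$ at the level of low angular modes. The characteristic data $\frkL(t, \omg)$ from \eqref{eq:frkL.def} is only power-law decaying in $t$ (rather than compactly supported), so the classical expansion $v \sim r^{-1} B(r - t, \omg)$ is borderline for the spherical projections $\bbS_{(\leq 2)}$. Higher-order corrections to this expansion, or a direct analysis of the characteristic IVP \eqref{eq:v.def} tailored to the growth rate of $\frkL$, are likely needed to make the remainder genuinely smaller than $\brk{t}^{-1 - C_{0} \eps/2}$ on $\bbS_{(\leq 2)}$. A secondary concern is verifying that the rigidity result from \cite{yu2024timelike} can indeed be triggered at the decay rate $\brk{p}^{-C_{0}\eps/2}$ for $\bbS_{(\leq 2)} \wh{A}$, which may require going through Corollary~\ref{cor:finite} and a careful spherical-harmonic analysis of the resulting integral.
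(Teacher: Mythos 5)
Your proposal takes a genuinely different route from the paper, and the gaps you flag are not merely technical worries --- they are fatal to the approach as sketched. The paper does not attempt to extract pointwise decay of $\wh{A}$ from the hypothesis, and it does not invoke the rigidity result of \cite{yu2024timelike}. Instead, the core ingredient is Proposition~\ref{prop:A.in.terms.of.v}: a representation formula expressing spherical integrals of the nonnegative quantity $G^{2}(\omg') \frkA^{2}(p,\omg')$ (with $\frkA$ as in \eqref{eq:frkA.def}) in terms of specific combinations of $(\rd_t+\rd_r)$-derivatives of $v_{(0)}$, $v_{(1),i}$, $v_{(2),i}$ that are conserved along outgoing null rays by \eqref{eq:conservation.0.mode}--\eqref{eq:conservation.2.mode}. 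The decay hypothesis on $\bbS_{(\leq 2)} u$, combined with Theorem~\ref{thm:main} (to pass from $u$ to $v$) and the interpolation Lemma~\ref{lem:Rudin} (to upgrade decay of $v$ itself to decay of its $(\rd_t+\rd_r)$-derivatives in the region $c_1 t \leq r \leq c_2 t$), shows that these conserved quantities, hence the spherical integrals of $G^2\frkA^2$, decay slightly faster than their generic rate. Since the integrand is nonnegative, this forces $\frkA \equiv 0$, hence $A\equiv 0$, hence $u\equiv 0$.

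Concretely, the chief obstructions to your route are the following. First, the decay you would extract for $\bbS_{(\leq 2)} B$ from the wave-zone expansion is quantitatively useless: the hypothesis $|\bbS_{(\leq 2)} v|(t,r\omg) \ls \brk{t}^{-1-C_0\eps/2}$ in $r\sim t$, fed into $v\sim r^{-1}B(r-t,\omg)$ with $r\sim|r-t|\sim t$, yields $|\bbS_{(\leq 2)}B(q,\omg)|\ls \brk{q}^{-C_0\eps/2}$, which is far \emph{weaker} than the $\brk{q}^{-1+C\eps}$ bound already known from \eqref{dy:est:hatA} and Corollary~\ref{cor:rad-field}. (The source of the loss is that $B$ is a limit of $r(\rd_t-\rd_r)v$, essentially a $\rd_q$-derivative of the Friedlander profile of $v$, and pointwise decay of $v$ does not propagate to pointwise decay of this derivative without an argument.) Second, the rigidity result of \cite[Theorem 2.c)]{yu2024timelike} requires the much stronger rate $\brk{p}^{-1-C_0\eps}$, and it requires it for \emph{all} spherical modes of $\wh{A}$; the strengthened version for $\bbS_{(\leq 2)}\wh{A}$ alone is only anticipated in Remark~\ref{rem:rad-field-rigidity} and --- per that remark --- its proof would invoke ``arguments similar to those in Section~\ref{sec:rigidity}'', i.e., precisely the argument used to prove Corollary~\ref{cor:main.1}. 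Routing through that statement is therefore circular. The deeper point you may wish to internalize is that the paper's innovation is to circumvent the need for any pointwise decay of $\wh{A}$ by working with the conserved integral quantities in Proposition~\ref{prop:A.in.terms.of.v}, so that the final contradiction is between the positivity of $G^2\frkA^2$ and its anomalous decay.
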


\begin{remark}
    Notice in particular that the assumption is only made on up to the second spherical harmonic of the solution, but the conclusion holds for the full solution $u$. The same comment applies to Corollary~\ref{cor:main.2} below.
\end{remark}

The second version of our rigidity result on the decay rate assumes decay in a finite $|x|$ region.
\begin{corollary}\label{cor:main.2}
    Let $u$ be a future-global-in-time solution to \eqref{eq:qnlw} arising from data in \eqref{eq:qnlw-id}. Suppose $G$ in \eqref{eq:G.intro.def} is not identically vanishing. Then there exist $C_0>0$ and $\ep_0>0$ (both depending on $u_0$, $u_1$ and $g^{\alp\bt}$) such that if $\ep \in (0,\ep_0)$ and $u$ satisfies the decay bound
    \begin{align}
        |\bbS_{(0)} u|(t,x) \leq &\: C_1 \brk{t}^{-1-C_0 \ep}, \quad \forall t\geq 0,\forall |x|\leq R,\label{eq:main.cor.2.assumption.0}\\
        |\bbS_{(1)} u|(t,x) \leq &\: C_1 \brk{t}^{-2-C_0 \ep}, \quad \forall t\geq 0,\forall |x|\leq R, \label{eq:main.cor.2.assumption.1}\\
        |\bbS_{(2)} u|(t,x) \leq &\: C_1 \brk{t}^{-3-C_0 \ep}, \quad \forall t\geq 0,\forall |x|\leq R,\label{eq:main.cor.2.assumption.2}
    \end{align}
    for some $C_1 >0$ and $R>0$, then $u \equiv 0$.
\end{corollary}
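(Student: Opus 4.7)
The plan is to combine Theorem~\ref{thm:main} with a multipole expansion of the comparison solution $v$ in the region $\{|x| \leq R\}$ to deduce that the reparametrized radiation field $\wh A$ must vanish; scattering rigidity from~\cite{yu2024timelike} (vanishing $\wh A$ for small data implies $u \equiv 0$) then concludes the argument.

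For the multipole expansion, I would begin with the representation
\begin{equation*}
    v(t,x) = \frac{\ep}{2\pi}\int_{\bbS^2} B(x\cdot\theta - t, \theta; \ep)\, \ud\mathring{\slashed{\sigma}}(\theta)
\end{equation*}
noted after Corollary~\ref{cor:rad-field}, Taylor-expand $B(x\cdot\theta - t, \theta; \ep)$ in powers of $x$, and apply the Funk--Hecke identity for $P_\ell(\omg\cdot\theta)$ to obtain, for each $\ell \in \{0, 1, 2\}$, an asymptotic formula
\begin{equation*}
    \bbS_{(\ell)} v(t, r\omg) = r^\ell\, \Psi_\ell(t, \omg; \ep) + O\!\left(r^{\ell + 2}\, t^{-\ell - 3 + C\ep}\right),
\end{equation*}
where $\Psi_\ell(t, \omg; \ep)$ is an explicit $\ell$-th angular moment of $\rd_p^\ell B(-t, \cdot; \ep)$ of generic size $t^{-1-\ell+C\ep}$. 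Via the characteristic IVP~\eqref{eq:v.def} and formula~\eqref{eq:frkL.def-Ahat}, $\Psi_\ell$ is in turn a nonlinear functional of $\wh A$. Together with a suitably sharp multipole-level comparison $|\bbS_{(\ell)}(u-v)|(t, x) \ll \brk{t}^{-1-\ell-C_0\ep}$ on $\{|x|\leq R\}$, the hypotheses~\eqref{eq:main.cor.2.assumption.0}--\eqref{eq:main.cor.2.assumption.2} will force $\Psi_\ell(t, \omg; \ep) \equiv 0$ for all $\ell \in \{0,1,2\}$.

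Reformulating this vanishing via~\eqref{eq:frkL.def-Ahat}, for each $\ell \in \{0,1,2\}$ I obtain
\begin{equation*}
    \bbS_{(\ell)}\!\left[\int_{-\infty}^\infty G(\cdot)\,\wh A^2(p,\cdot;\ep)\, \exp\!\left(\tfrac{1}{2}\, G(\cdot)\,\wh A(p,\cdot;\ep)\, s\right) \ud p\right]\!(\omg) \equiv 0 \quad \text{for all } s \in \bbR,
\end{equation*}
where $s := \ep\ln t - \dlt$ ranges freely over an unbounded interval. By analyticity in $s$ and repeated differentiation at $s = 0$, this produces the infinite family of moment identities $\bbS_{(\leq 2)}\!\int_\bbR G^{n+1}(\omg)\, \wh A^{n+2}(p, \omg; \ep)\, \ud p = 0$ for every $n \geq 0$. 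Combined with $G \not\equiv 0$, these should force $\wh A$ to vanish on a nonempty open subset of $\bbR \times \bbS^2$; the null transport structure satisfied by $\wh A$ along asymptotic null generators --- developed in the earlier sections of the paper --- then extends the vanishing to $\wh A \equiv 0$ globally, and scattering rigidity from~\cite{yu2024timelike} completes the argument.

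The main obstacles are twofold. First, sharpening the multipole-level comparison $|\bbS_{(\ell)}(u-v)|$ to beat the hypothesized rate $\brk{t}^{-1-\ell-C_0\ep}$ for $\ell \in \{1,2\}$: the raw error from Theorem~\ref{thm:main} at finite $|x|$ is only $\brk{t}^{-3/2 + \eta/2 + C\ep}$, which is insufficient for $\ell \in \{1,2\}$ and thus requires either incorporating extra angular and scaling commutators $\Omg_{ij}$ and $S$ into the estimate, or, equivalently, refining the sub-leading terms in the finite-$|x|$ asymptotic expansion of $u$ itself. Second, promoting vanishing of only three spherical harmonic components of $\frkL$ to global vanishing of $\wh A$: the key new input beyond the approach of~\cite{yu2024timelike} is the nonlinear exponential weight in~\eqref{eq:frkL.def-Ahat}, which allows three scalar angular conditions to generate infinitely many independent constraints through differentiation in the free spectral variable $s$, and hence to pin down every moment $\int G^{n+1}\wh A^{n+2} \ud p$ rather than just three linear moments.
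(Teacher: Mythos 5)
There is a genuine gap at the step where you claim that the hypotheses force $\Psi_\ell(t,\omg;\ep)\equiv 0$. The decay assumption $|\bbS_{(\ell)} u|\lesssim \brk t^{-1-\ell-C_0\ep}$, together with a sufficiently sharp comparison $|\bbS_{(\ell)}(u-v)|\ll\brk t^{-1-\ell-C_0\ep}$ on $\{|x|\leq R\}$, only yields the estimate $|\Psi_\ell(t,\omg;\ep)|\lesssim \brk t^{-1-\ell-C_0\ep}$ for each $t$; it does not give exact vanishing. Since the generic decay rate of $\Psi_\ell$ is $t^{-1-\ell+C\ep}$, this is a \emph{gain} of order $t^{-(C_0+C)\ep}$, but it is still decay, not identity. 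Your subsequent argument by analyticity in $s$ and differentiation at $s=0$ genuinely requires $\Psi_\ell\equiv 0$ for all $s$; it does not tolerate a remainder. The paper circumvents this precisely by \emph{not} trying to show that the angular moments vanish: it shows (Lemmas~\ref{lem:finite.r.0}--\ref{lem:finite.r.1.2}) that the right-hand sides of \eqref{eq:A.in.terms.of.v.0}--\eqref{eq:A.in.terms.of.v.2} decay like $\brk t^{-1-(2^{-3}C_0-C_*)\ep}$ and then, because $G$ is a polynomial of degree $\leq 2$, tests the left-hand side against $p(\omg')=G(\omg')$. This produces the manifestly nonnegative integrand $G^2\frkA^2\geq 0$ and a coercivity argument (Step~3 of Corollary~\ref{cor:main.1}): with $C_0$ chosen so that $\tfrac{C_0}{8}-C_*+GA\geq 0$, the weighted integral is bounded below by a positive constant if $\frkA\not\equiv 0$, contradicting the decay to zero. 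Notice that the coercivity step in the paper is exactly the $n=0$ case of your moment-differentiation idea (testing against $G$), but done at the level of a decaying, rather than vanishing, quantity.

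Two further technical points you flag as ``obstacles'' but do not resolve. First, the raw error of Theorem~\ref{thm:main} at finite $|x|$, $\brk t^{-3/2+\eta/2+C\ep}$, is indeed insufficient to close the comparison at the $\ell=1,2$ levels; the paper's mechanism is not ``extra commutators'' but the weighted elliptic estimates of Lemma~\ref{lem:elliptic.u-v} (combined with the vector-field bound of Proposition~\ref{prop:main.error}), which trade $r$-weights for $t$-decay and give $|\bbS_{(\geq\ell)}(u-v)|\lesssim r^{\ell-\eta/2}\brk t^{-3/2-\ell+\eta+C\ep}$ for $\ell=1,2$ in $\{r\leq t/2\}$. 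Second, the representation formulas \eqref{eq:A.in.terms.of.v.0}--\eqref{eq:A.in.terms.of.v.2} involve up to three $(\rd_t+\rd_r)$-derivatives of $r^\ell v_{(\ell),i}$, and converting the assumed pointwise decay of $u_{(\ell)}$ into decay of these derivatives requires the interpolation inequality of Lemma~\ref{lem:Rudin} applied in both $t$ and $r$ (Step~1 of Lemma~\ref{lem:finite.r.0}); your Funk--Hecke/Taylor-expansion route would face the same issue for the error terms of the expansion but does not address it. If you wanted to salvage the Funk--Hecke route, you would still need to replace ``$\Psi_\ell\equiv 0$'' by ``$\Psi_\ell$ decays like $\brk t^{-1-\ell-c\ep}$'', propagate the derivative bounds with Lemma~\ref{lem:Rudin}-type interpolation, and close via the coercive test function $p=G$, at which point you would essentially have reproduced the paper's proof.
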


\begin{remark}\label{rmk:cor.compared}
    Both Corollaries~\ref{cor:main.1} and~\ref{cor:main.2} have their analogues in the work of the third author \cite{yu2024timelike}:
    \begin{itemize}
        \item In the case of Corollary~\ref{cor:main.1}, a similar result holds if the decay assumption is made (though for all spherical harmonics) in $|x| \in [t-2t^{\nu_0}, t- \f 12 t^{\nu_0}]$, $0 < \nu_0 < 1$, instead of $|x|\in [c_1 t, c_2 t]$. Notice that $|x| \in [t-2t^{\nu_0}, t- \f 12 t^{\nu_0}]$ is a region that is much closer to the wave zone $|x| = t$.
        \item In the case of Corollary~\ref{cor:main.2}, a similar result holds if the solution $u$ is a priori \emph{assumed} to be spherically symmetric\footnote{For the equation $-\partial_{tt}^2 u + a^2(u) \Delta u=0$, it is easy to check that spherically symmetric data lead to spherically symmetric solutions. In general, however, spherical symmetry needs not be propagated, and there may not be non-trivial solutions which are spherically symmetric.}.
    \end{itemize}

    In both cases, the results in \cite{yu2024timelike} apply in a regime where the angular Laplacian term in the wave equation can be considered perturbative. Our new representation formula allows us to analyze the solution when the angular Laplacian term is no longer perturbative.
\end{remark}

In the special case where $G(\omg)$ is bounded away from $0$, then in fact controlling that spherically symmetric mode would suffice. More precisely, we have
\begin{corollary}\label{cor:special.case}
    Suppose $G$ in \eqref{eq:G.intro.def} satisfies $\inf_{\omg \in \mathbb S^2} G(\omg) >0$ or $\sup_{\omg \in \mathbb S^2} G(\omg) <0$. Then the following improvements to Corollary~\ref{cor:main.1} and Corollary~\ref{cor:main.2} hold:
    \begin{enumerate}
        \item In Corollary~\ref{cor:main.1}, the assumption \eqref{eq:main.cor.assumption} can be replaced by the weaker assumption:
        \begin{align}
        |\bbS_{(0)} u|(t,x) \leq &\: C_1 \brk{t}^{-1-C_0 \ep}, \quad \forall t\geq 0,\forall |x| \in [c_1 t, c_2 t], \label{eq:main.cor.assumption.improved}
    \end{align}
        \item In Corollary~\ref{cor:main.2}, the assumptions \eqref{eq:main.cor.2.assumption.0}--\eqref{eq:main.cor.2.assumption.2} can be replaced by only requiring \eqref{eq:main.cor.2.assumption.0}.
    \end{enumerate}
\end{corollary}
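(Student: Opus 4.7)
The plan is to revisit the proofs of Corollaries~\ref{cor:main.1} and~\ref{cor:main.2}, and observe that the sign condition on $G$ reduces the information needed to pin down the radiation field $A$: with the full sign of the integrand of the $\bbS_{(0)}$ asymptotic formula fixed, vanishing of that mode alone will already force $A \equiv 0$, whereas the original proofs needed several modes to rule out cancellations. Without loss of generality I treat the case $\inf_{\omg} G(\omg) > 0$, the other case being symmetric.

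For case (2), I would apply Corollary~\ref{cor:finite} at any fixed $x$ with $|x| \leq R$: the leading term on the right-hand side is $x$-independent and therefore lives entirely in $\bbS_{(0)} u$. Combining with the decay assumption \eqref{eq:main.cor.2.assumption.0} and absorbing the $O(t^{-3/2+C\eps+\eta})$ error yields, for all large $t$,
\begin{equation*}
    |\mathcal{I}(t)| \lesssim \eps^{-2} \brk{t}^{-C_0 \eps}, \qquad \mathcal{I}(t) := \int_{\bbR} \int_{\bbS^2} G(\omg) A^2(p,\omg;\eps) \exp\!\bigl(\tfrac{1}{2} G(\omg) A(p,\omg;\eps) \eps \ln(t/2)\bigr) J(p,\omg;\eps) \, \ud \rssgm(\omg) \, \ud p.
\end{equation*}
Under $\inf G > 0$, and since $J > 0$ for small $\eps$ (as $A_{1}(q,\omg;\eps) \approx -2$ so $J \approx 1$), the integrand of $\mathcal{I}(t)$ is pointwise non-negative. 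I would then show by a Laplace/Watson-type analysis that $\mathcal{I}(t)$ cannot decay at the rate $\brk{t}^{-C_0 \eps}$ unless $A \equiv 0$: if $A$ takes positive values on a set of positive measure, the exponential factor grows as $t^{c\eps}$ on that set for some $c > 0$, and by lower-semicontinuity $\mathcal{I}(t) \gtrsim t^{c\eps/2}$; if instead $A \leq 0$ everywhere but $A \not\equiv 0$, a neighborhood of $\{A = 0\}$ contributes a polylogarithmic decay (via $\int (p - p_{0})^{2} e^{c(p-p_{0})\eps \ln t} \, \ud p \sim (\eps \ln t)^{-3}$), which still defeats $t^{-C_0 \eps}$ for any fixed $C_{0} > 0$ as $t \to \infty$. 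Choosing $C_{0}$ larger than $\tfrac{1}{2} \|G\|_{\infty} \|A\|_{\infty}$ (both uniformly bounded in small $\eps$) forces $A \equiv 0$.

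Once $A \equiv 0$, the data $\frkL$ in Definition~\ref{def:comparison} vanishes, hence $v \equiv 0$; Theorem~\ref{thm:main} then gives $|u(t,x)| \lesssim \eps \brk{t}^{C\eps} \brk{t-r}^{-3/2+\eta/2}$ in $\{t - r \geq \eps^{-2}\}$, and combining with the vanishing $\wh{A} \equiv 0$ the scattering rigidity of \cite{yu2024timelike} forces $u \equiv 0$. For case (1), the same Steps~2 and~3 apply once an analogous sign-definite integral representation for the $\bbS_{(0)}$ component of the leading-order asymptotic of $u$ has been established in the region $|x| \in [c_{1} t, c_{2} t]$; this can be obtained by decomposing $v$ via the integral representation of the unique solution to \eqref{eq:v.def} and isolating the $\bbS_{(0)}$ part.

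The main obstacle is precisely the last point: extracting a usable sign-definite integral representation for $\bbS_{(0)} u$ in the wave-zone-adjacent regime $|x| \in [c_{1} t, c_{2} t]$, where the angular Laplacian is no longer perturbative and the asymptotics of $v$ must be drawn from its characteristic initial value problem rather than from the finite-$|x|$ formula of Corollary~\ref{cor:finite}. By contrast, the Laplace/Watson argument in Step~2 and the scattering rigidity in Step~3 are largely routine given the apparatus already developed in this paper and in \cite{yu2024timelike}.
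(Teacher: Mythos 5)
Your proposal misses the key observation that makes this a short corollary: the proofs of Corollaries~\ref{cor:main.1} and~\ref{cor:main.2} already contain all the required apparatus, and the only new input under the sign hypothesis on $G$ is to replace the positivity of $G^2$ by the positivity of $G$ (or $-G$). Concretely, controlling only $\bbS_{(0)} u$ still yields, via \eqref{eq:A.in.terms.of.v.0} (in the $|x|\sim t$ region) or Lemma~\ref{lem:finite.r.0} (in the finite-$r$ region), the estimate \eqref{eq:frkA.extra.decay.p} with the test polynomial $p\equiv 1$; after the same rearrangement as in the original proof one gets the analogue of \eqref{eq:main.cor.almost} with $G(\omg')$ in place of $G^2(\omg')$. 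The contradiction in Step~3 only used that $G^2\ge 0$; since now $G$ (or $-G$) is bounded away from zero on all of $\bbS^2$, the same open-set argument applies verbatim. That is the whole proof.

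By contrast, what you propose is a different and incomplete route. For case~(2) you go through Corollary~\ref{cor:finite} and a Laplace/Watson analysis of $\mathcal I(t)$. Two issues. First, you acknowledge case~(1) as an open obstacle --- namely a sign-definite integral representation for $\bbS_{(0)} v$ at $|x|\in[c_1 t,c_2 t]$ --- but Proposition~\ref{prop:A.in.terms.of.v} (identity \eqref{eq:A.in.terms.of.v.0}) already provides exactly this, everywhere inside the light cone with $r\le t-20$, and it is used directly in the paper's proof. Second, your Laplace estimate $\int (p-p_0)^2 e^{c(p-p_0)\eps\ln t}\,\ud p \sim (\eps\ln t)^{-3}$ implicitly assumes a first-order zero of $A$ at $p_0$, but if $A\leq 0$ everywhere then $A$ cannot have a first-order zero (it would have to change sign), so the local analysis near $\{A=0\}$ is degenerate and your stated asymptotic is inapplicable. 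One can salvage case~(2) by the cruder observation that if $A\not\equiv 0$ there is an open set $\calU$ on which $|A|\ge c>0$, whence $\mathcal I(t)\gtrsim t^{-c'\eps}$ with $c'\le \tfrac12\|G\|_\infty\|A\|_\infty$ uniformly bounded in $\eps$, and then $C_0$ is chosen to beat this polynomial decay rather than a polylogarithm; but this is precisely the comparison the paper already builds into the exponent rearrangement via \eqref{eq:choose.C0}, without any Laplace analysis. You should have recognized that the coercive representations proved in Section~\ref{sec:rigidity} are doing the heavy lifting, and the new hypothesis on $G$ enters only through a one-line positivity argument.
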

\begin{remark}
    Note that Corollary~\ref{cor:special.case} in particular applies to the class of equations $-\rd^2_{tt}u + a^2(u) \Delta u = 0$, $a(0) = 1$, $a'(0) \neq 0$ considered in \cite{MR2003417, MR1177476}.
\end{remark}

\begin{remark} \label{rem:rad-field-rigidity}
In \cite[Theorem~2.c)]{yu2024timelike}, the following rigidity result in terms of $\wh{A}$ was shown: {\it There exists $C_{0} > 0$ and $\eps_{0} > 0$ (both depending on $u_{0}$, $u_{1}$, and $g^{\alp \bt}$) such that if $\eps \in (0, \eps_{0})$ and, for some $C_{1} > 0$, $\wh{A}$ satisfies the decay bound $\abs{\wh{A}}(p, \omg) \leq C_{1} \brk{p}^{-1-C_{0} \eps}$ for all $p \geq 0$ and $\omg \in \bbS^{2}$, then the solution $u$ to \eqref{eq:qnlw} must be trivial.} This rigidity is closely related to the nontrivial compatibility condition on $\wh{A}$ imposed by Corollary~\ref{cor:rad-field}. Indeed, using Corollary~\ref{cor:rad-field} and arguments similar to those in Section~\ref{sec:rigidity} below, it should be possible to prove the same rigidity result with the pointwise upper bound weakened to that for $\abs{\bbS_{(\leq 2)} \wh{A}}$ in general, and for $\abs{\bbS_{(0)} \wh{A}}$ if $\inf_{\omg \in \bbS^{2}} G(\omg) > 0$.
\end{remark}

\begin{remark}
    Given the representation formula, it seems reasonable to conjecture for more general equations (not covered by Corollary~\ref{cor:special.case}) that there exist (necessarily non-spherically symmetric) non-trivial solutions which satisfy
    $$|u|(t,x) \leq C_1 \brk{t}^{-1-C_0 \ep}, \quad \forall t\geq 0,\forall |x|\leq R.$$
    Perhaps even stronger $\ep$-independent decay rates (e.g., $\brk{t}^{-\f 54}$) may hold. However, such examples are not constructed in this paper.
\end{remark}

\subsection{Related works}
\subsubsection{Weak null condition} 
According to the classical work of John \cite{Joh1, Joh2}, solutions to a quasilinear wave equation on $\bbR^{1+3}$ may blow up in finite time, no matter how small the initial data are (e.g., of the form \eqref{eq:qnlw-id} with $\eps$ arbitrarily small). Christodoulou \cite{Chr} and Klainerman \cite{Kla} identified a specific cancellation condition for the quadratic nonlinearity --- called the \emph{null condition} --- that nevertheless guarantees the global existence of solutions for small localized initial data, and is satisfied by many important equations from physics. The \emph{weak null condition} is a relaxation of the classical null condition, which in many cases still ensures global existence in the same regime. This condition was first clearly stated in the influential work of Lindblad and Rodnianski \cite{MR1994592}. More specifically, it identified the weak null condition for the Einstein (vacuum or coupled to scalar field) equation in wave coordinates, which was used in a crucial way to give a new proof of the nonlinear stability of the Minkowski spacetime \cite{LinRod} (first proved in the monumental work of Christodoulou and Klainerman \cite{ChrKla}). The precise formulation of the weak null condition requires the discussion of H\"ormander's asymptotic system, which we will not get into; see \cite{MR1994592}.

Quasilinear wave equations of the form \eqref{eq:qnlw} are prototypical scalar (i.e., non-system) examples where the classical null condition fails, but the weak null condition holds. Lindblad's seminal paper \cite{MR1177476} first addressed the global existence problem for the simpler equation $-\rd^2_{tt}u + a^2(u) \Delta u = 0$ in radial symmetry. Alinhac \cite{MR2003417} extended this to general non-symmetric initial data, and Lindblad \cite{MR2382144} further generalized the result to the broader class of equations of the form \eqref{eq:qnlw} considered in the present paper.

For general systems of quasilinear wave equations satisfying the weak null condition, the global existence of solutions with small, smooth, and localized initial data is still largely open. We refer to the work of Keir \cite{Keir,Keir.example} for significant progress and a comprehensive discussion in this direction, as well as to the paper of Kadar \cite{Kad} for surprising subtleties (involving asymptotics towards time-like infinity) in the proper formulation of the global existence conjecture.

Returning to equations of the form \eqref{eq:qnlw}, subsequent work has focused on their scattering theory and global-in-time asymptotics. Deng and Pusateri \cite{MR4078713} used the original H\"ormander asymptotic system to establish asymptotics near the null cone $\set{t = r}$ (in particular, towards null infinity), and established partial scattering results. The third author introduced a geometric modification of H\"ormander's asymptotic system --- called the \emph{geometric reduced system} --- and gave a precise definition of the radiation field $\widehat{A}$. This led to the establishment of the existence of a modified wave operator \cite{Yu1}(i.e., associating a solution $u$ to a given radiation field $\widehat{A}$), and asymptotic completeness \cite{MR4772266} (i.e., associating a radiation field $\widehat{A}$ to a given global solution $u$). Moreover,  asymptotics towards time-like infinity were studied in \cite{yu2024timelike}, whose results are extended in the present paper (see Remark~\ref{rem:compare-prev-work}). We also note the works of Lindblad \cite{Lin} and Lindblad--Schlue \cite{LinSch1, LinSch2} on these problems for the Einstein equation in wave coordinates (and a semi-linear model problem for this), as well as the upcoming work of Mao \cite{YMao}.

We end this summary by pointing out that these PDEs outside the small localized data regime are largely unexplored. One exception is an interesting result of Speck \cite{MR3694013}, which demonstrates an unusual large-data phenomenon for equations of the form $- \rd_{t}^{2} u + (1+u)^{p} \lap u = 0$ ($p=1, 2$). More specifically, \cite{MR3694013} constructs an open set of initial data leading to degeneration of hyperbolicity (i.e., $(1+u)^p \to 0$) without blow-up in finite time.

\subsubsection{Results on late-time tails} 
A key motivation for studying the precise asymptotics towards time-like infinity (referred to as late-time tails) stems from black hole perturbation theory in general relativity, particularly the need for generic lower bounds on late-time tails to investigate the Strong Cosmic Censorship Conjecture for perturbations of subextremal Kerr or Reissner–Nordstr\"om black holes. We will not get into any further details; instead, we refer the reader to \cite{Daf, DafLuk, LukOh2, LukOh3, VdM} and the references therein.

Interestingly, the question of late-time tails turns out to be especially subtle in odd spatial dimensions, including the physical case where $d = 3$. This subtlety arises from the Strong Huygens Principle, which implies that for any solution to the classical wave equation $\Box_{\bfm} v = 0$ with localized data, $v(t, x)$ eventually vanishes if $x$ is fixed and $t$ goes to $+\infty$; in short, the late-time tail for $\Box_{\bfm} v = 0$ is trivial. Hence, any nontrivial late-time tail must arise from the curved background and/or nonlinearity, which makes even the prediction of the sharp decay rate a challenging task at first.

The first heuristic derivation of the sharp decay rate for wave equations on black hole backgrounds was achieved in Price's seminal work \cite{Price}. Henceforth, the subject of late-time tails has often been referred to as \emph{Price's law}, and it has been extensively studied in both physics and mathematics. We give a concise overview here focusing on the mathematical general relativity literature, and refer readers to \cite{LO} and its references for a more comprehensive discussion.

Rigorous mathematical proofs of Price's law began with upper bounds. Price's law as an upper bound (up to a small loss) was first proved in the spherically symmetric, but nonlinear, context in \cite{DRPL}. Subsequently, sharp upper bounds were proven for the linear wave equation on Schwarzschild spacetimes without symmetry assumptions \cite{DSS1}. Tataru \cite{Ta} extended this to general stationary, asymptotically flat spacetimes, and Metcalfe--Tataru--Tohanuanu \cite{MTT} provided an alternative proof applicable to non-stationary cases.

The first lower bound result consistent with Price's law, specifically for Reissner–Nordstr\"om spacetimes, was established by the first two authors \cite{LukOh1}. While this lower bound was only in an averaged sense on the event horizon, it was nevertheless sufficient for applications to the Strong Cosmic Censorship Conjecture in a spherically symmetric, nonlinear setting \cite{LukOh2, LukOh3}. Recently, generic sharp pointwise asymptotics in this setting were achieved by Gautam \cite{Gau}. Outside of symmetry, sharp pointwise asymptotics for linear stationary problems were independently obtained by Angelopoulos--Aretakis--Gajic \cite{AAGPrice} and Hintz \cite{HintzPriceLaw}.

Notably, the presence of nonlinearity or non-stationarity can significantly alter late-time decay behavior compared to linear stationary cases. This phenomenon, supported by the present paper, is also manifest in the work of the first two authors \cite{LO, LO.part2}, which introduced a general method for computing and establishing late-time tails, rigorously justifying numerous predictions in the physics literature \cite{BR.dynamical,GPPII}. Related studies on late-time tails for nonlinear problems include \cite{sjDsyMyMxyY2022, LooXIo}. 

\subsection{Outline of the paper}
In {\bf Section~\ref{dy:sec:wave}}, we recall the main results of the scattering theory for \eqref{eq:qnlw} developed by the third author in \cite{MR4772266}. These are the key ingredients in our analysis in the \emph{wave zone}, where $\abs{r - t}$ is small comparable to $r$ (in particular, this region extends null infinity). In {\bf Section~\ref{sec:med-near}}, we show that the linear wave $v$ in Definition~\ref{def:comparison} furnishes a good approximation of $u$ in the \emph{near} and \emph{intermediate zones} (where $r \aleq 1$ and $1 \aleq r \aleq \abs{r - t}$, respectively). Furthermore, in {\bf Section~\ref{sec:asymp-v}} and {\bf Section~\ref{sec:rad-field}} we study the precise asymptotics of the linear wave $v$ towards future, establishing, in particular, Corollaries~\ref{cor:finite} and \ref{cor:rad-field}, respectively. These three sections follow the philosophy of \cite{LO}, and extend the analysis in \cite{yu2024timelike}. Finally, the rigidity results (Corollaries~\ref{cor:main.1}, \ref{cor:main.2}, and \ref{cor:special.case}) are proved in {\bf Section~\ref{sec:rigidity}}.

\subsection*{Acknowledgements}
J.~Luk was partially supported by a National Science Foundation Grant under DMS-2304445. S.-J.~Oh was partially supported by a Sloan Research Fellowship, a National Science Foundation CAREER Grant under NSF-DMS-1945615, and a National Science Foundation Grant under DMS-2452760. D.~Yu was supported by a VandyGRAF Fellowship from Vanderbilt University.

\section{Analysis in the wave zone}\label{dy:sec:wave}

\subsection{Convention}
For the remainder of the paper, fix $g^{\alp\bt}$ in \eqref{eq:qnlw} as well as $u_0,u_1\in C_c^\infty(\bbR^3)$. We  will allow all constants to depend on $g^{\alp\bt}$, $u_0$, and $u_1$.

The parameter $\eps$ always denotes the size of initial data in \eqref{eq:qnlw-id}. We say that a statement (involving $\eps$) holds for all $\eps\ll 1$, if there exists a small constant $\eps_0\in(0,1)$ such that the statement holds for all $\eps\in(0,\eps_0)$. We say that a statement holds for all $\eps\ll_\beta 1$ if we hope to emphasize that the choice of $\eps_0$ above depends on the quantity $\beta$ (in addition to $g^{\alp\bt}$, $u_0$, and $u_1$).

We will also say that $a \ls b$ or $a=O(b)$ if there exists $C>0$ such that $a\leq Cb$, and say $a \ls_\beta b$ or $a=O_\beta(b)$ if the constant $C$ depends on $\beta$ (in addition to $g^{\alp\bt}$, $u_0$, and $u_1$).

\subsection{Global existence}
For all $\eps\ll1$ (only depending on $(u_0,u_1)$ and the coefficients $g^{\alpha\beta}$), the Cauchy problem \eqref{eq:qnlw} with initial data \eqref{eq:qnlw-id} has a unique global $C^\infty$ solution $u=u(t,x;\eps)$ for all $t\geq 0$. Sometimes we omit $\eps$ and write $u(t,x)$ for simplicity. Here we first apply the global existence result for \eqref{eq:qnlw} in Lindblad \cite{MR2382144} to get a global $C^{14}$ solution, and then apply \cite[Theorems I.4.1 and I.4.2]{MR2455195} to show that this $C^{14}$ solution is indeed the unique $C^\infty$ solution.

Moreover, for a fixed integer $N\geq 0$, for all $\eps\ll_N1$ we have the following pointwise bounds:
\begin{align}\label{dy:est:ptw:lindblad}
|(Z^{\leq N}u)(t,x;\eps)|\lesssim_N \eps \brki{t}^{-1+C_N\eps},\qquad \forall t\geq 0,\ x\in\bbR^3.
\end{align}
The notation $Z^{\leq N}$ has been defined above Theorem \ref{thm:main}.
Moreover, there is a better bound for $\partial u$: \begin{align*}|\partial u|\lesssim \eps \brki{t}^{-1},\qquad \forall t\geq 0,\ x\in\bbR^3.\end{align*}
Moreover, if we choose a real constant $\radius>0$ such that $\supp(u_0,u_1)\subset\{x\in\bbR^3:\ |x|\leq \radius\}$, then we have $u\equiv 0$ whenever $r-t\geq \radius$  by the finite speed of propagation.

\subsection{The optical function}\label{dy:sec:eik}
Fix a small parameter $\delta\in(0,1)$. Recall that $\radius>0$ is a real constant such that $\supp(u_0,u_1)\subset\{x\in\bbR^3:\ |x|\leq \radius\}$. Define
\begin{align}
\label{dy:defn:Omega}\Omega=\Omega_{\delta,\radius,\eps}:=\{(t,x)\in(e^{\delta/\eps},\infty)\times\bbR^3:\ |x|>\frac{1}{2}(t+e^{\delta/\eps})+2\radius\}.\end{align}
Heuristically, one can view $\Omega$ as $\{t>e^{\delta/\eps},\ |x|>t/2\}$. If $t\leq e^{\delta/\eps}$, we expect that the behaviors of $u$ are similar to those of a solution to the linear wave equation.

In \cite[Proposition 3.1]{MR4772266}, it was proved that the eikonal equation
\begin{align}\label{dy:eqn:eik}g^{\alpha\beta}(u)\partial_\alpha q\partial_\beta q=0\quad\text{in }\Omega;\qquad q=r-t\quad \text{on }\partial\Omega\end{align}
admits a unique global $C^2$ solution $q=q(t,x;\eps)$ in $\Omega$ as long as $\eps\ll1$. Note that $q=r-t$ whenever $r-t\geq \radius$ because $u\equiv 0$ for $r-t\geq \radius$. By \cite[Lemma 3.8]{MR4772266}, we have
\begin{align*}
    |q-(r-t)|+\frac{\brki{r-t}}{\brki{q}}+\frac{\brki{q}}{\brki{r-t}}\lesssim t^{C\eps},\qquad \text{in }\Omega.
\end{align*}
It has also been proved in \cite[Proposition 4.1]{MR4772266} that for each fixed integer $N\geq 2$, the $C^2$ solution $q$ is $C^N$ as long as $\eps\ll_N1$ and that
\begin{align*}
|Z^{\leq N}q|\lesssim_N \brki{q}t^{C_N \eps}\lesssim_N  \brki{r-t}t^{C_N \eps},\qquad \text{in }\Omega.
\end{align*}

Next, we set
\begin{align}\label{dy:defn:Omega'}
\Omega'&:=\{(s,q,\omega)\in[0,\infty)\times\bbR\times\mathbb{S}^2:\ q>\frac{1}{2}(1-e^{s/\eps})e^{\delta/\eps}+2\radius\}.
\end{align}
Then, the map
\begin{align*}\Omega\to\Omega':\ (t,x)\mapsto (s,q,\omega)= (\eps\ln t-\delta,q(t,x;\eps),x/|x|)\end{align*}
is bijective and has an inverse map. As a result, any function defined for all $(t,x)\in\Omega$ induces a function of $(s,q,\omega)\in \Omega'$. Moreover, for each fixed integer $N\geq 2$, the map $\Omega\to\Omega'$ and its inverse are $C^N$ as long as $\eps\ll_N1$.

\subsection{The geometric reduced system}\label{dy:sec:grc}
In $\Omega$, we set $(\mu,U)(t,x;\eps):=(q_t-q_r,\eps^{-1}ru)(t,x;\eps)$ and obtain an induced function $(\mu,U)(s,q,\omega;\eps)$ in $\Omega'$. In \cite[Section 5]{MR4772266}, it was proved that $(\mu,U)$ is an approximate solution to the geometric reduced system for \eqref{eq:qnlw}:
\begin{align}\label{dy:eqn:grc}
\left\{\begin{array}{l}
\displaystyle \partial_s (\mu U_q)=0,\\[1em]
\displaystyle \partial_s\mu=\frac{1}{4}G(\omega)\mu^2U_q.\end{array}\right.
\end{align}
Recall that $G(\omega)$ is defined by \eqref{eq:G.intro.def}.
In addition, for $\eps\ll1$, the following limits exist for all $(q,\omega)\in\bbR\times\bbS^2$:
\begin{align}
\label{dy:defn:AA1A2}\left\{\begin{array}{l}
\displaystyle A(q,\omega;\eps):=-\frac{1}{2}\lim_{s\to\infty}(\mu U_q)(s,q,\omega;\eps),\\
\displaystyle A_1(q,\omega;\eps):=\lim_{s\to\infty}\exp(\frac{1}{2}G(\omega)A(q,\omega;\eps)s)\mu(s,q,\omega;\eps),\\
\displaystyle A_2(q,\omega;\eps):=\lim_{s\to\infty}\exp(-\frac{1}{2}G(\omega)A(q,\omega;\eps)s)U_q(s,q,\omega;\eps).
\end{array}\right.
\end{align}
We have $A_1A_2\equiv -2A$. Since $u\equiv 0$ whenever $r\geq t+\radius$, we have $(A,A_2)\equiv 0$ and $A_1\equiv -2$ for $q\geq \radius$. By setting
\begin{align}\label{dy:defn:tdmuU}
\left\{\begin{array}{l}
\displaystyle\td{\mu}(s,q,\omega;\eps):=A_1\exp(-\frac{1}{2}G(\omega)As),\\[1em]
\displaystyle \td{U}_q(s,q,\omega;\eps):=A_2\exp(\frac{1}{2}G(\omega)As),\\[1em]\displaystyle
\lim_{q\to\infty}\td{U}(s,q,\omega;\eps)=0,
\end{array}\right.
\end{align}
we obtain an exact solution $(\td{\mu},\td{U})$ to the reduced system \eqref{dy:eqn:grc}. We refer to \cite[Proposition 5.1]{MR4772266} for more details. Since $(A,A_1,A_2)$ is defined for all $(q,\omega)\in\bbR\times\mathbb{S}^2$, we emphasize that $(\td{\mu},\td{U})$ is defined for all $(s,q,\omega)\in\bbR\times\bbR\times\mathbb{S}^2$, while $(\mu,U)$ is only defined in~$\Omega'$.

In \cite[Proposition 5.1]{MR4772266}, several bounds for $A,A_1,A_2$ in the coordinate set $(q,\omega)$ were proved. For each fixed integer $N\geq 0$, the functions $A,A_1,A_2$ are $C^N$ as long as $\eps\ll_N1$. Besides, for all $a,c\geq 0$ with $a+c\leq N$, we have
\begin{align}\label{prevac:esta}\partial_q^a\partial_\omega^c(A,A_2)=O_{a,c}(\brki{q}^{-1-a+C_{a,c}\eps}),\quad \partial_q^a\partial_\omega^cA_1=O_{a,c}(\brki{q}^{-a+C_{a,c}\eps}),\qquad \forall (q,\omega)\in \bbR\times\bbS^2.\end{align}
The angular derivative, $\partial_\omega$, is defined as follows. Given a function $f=f(\omega)$ defined on $\mathbb{S}^2$, in order to define angular derivatives $\partial_\omega$, we first extend $f$ to $\bbR^3\setminus 0$ by setting $f(x):=f(x/|x|)$ and then set $\partial_{\omega_i}f:=\partial_{x_i}f\restriction_{\mathbb{S}^2}$.

In \cite[Lemma 5.8]{MR4772266}, it was proved that
\begin{align}
|A_1+2|\leq \brki{q}^{-1+C\eps},\qquad -3\leq A_1\leq -1<0.
\end{align}

Finally, we recall from \cite[Proposition 5.1, (5.11), and (5.12)]{MR4772266} that for each fixed integer $N\geq 0$, in $\Omega'$ we have
\begin{align}
\partial_q^a\partial_s^b\partial_\omega^c(\mu-\td{\mu})&=O_{a,b,c}(\eps^{-b}\brki{q}^{-a}t^{-1+C_{a,b,c}\eps}),\\
\partial_q^a\partial_s^b\partial_\omega^c(U-\td{U})&=O_{a,b,c}(\eps^{-b}\brki{q}^{1-a}t^{-1+C_{a,b,c}\eps}),\\
\partial_q^a\partial_s^b\partial_\omega^c(\td{\mu},\td{U})&=O_{a,b,c}(\eps^{-b}\brki{q}^{-a}t^{C_{a,b,c}\eps})
\end{align}
for all $a,b,c\geq 0$ with $a+b+c\leq N$ as long as $\eps\ll_N1$. Here we set $t=e^{\frac{s+\delta}{\eps}}$ which is equivalent to $s=\eps\ln t-\delta$.

\subsection{Approximation}\label{dy:sec:approximate}
In \cite[Section 7]{MR4772266}, the limits in \eqref{dy:defn:AA1A2} were used to generate an approximate solution to \eqref{eq:qnlw} that is sufficiently close to the exact solution $u$ in some sense. The construction is as follows. Recall from \eqref{dy:defn:tdmuU}  that we have an exact solution $(\td{\mu},\td{U})(s,q,\omega;\eps)$ to the geometric reduced system \eqref{dy:eqn:grc}. Now, we define a new function $\td{q}=\td{q}(t,x;\eps)$ in $\Omega$ by solving
\begin{align*}
\td{q}_t-\td{q}_r=\td{\mu}(\eps\ln t-\delta,\td{q}(t,x;\eps),x/|x|;\eps)\quad\text{in }\Omega;\qquad \td{q}=|x|-t\text{ for }|x|-t\geq 2\radius.
\end{align*}
We then define
\begin{align*}
\td{u}(t,x;\eps)&=\eps |x|^{-1}\td{U}(\eps\ln t-\delta,\td{q}(t,x;\eps),x/|x|;\eps),\qquad \forall (t,x)\in\Omega.
\end{align*}
Once again, both $\td{q}$ and $\td{u}$ are $C^N$ as long as  $\eps\ll_N1$. In \cite[Proposition 7.1 and Lemma 7.7]{MR4772266}, it was proved that this $\td{u}$ is an approximate solution to \eqref{eq:qnlw}, that for each integer $N\geq 0$, as long as $\eps\ll_N1$,
\begin{align}
|Z^{\leq N}\td{u}(t,x;\eps)|\lesssim_N \eps t^{-1+C_N\eps},\qquad \forall (t,x)\in\Omega,
\end{align}
and that for each $\gamma\in(0,1)$ and each integer $N\geq 0$, as long as $\eps\ll_{\gamma,N}1$,
\begin{align}
\label{dy:est:diffutdu}
|Z^{\leq N}(u-\td{u})(t,x;\eps)|\lesssim_{\gamma,N}\eps t^{-2+C_N\eps}\brki{r-t},\qquad \forall (t,x)\in\Omega\cap\{|r-t|\lesssim t^\gamma\}.
\end{align}

Here we recall an alternative way to define $\td{u}$ from \cite[Section 7.2]{MR4772266} which will simplify our computations. Define
\begin{align}
F(q,\omega;\eps):=2\radius-\int_{2\radius}^q \frac{2}{A_1(p,\omega;\eps)}\, \ud p,\qquad\forall (q,\omega)\in\bbR\times\mathbb{S}^2.\end{align}
Recall that $A_1\in[-3,-1]$ everywhere, so $F$ is well-defined. In \cite[Section 7.2]{MR4772266}, it was proved that $\brki{F(q,\omega;\eps)}\sim \brki{q}$ for all $(q,\omega)\in\bbR\times\mathbb{S}^2$.
For each fixed $\omega\in\mathbb{S}^2$, the map $q\mapsto F(q,\omega;\eps)$ has an inverse $\wh{F}(q,\omega;\eps)$. Set
\begin{align}\label{dy:defn:whAmuU}
\left\{\begin{array}{l}
\displaystyle \wh{A}(q,\omega;\eps):=A(\wh{F}(q,\omega;\eps),\omega;\eps),\\
\displaystyle \wh{\mu}(s,q,\omega;\eps):=-2\exp(-\frac{1}{2}G(\omega)\wh{A}(q,\omega;\eps)s),\\
\displaystyle \wh{U}(s,q,\omega;\eps):=-\int_q^\infty \wh{A}(p,\omega;\eps)\exp(\frac{1}{2}G(\omega)\wh{A}(p,\omega;\eps)s) \, \ud p.\end{array}\right.
\end{align}
Note that $\wh{A}$, $\wh{\mu}$, and $\wh{U}$ are also defined for all $(s,q,\omega)\in\bbR\times\bbR\times \bbS$.
This $\wh{A}$ is called the \emph{scattering data} for the asymptotic completeness problem.
We also notice that $(\wh{\mu},\wh{U})$ is another exact solution to the reduced system \eqref{dy:eqn:grc}.

By \cite[Lemma 3.2]{yu2024timelike}, we have the following estimates. Here we fix an integer $N\geq 0$ and assume $\eps\ll_N1$. First, for all $a,c\geq 0$ with $a+c\leq N$, we have
\begin{align}\label{dy:est:hatA}
\partial_q^a\partial_\omega^c\wh{A}=O_{a,c}(\brki{q}^{-1-a+C_{a,c}\eps}),\qquad \forall (q,\omega)\in\bbR\times\bbS^2.
\end{align}
Next, for all $a,b,c\geq 0$ with $a+b+c\leq N$ and all $(s,q,\omega)\in[0,\infty)\times\bbR\times\mathbb{S}^2$, we have
\begin{align}\label{dy:est:muhatUUq}
\partial_q^a\partial_s^b\partial_\omega^c(\wh{\mu}+2)&=O_{a,b,c}(\brki{q}^{-\max\{1,b\}-a+C_{a,b,c}\eps}e^{C_{a,b,c}s});\\
\partial_q^a\partial_s^b \partial_\omega^c\wh{U}_q&=O_{a,b,c}(\brki{q}^{-1-a-b+C_{a,b,c}\eps}e^{C_{a,b,c}s});\\
\partial_s^b \partial_\omega^c\wh{U}&=O_{b,c}((\eps^{-1}\brki{q}^{C_{b,c}\eps}1_{b=0}+1_{b>0})e^{C_{b,c}s}).
\end{align}
Finally, for all $0\leq c\leq N$, $s\geq 0$, $\omega\in \mathbb{S}^2$, and $q>C_0e^{C_0s}(e^{\delta/\eps}-e^{(\delta+s)/\eps}-1)$ for some constant $C_0>1$, we have
\begin{align} \label{dy:est:hatU2}\partial_\omega^c\wh{U}&=O_{N,C_0}(e^{C_{N,C_0}s}).
\end{align}
Note that the bounds for $\wh{\mu}$ in \eqref{dy:est:muhatUUq} are better than those for $\td{\mu}$ defined in \eqref{dy:defn:tdmuU}. For example, we have $\wh{\mu}_q=O(\brki{q}^{-2+C\eps}e^{Cs})$ but $\td{\mu}_q=O(\brki{q}^{-1+C\eps}e^{Cs})$. Such a difference occurs because we have $\partial_q(-2)=0$ while $\partial_qA_1=O(\brki{q}^{-1+C\eps})$. This explains why we introduce this new map~$F$.

It follows from \cite[Lemma 7.2]{MR4772266} that
\begin{align}\wh{q}(t,x;\eps):=F(\td{q}(t,x;\eps),x/|x|;\eps)
\end{align}
solves the transport equation
\begin{align}
\wh{q}_t-\wh{q}_r=\wh{\mu}(\eps\ln t-\delta,\wh{q}(t,x;\eps),x/|x|;\eps)\qquad\text{in }\Omega,
\end{align}
that $\wh{q}=r-t$ for $r-t\geq \radius$, and that
\begin{align}\label{dy:eqn:tduwtu}
\td{u}(t,x;\eps)=\wh{u}(t,x;\eps):=\eps |x|^{-1}\wh{U}(\eps\ln t-\delta,\wh{q}(t,x;\eps),x/|x|;\eps),\qquad\text{in }\Omega.
\end{align}
By \cite[Lemmas 7.4 and 7.7]{MR4772266} and \cite[Lemma 3.4]{yu2024timelike}, in $\Omega$ we also have
\begin{align}
    \brki{r-t}/\brki{\wh{q}}+\brki{\wh{q}}/\brki{r-t}\lesssim t^{C\eps},
\end{align}
and
\begin{align}
|Z^{\leq N}\wh{q}|&\lesssim_N\brki{r-t} t^{C_N\eps},\\
|Z^{\leq N}(\wh{q}-r+t)|&\lesssim_N (1+\ln\brki{r-t}) t^{C_N\eps}
\end{align}
for each fixed integer $N\geq 0$ as long as $\eps\ll_N1$. It follows from \eqref{dy:est:muhatUUq} and \eqref{dy:est:hatU2} that for each integer $N\geq 0$, as long as $\eps\ll_N1$, in $\Omega$ we have
\begin{align}
|Z^{\leq N}\wh{U}|+|Z^{\leq N}(\eps\wh{U}_s)|&\lesssim_N t^{C_N\eps};\\
|Z^{\leq N}\wh{u}|=|Z^{\leq N}(\eps r^{-1}\wh{U})|&\lesssim_N \eps t^{-1+C_N\eps};\\
|Z^{\leq N}(\wh{\mu}+2)|&\lesssim_N \brki{r-t}^{-1}t^{C_N\eps}.
\end{align}
Here all the functions are of $(t,x)$ via the map $(t,x)\mapsto (\eps\ln t-\delta,\wh{q}(t,x;\eps),x/|x|)=(s,q,\omega)$, which is different from Section \ref{dy:sec:grc}. We refer to \cite[(3.23)]{yu2024timelike}.

\subsection{Asymptotic formulas for $\td{u}$}\label{dy:sec:asytdu}
Because of the estimates \eqref{dy:est:diffutdu}, we can use the approximate solution $\td{u}$ (which equals $\wh{u}=\eps r^{-1}\wh{U}$ in $\Omega$ by \eqref{dy:eqn:tduwtu}) to approximate the exact solution $u$ to the scalar quasilinear wave equation \eqref{eq:qnlw}. Fix an integer $k\geq 0$ and let $Z^k$ denote an arbitrary product of $k$ commuting vector fields defined below \eqref{dy:est:ptw:lindblad}. To approximate $Z^ku$ with $k\geq 1$, we need to apply the chain rule and Leibniz's rule to compute $Z^k\wh{u}=Z^k(\eps r^{-1}\wh{U})$.  However, this computation can be very complicated, because there is no explicit formula for the function $\wh{q}$ in general. To handle this issue, in \cite[Proposition 3.3]{yu2024timelike}, several asymptotic formulas for $Z^k\wh{u}$ were presented.

Before we state these asymptotic formulas, we first introduce a new notation. Recall that we defined $11$ different commuting vector fields above Theorem \ref{thm:main}. Write them as $Z_{1},\dots,Z_{11}$. Fix a multiindex $I=(i_1,i_2,\dots,i_k)$ of the length $k=|I|$ with $1\leq i_1,\dots,i_k\leq 11$, we set
\begin{align*}
    Z^I:=Z_{i_1}Z_{i_2}\cdots Z_{i_k}.
\end{align*}
While we can write such a product as $Z^k$, it is important to specify which product we use in the asymptotic formulas.

We can now state \cite[Proposition 3.3]{yu2024timelike}. For each multiindex $I$, we define functions $U_I=U_I(s,q,\omega;\eps)$ and $A_I=A_I(q,\omega;\eps)$ inductively on $|I|$ for all $(s,q,\omega)\in\bbR\times\bbR\times\mathbb{S}^2$ and $\eps\ll_{|I|}1$. First, recall the definitions of $\wh{U}(s,q,\omega;\eps)$ and $\wh{A}=\wh{A}(q,\omega;\eps)$ from \eqref{dy:defn:whAmuU}. We set $U_0=\wh{U}$ and $A_0=-2\wh{A}$. Next, for each multiindex $I$ with $|I|>0$, we write $Z^I=ZZ^{I'}$ with $|I'|=|I|-1$. We now define
\begin{align}\label{eq:UI.def}
U_I(s,q,\omega;\eps)=\left\{
\begin{array}{ll}
\eps\partial_sU_{I'}+q\partial_qU_{I'}-U_{I'}, & Z^I=SZ^{I'};  \\
(\omega_i\partial_{\omega_j}-\omega_j\partial_{\omega_i})U_{I'}, & Z^I=\Omega_{ij}Z^{I'},\ 1\leq i<j\leq 3;\\
\eps\omega_i\partial_sU_{I'}-q\omega_i\partial_qU_{I'}+\partial_{\omega_i}U_{I'}-\omega_iU_{I'}, & Z^I=\Omega_{0i}Z^{I'},\ 1\leq i\leq 3;\\
0, & Z^I=\partial Z^{I'};
\end{array}\right.
\end{align}
and
\begin{align} \label{eq:AI.def}
A_I(q,\omega;\eps)=\left\{
\begin{array}{ll}
q\partial_qA_{I'}, & Z^I=SZ^{I'};  \\
(\omega_i\partial_{\omega_j}-\omega_j\partial_{\omega_i})A_{I'}, & Z^I=\Omega_{ij}Z^{I'},\ 1\leq i<j\leq 3;\\
-q\omega_i\partial_qA_{I'}+\partial_{\omega_i}A_{I'}-2\omega_iA_{I'}, & Z^I=\Omega_{0i}Z^{I'},\ 1\leq i\leq 3;\\
0, & Z^I=\partial Z^{I'}.
\end{array}\right.
\end{align}
Here, given a function $f(\omega)$ defined for $\omega\in\mathbb{S}^2$, we set $\partial_{\omega_i}f=\partial_i \widetilde{f} \restriction_{\mathbb{S}^2}$ where $\widetilde{f}(x)=f(x/|x|)$ is the extension of $f$ to a neighborhood of $\mathbb{S}^2$.
As seen from Section \ref{dy:sec:approximate}, for each fixed integer $N\geq 0$, the functions $\wh{U}$ and $\wh{A}$ are $C^N$ as long as $\eps\ll_N1$. Thus, we can define $U_I$ and $A_I$ whenever $\eps\ll_{|I|}1$. Note that $U_I\equiv A_I\equiv 0$ for all $q\geq \radius$. Besides, we have the pointwise bounds \eqref{dy:est:hatA} -- \eqref{dy:est:hatU2} with $(\wh{U},\wh{A})$ replaced by $(U_I,A_I)$. More precisely, fix an integer $N\geq 0$ and take $\eps\ll_N1$. For all $a,b,c\geq 0$ with $a+b+c\leq N$ and all $(s,q,\omega)\in[0,\infty)\times\bbR\times\mathbb{S}^2$, we have
\begin{align}
\partial_q^a\partial_\omega^cA_I&=O_{a,c,|I|}(\brki{q}^{-1-a+C_{a,c,|I|}\eps}); \label{eq:AI.bnd} \\
\partial_q^a\partial_s^b \partial_\omega^c\partial_qU_I&=O_{a,b,c,|I|}(\brki{q}^{-1-a-b+C_{a,b,c,|I|}\eps}e^{C_{a,b,c,|I|}s}); \label{eq:UI.est.1} \\
\partial_s^b \partial_\omega^cU_I&=O_{b,c,|I|}((\eps^{-1}\brki{q}^{C_{b,c,|I|}\eps}1_{b=0}+1_{b>0})e^{C_{b,c,|I|}s}).\label{eq:UI.est.2}
\end{align}
For all $0\leq c\leq N$, $s\geq 0$, $\omega\in \mathbb{S}^2$, and $q>C_0e^{C_0s}(e^{\delta/\eps}-e^{(\delta+s)/\eps}-1)$ for some constant $C_0>1$, we have
\begin{align} \partial_\omega^c U_I&=O_{N,C_0,|I|}(e^{C_{N,C_0,|I|}s}).
\end{align}
Finally, we also have estimates that connect $U_I$ and $A_I$. For all $a,b,c\geq 0$ with $a+b+c\leq N$ and all $(s,q,\omega)\in[0,\infty)\times\bbR\times\mathbb{S}^2$, we have
\begin{align} \label{eq:UI.integral.of.AI}\partial_q^a\partial_s^b\partial_\omega^c (2\partial_qU_I+A_I)(s,q,\omega;\eps)
&=O_{a,b,c,|I|}(\brki{q}^{-\max\{1,b\}-1-a+C_{a,b,c,|I|}\eps}e^{C_{a,b,c,|I|}s}).
\end{align}

We can now state the main estimates in \cite[Proposition 3.3]{yu2024timelike}. For each fixed multiindex $I$ and fixed integer $N\geq 0$, in $\Omega$ we have
\begin{equation}
\begin{split}
&\: |Z^{\leq N}\left(r Z^I\wh{u}-\eps U_I(\eps\ln t-\delta,|x|-t,x/|x|;\eps)\right)| \\
\lesssim_{N,|I|} &\: \eps(1+\ln\brki{r-t})\brki{r-t}^{-1}t^{C_{N,|I|}\eps} +\eps \brki{r-t}t^{-1+C_{N,|I|}\eps},\label{dy:est:prop3.3:timelike}
\end{split}
\end{equation}
\begin{equation}
    \begin{split}
        &\: |Z^{\leq N}\left(r(\partial_t-\partial_r)Z^I\wh{u}-\eps A_I(|x|-t,x/|x|;\eps)\right)| \\
\lesssim_{N,|I|} &\: \eps(1+\ln\brki{r-t})\brki{r-t}^{-2}t^{C_{N,|I|}\eps} +\eps t^{-1+C_{N,|I|}\eps},\label{dy:est:prop3.3:timelike.2}
    \end{split}
\end{equation}

as long as $\eps\ll_{|I|,N}1$.
We will apply the estimates \eqref{dy:est:prop3.3:timelike}--\eqref{dy:est:prop3.3:timelike.2} when $|r-t|\gg 1$ (e.g.,\ when $|r-t|\sim t^{\gamma}$ with $\gamma\in(0,1)$).

\subsection{Gauge independence} \label{dy:sec:gauge}
The definitions of the functions $A,A_1,A_2,\wh{A}$ in \eqref{dy:defn:AA1A2} and \eqref{dy:defn:whAmuU} depend on the choice of the optical function.
In Section \ref{dy:sec:eik}, we constructed a speicific optical function $q=q(t,x;\eps)$ by solving the eikonal equation \eqref{dy:eqn:eik}. There, we consider the region $\Omega=\Omega_{\delta,\radius,\eps}$ defined by \eqref{dy:defn:Omega} and assign the boundary condition $q\restriction_{\partial\Omega}=r-t$. Suppose that now we choose a different optical function. If all the derivations in Sections \ref{dy:sec:grc}--\ref{dy:sec:asytdu} still work, we will also obtain the corresponding functions $A,A_1,A_2,\wh{A}$. It is natural to ask how the new functions are related to the old ones. To answer this question, we present the gauge independence result from \cite[Section 3.6]{yu2024timelike}.

We first recall a definition \cite[Definition 3.7]{yu2024timelike}.
\begin{definition}\rm Fix $\delta,\kappa\in(0,1)$. Let $q=q(t,x;\eps)$ be a $C^2$ optical function defined for all $\eps\ll_{\kappa,\delta}1$ and for all $(t,x)\in\mathbb{R}^{1+3}$ with $t>e^{\delta/\eps}$ and $|x|>\kappa t$. That is, we have
\begin{align*}g^{\alpha\beta}(u)\partial_\alpha q\partial_\beta q=0,\qquad \forall t>e^{\delta/\eps}, |x|>\kappa t.\end{align*}
Also suppose that $q\restriction_{r-t\geq \radius}=r-t$. We say that the optical function $q$ is \emph{$(\delta,\kappa)$-admissible} if it satisfies the following assumptions:
\begin{enumerate}[a)]
\item For all $t>e^{\delta/\eps}$ and $|x|>\kappa t$, we have
\begin{align*}\sum_{|I|\leq 1}|Z^I(q_t-q_r,q_r^{-1},(q_t-q_r)^{-1})|&\lesssim t^{C\eps};\\
\sum_{|I|\leq 1}\sum_{i=1,2,3}|Z^I(q_i+\omega_iq_t)|&\lesssim t^{-1+C\eps};\end{align*}
\item The map $(s,q,\omega)=(\eps\ln t-\delta,q(t,x;\eps),x/|x|)$ induces a $C^1$ diffeomorphism from $\{t>e^{\delta/\eps},\ |x|>\kappa t\}$ to a subset of $[0,\infty)\times\mathbb{R}\times\mathbb{S}^2$, so every $C^1$ function of $(t,x)$ induces a $C^1$ function of $(s,q,\omega)$;
\item Define $(\mu,U)(t,x)=(q_t-q_r,\eps^{-1}ru)(t,x)$ and consider the induced functions  $(\mu,U)(s,q,\omega;\eps)$. The limits in \eqref{dy:defn:AA1A2} exist, i.e.\
\begin{align*}\left\{\begin{array}{l}
\displaystyle A(q,\omega;\eps):=-\frac{1}{2}\lim_{s\to\infty}(\mu U_q)(s,q,\omega;\eps),\\
\displaystyle A_1(q,\omega;\eps):=\lim_{s\to\infty}\exp(\frac{1}{2}G(\omega)A(q,\omega;\eps)s)\mu(s,q,\omega;\eps),\\
\displaystyle A_2(q,\omega;\eps):=\lim_{s\to\infty}\exp(-\frac{1}{2}G(\omega)A(q,\omega;\eps)s)U_q(s,q,\omega;\eps).
\end{array}\right.\end{align*}
One can also define the scattering data $\wh{A}$ by \eqref{dy:defn:whAmuU}.
\end{enumerate}
\end{definition}
\rm
It is clear that the optical function $q(t,x;\eps)$ constructed in the previous subsections is $(\overline{\delta},\kappa)$-admissible for all $\overline{\delta}\in[\delta,1)$ and $\kappa>1/2$.

We can now state the gauge independence result which is \cite[Proposition 3.8]{yu2024timelike}.

\begin{proposition}\label{dy:prop:gauge_indep}
Fix $\delta,\kappa,\overline{\delta},\overline{\kappa}\in(0,1)$. Let $q,\overline{q}$ be a $(\delta,\kappa)$-admissible optical function and a $(\overline{\delta},\overline{\kappa})$-admissible optical function, respectively. Define $\delta_0=\max\{\delta,\overline{\delta}\}$ and $\kappa_0=\max\{\kappa,\overline{\kappa}\}$. Then, whenever $\eps\ll_{\delta,\kappa,\overline{\delta},\overline{\kappa}}1$, we have
\begin{enumerate}[\rm i)]
\item  There exists $Q_\infty=Q_\infty(q,\omega;\eps)\in C^1(\mathbb{R}\times\mathbb{S}^2)$ such that
\begin{align*}Q_\infty(q,\omega;\eps)=\lim_{s\to\infty}\overline{q}(s,q,\omega;\eps)\end{align*}
where $\overline{q}(s,q,\omega;\eps)$ is the function induced by $\overline{q}(t,x;\eps)$ via the coordinate changes $(s,q,\omega)=(\eps\ln t-\delta,q(t,x;\eps),x/|x|)$. Besides,  we have
\begin{align*}\partial_qQ_\infty=\lim_{s\to\infty}\partial_q\overline{q},\qquad \partial_{\omega_i}Q_\infty=\lim_{s\to\infty}\partial_{\omega_i}\overline{q}.\end{align*}
All the convergences are here uniform in $(q,\omega)\in\mathbb{R}\times\mathbb{S}^2$.
\item
One can exchange the roles of $q$ and $\overline{q}$ to obtain $\overline{Q}_\infty(\overline{q},\omega;\eps)\in C^1(\mathbb{R}\times\mathbb{S}^2)$ such that
\begin{align*}\overline{Q}_\infty(\overline{q},\omega;\eps)=\lim_{\overline{s}\to\infty}q(\overline{s},\overline{q},\omega;\eps),\\
\partial_q\overline{Q}_\infty=\lim_{\overline{s}\to\infty}\partial_{\overline{q}}q,\quad \partial_{\omega_i}\overline{Q}_\infty=\lim_{\overline{s}\to\infty}\partial_{\omega_i}q.\end{align*}
Moreover,  we have \begin{align*}Q_\infty(\overline{Q}_\infty(\overline{q},\omega;\eps),\omega;\eps)&=\overline{q},\\\overline{Q}_\infty(Q_\infty(q,\omega;\eps),\omega;\eps)&=q.\end{align*}

\item Let $(A,A_1,A_2)(q,\omega;\eps)$ and $(\overline{A},\overline{A}_1,\overline{A}_2)(\overline{q},\omega;\eps)$ be the limits defined by \eqref{dy:defn:AA1A2}, respectively. Then,
\begin{align*}\overline{A}(Q_\infty(q,\omega;\eps),\omega;\eps)&=A(q,\omega;\eps),\\
\overline{A}_1(Q_\infty(q,\omega;\eps),\omega;\eps)&=(A_1\cdot \partial_qQ_\infty)(q,\omega;\eps)\cdot\exp(\frac{1}{2}G(\omega)A(q,\omega;\eps)\cdot (\delta-\overline{\delta})).\end{align*}
\end{enumerate}
\end{proposition}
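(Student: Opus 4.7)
My plan is to prove the three parts in order, with the technical core being a transport-type estimate showing that $\partial_s \overline{q}$, at fixed $(q, \omega)$, is integrable in $s \in [0, \infty)$. For part (i), differentiating the defining identities $q(t, r\omega) = q_0$ and $\omega = \omega_0$ with respect to $s$ (where $t = e^{(s+\delta)/\eps}$) yields the chain-rule identity
\[
\rst{\partial_s \overline{q}}_{(q,\omega)\text{ fixed}} = \frac{t}{\eps\, q_r}\bigl( q_r\, \overline{q}_t - q_t\, \overline{q}_r \bigr).
\]
The Wronskian-like combination on the right measures the failure of $\overline{q}$ to be a function of $(q, \omega)$ alone. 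Using that both $q$ and $\overline{q}$ solve the eikonal equation for the same metric $g^{\alpha \beta}(u)$ with $g^{\alpha \beta}(u) - \bfm^{\alpha \beta} = O(\eps\, t^{-1 + C\eps})$, together with the admissibility assumptions (in particular the lower bounds on $|q_t - q_r|$ and $|q_r|$, and the control $|q_i + \omega_i q_t| = O(t^{-1+C\eps})$), a careful accounting of the two eikonal equations yields quantitative decay for $q_r \overline{q}_t - q_t \overline{q}_r$ sufficient to make $\partial_s \overline{q}$ integrable in $s$. This produces the limit $Q_\infty(q, \omega;\eps) = \lim_{s \to \infty} \overline{q}(s, q, \omega; \eps)$ uniformly in $(q, \omega) \in \bbR \times \bbS^2$. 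To upgrade to $C^1$ convergence, I would commute $\partial_q$ and $\partial_{\omega_i}$ through the chain-rule identity, using the $Z^{\leq 1}$ admissibility bounds to control the resulting derivatives of the Wronskian.

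Part (ii) follows by symmetry: exchanging the roles of $q$ and $\overline{q}$ yields $\overline{Q}_\infty$, and the mutual-inverse property reflects the geometric fact that tracing a single outgoing null generator of $g^{\alpha\beta}(u)$ assigns consistent asymptotic labels in both gauges. For part (iii), I would exploit the identity
\[
\mu\, U_q = (\partial_t - \partial_r) U - (\eps / t)\, \partial_s U,
\]
which holds in the $(s, q, \omega)$ coordinates adapted to $q$ (obtained via the chain rule $\partial_t - \partial_r = (\eps/t) \partial_s + \mu\, \partial_q$). The combination $(\partial_t - \partial_r) U$ is intrinsic to spacetime (gauge-independent), while $(\eps/t)\, \partial_s U \to 0$ as $s \to \infty$ using $\partial_s \wh{U} = O(1)$ from \eqref{eq:UI.est.2}. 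The analogous identity in the $\overline{q}$-gauge shows $\overline{\mu}\, \overline{U}_{\overline{q}}$ has the same limit, yielding $\overline{A}(Q_\infty(q, \omega;\eps), \omega;\eps) = A(q, \omega;\eps)$. For $A_1$, expanding $\overline{\mu} = (\partial_t - \partial_r) \overline{q} = (\eps/t)\, \partial_s \overline{q} + \mu\, \partial_q \overline{q}$ and using $\partial_s \overline{q} \to 0$ and $\partial_q \overline{q} \to \partial_q Q_\infty$ from part (i) gives $\overline{\mu} \sim \mu \cdot \partial_q Q_\infty$ asymptotically; accounting for the time-origin shift $\overline{s} = s + (\delta - \overline{\delta})$ in the exponential weight then produces the factor $\exp(\frac{1}{2} G(\omega) A(q, \omega;\eps)(\delta - \overline{\delta}))$ in the transformation law for $A_1$.

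The main obstacle will be establishing the quantitative decay of the Wronskian $q_r \overline{q}_t - q_t \overline{q}_r$ (and its first-order derivatives) solely from the abstract admissibility assumptions, rather than from the concrete construction of Section~\ref{dy:sec:eik}. The needed cancellation arises from the near-parallelism of the gradients of $q$ and $\overline{q}$, which themselves are both close to $\nabla(r - t)$ with controlled angular parts; extracting this cancellation rigorously under the general admissibility hypotheses, without assuming an explicit form for either optical function, is the key computation that drives the rest of the argument.
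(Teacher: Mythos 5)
The paper does not prove Proposition~\ref{dy:prop:gauge_indep} at all; it is imported verbatim from \cite[Proposition~3.8]{yu2024timelike} (see the sentence preceding the statement in Section~\ref{dy:sec:gauge}). So there is no proof in this paper to compare against, and your sketch has to be judged on its own. Your chain-rule identity
\begin{equation*}
\partial_s\overline{q}\big|_{(q,\omega)\text{ fixed}} = \frac{t}{\eps\, q_r}\bigl(q_r\overline{q}_t - q_t\overline{q}_r\bigr)
\end{equation*}
is correct, and the strategy of proving $s$-integrability of $\partial_s\overline{q}$ is a legitimate route to the $s\to\infty$ limit.

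The gap is precisely in the step you flag as ``the main obstacle,'' and it is worse than your phrasing suggests: the ``near-parallelism'' cancellation does not come close to giving integrability. Writing $a := q_t + q_r$, $\overline{a} := \overline{q}_t + \overline{q}_r$, $W := q_r\overline{q}_t - q_t\overline{q}_r$, one has $W = a\overline{q}_t - q_t\overline{a}$, and the admissibility bounds $\abs{q_i + \omega_i q_t}\aleq t^{-1+C\eps}$ give $a,\overline{a} = O(t^{-1+C\eps})$, hence $W = O(t^{-1+C\eps})$. But then the chain-rule identity yields $\abs{\partial_s\overline{q}} \aleq \eps^{-1}t^{C\eps} = \eps^{-1}e^{C(s+\delta)}$, which \emph{grows} in $s$: you are short by a full power of $t^{C\eps}$, and integrability fails. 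What is actually needed is $W = O(t^{-1-\sigma})$, and to get there one must use the two eikonal equations \emph{jointly}, not just to bound each of $a$ and $\overline{a}$ separately. Indeed, with $\mu = q_t - q_r$, the eikonal equation rewrites as $a = N/\mu$ where $N := \abs{\nabla q}^2 - q_r^2 + 2h^{0i}q_t q_i + h^{ij}q_i q_j$ (and analogously for $\overline{a}$, with $h^{\alpha\beta}$ evaluated at the \emph{same} spacetime point); substituting both into $W = \tfrac12(a\overline{\mu} - \mu\overline{a}) = (2\mu\overline{\mu})^{-1}(N\overline{\mu}^2 - \mu^2\overline{N})$ and expanding to leading order, one discovers that the dominant piece of $N\overline{\mu}^2 - \mu^2\overline{N}$ is itself proportional to $W$ (via the factorization $q_t^2\overline{\mu}^2 - \overline{q}_t^2\mu^2 = W(q_t\overline{\mu} + \overline{q}_t\mu)$), so $W$ satisfies an implicit equation whose source is only $O(t^{-2+C\eps})$. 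This second-order cancellation is the substance of part (i), it must also be propagated through the $Z^{\leq 1}$ derivatives for the $C^1$ statement, and parts (ii) and (iii) all hinge on it. Stating that ``the needed cancellation arises from near-parallelism'' elides exactly this structure.

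Two smaller points on part (iii). First, while $(\partial_t - \partial_r)U$ is indeed gauge-independent as a function on spacetime, $A$ and $\overline{A}$ are limits along \emph{different} spacetime curves (fixed $(q,\omega)$ vs.\ fixed $(\overline{q},\omega)$); you must invoke part (i), with uniformity and continuity of the limits, to identify these curves asymptotically before equating the limits. Second, \eqref{eq:UI.est.2} gives $\partial_s\wh{U} = O(e^{Cs})$, not $O(1)$; your conclusion that $(\eps/t)\partial_s U\to 0$ survives because $\eps/t = \eps e^{-(s+\delta)/\eps}$ dominates, but the estimate should be quoted correctly.
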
\rm

It follows from Proposition \ref{dy:prop:gauge_indep} that the scattering data depends only on $\delta$. That is, given two admissible optical functions, if they correspond to the same $\delta$, then they will generate the same scattering data $\wh{A}$. We refer to \cite[Corollary 3.9]{yu2024timelike}. It also follows from Proposition \ref{dy:prop:gauge_indep}.iii) that $\frkL$ is independent of the choice of $\dlt$, as pointed out in Remark~\ref{rem:frkL}.

\section{Analysis in the intermediate and near zones} \label{sec:med-near}

\subsection{Preliminaries about the Minkowskian wave operator}

In this subsection, we collect some standard facts about the Minkowskian wave operator.

In the following, it is important to propagate vector field bounds. We first remark that the vector field bounds are equivalent to the following weighted estimates.
\begin{lemma}[Vector field bounds]\label{lem:vector.fields}
    Let $N \in \mathbb Z_{\geq 0}$. Then the following are equivalent for $t \geq 0$:
    \begin{equation*}
        \begin{split}
            &\: |Z^{\leq N} \phi|(t,x)\ls \brk{t+r}^{-\alp}\brk{t-r}^{-\bt} \\
            \iff &\:
    \begin{cases}
        \displaystyle\sum_{|a|\leq N} t^a |\rd^a \phi|(t,x) \ls (t+r)^{-\alp}\brk{t-r}^{-\bt} \quad \hbox{for $r\leq \f t2$} \\
        \displaystyle\sum_{|a|+b+|c|\leq N} \brk{t-r}^{|a|} \brk{t+r}^b |\rd^a (\rd_t + \rd_r)^b \Omg^c \phi|(t,x) \ls \brk{t+r}^{-\alp}\brk{t-r}^{-\bt} \quad \hbox{for $r\geq \f t2$}
    \end{cases}
        \end{split}
    \end{equation*}
    for any $\alp,\bt \in \mathbb R$.
\end{lemma}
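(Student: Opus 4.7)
The plan is to establish the equivalence pointwise in $(t,x)$, splitting the analysis into the interior region $\{r \le t/2\}$ and the exterior region $\{r \ge t/2\}$. In each region I would first establish a one-step equivalence — writing every vector field $Z$ as a bounded linear combination of weighted partials and conversely — and then iterate to the $N$-fold version by induction via the Leibniz rule.

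In the interior, $\brk{t-r} \sim \brk{t+r} \sim t$, so the weighted-partial family on the right side effectively reduces to $\{\rd_\alpha, \, t\rd_\alpha\}$. The key algebraic identity
\begin{equation*}
(t^2 - r^2) \rd_t = tS - x^i \Omega_{0i}
\end{equation*}
(with an analogous identity expressing $t\rd_i$ via $\Omega_{0i}$, $S$, and $\rd_t$) together with the bound $t^2 - r^2 \ge 3t^2/4$ writes $t\rd_\alpha$ as a smooth bounded combination of vector fields. Conversely, since $|x^i|\le r \le t/2$, each $Z$ is immediately a bounded combination of $\{\rd_\alpha, t\rd_\alpha\}$.

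In the exterior, I would use the null frame $L := \rd_t + \rd_r$, $\underline L := \rd_t - \rd_r$, and $\slashed{\rd}_i := \rd_i - \omega_i \rd_r$ with $\omega^i = x^i/r$. The central algebraic identities are
\begin{align*}
(t+r)L &= S + \omega^i \Omega_{0i}, \qquad (t-r)\underline L = S - \omega^i \Omega_{0i}, \qquad r \slashed{\rd}_i = -\omega^j \Omega_{ij},\\
S &= \tfrac{t+r}{2}L + \tfrac{t-r}{2}\underline L, \qquad \Omega_{0i} = \omega_i\!\left(\tfrac{t+r}{2}L - \tfrac{t-r}{2}\underline L\right) - \tfrac{t}{r}\omega^j \Omega_{ij}.
\end{align*}
Since $r\ge t/2$ forces $t/r \le 2$, and since $\brk{t-r}/\brk{t+r} \le 1$ while $\brk{t \pm r}/|t\pm r|$ is bounded on $\{|t\pm r|\ge 1\}$, these identities realize each $Z$ as a bounded combination of $\{\brk{t-r}\rd_\alpha, \brk{t+r}L, \Omega_{ij}\}$, and conversely express each weighted partial in terms of vector fields with bounded coefficients. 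The sub-region $\{|t-r|\le 1\}$ is handled directly, since $\brk{t-r}\sim 1$ there trivializes that weight and reduces matters to the elementary fact that $\rd_\alpha$ is itself a vector field.

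Iteration and main technical burden. Given the one-step equivalences, the $N$-fold version follows by induction on $N$: applying a vector field $Z$ (resp.\ a weighted partial $P$) to a sum $\sum_i c_i P_i \phi$ (resp.\ $\sum_i c_i Z_i \phi$) and invoking Leibniz produces further terms of the same form, provided the coefficients $c_i$ arising — rational functions in $t$, $x^i/r$, $t/r$, $(t\pm r)^{\pm 1}$, and $\brk{t\pm r}/(t\pm r)$ — have all their iterated derivatives by operators in either family uniformly bounded in the relevant sub-region. Verifying this closure of the coefficient algebra is the main technical burden; it is elementary but slightly tedious, following from scaling/homogeneity considerations in the interior and by direct inspection of the null-coordinate structure in the exterior.
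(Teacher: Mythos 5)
Your proof is correct and follows essentially the same route as the paper: the paper's proof is the single line ``this is a standard computation'' together with the two blocks of algebraic identities expressing the unweighted partials $\rd_\alpha$ via $\set{S,\Omg_{ij},\Omg_{0i}}$ (and conversely $\rd_t+\rd_r$, $\Omg_{ij}$ via the rest), which match the identities you write down for the interior and exterior regions. You fill in slightly more structure (the interior/exterior split, the observation that $t^2-r^2\ge 3t^2/4$ on $\set{r\le t/2}$, the separate treatment of $\set{|t-r|\le 1}$, and the remark that the induction to order $N$ rests on closure of the coefficient algebra under the relevant differentiations), all of which is the ``standard computation'' the paper defers.
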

\begin{proof}
    This is a standard computation after noting that
    \begin{align}
    \rd_i = \f{- x^j \Omg_{ij} + t \Omg_{0i} - x^i S}{t^2-r^2},\quad \rd_t = \f{tS - x^i \Omg_{0i}}{t^2 - r^2}, \label{eq:unweighted.in.terms.of.weighted.1}\\
    \rd_t + \rd_r = \f{S + \sum_{i=1}^{n} \frac{x^i}{r}\Omg_{0i}}{2(t+r)},\quad \Omg_{ij} = \f 1t(x^i \Omg_{0j} - x^j \Omg_{0i}).\label{eq:unweighted.in.terms.of.weighted.2}
    \end{align}
\end{proof}

\begin{lemma}\label{lem:comm.cutoff}
    The following identity holds for any function $\phi$:
    \begin{equation}
        \Box_{\bfm} (\chi_{>H}(t-r) \phi(t,x)) = \chi_{>H}(t-r) (\Box_\bfm \phi)(t,x) -2 \chi'_{>H}(t-r) r^{-1} (\rd_t + \rd_r)(r \phi)(t,x),
    \end{equation}
    where the cutoff function $\chi_{>H}$ is defined as in Definition~\ref{def:cutoff}.
\end{lemma}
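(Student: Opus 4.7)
The identity is a direct application of the Leibniz rule for $\Box_{\bfm}$. Writing $\psi(t,x) := \chi_{>H}(t-r)$, we have
\begin{equation*}
    \Box_{\bfm}(\psi \phi) = (\Box_{\bfm} \psi) \phi + 2 \bfm^{\alp\bt}(\rd_{\alp} \psi)(\rd_{\bt} \phi) + \psi \, \Box_{\bfm} \phi,
\end{equation*}
so the plan is simply to compute the first two terms on the right-hand side and recognize the combination $r^{-1}(\rd_{t}+\rd_{r})(r\phi)$.

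First I would compute the derivatives of $\psi$. Since $\rd_{t}(t-r) = 1$ and $\rd_{i}(t-r) = -x^{i}/r$, the chain rule gives $\rd_{t}\psi = \chi'_{>H}(t-r)$ and $\rd_{i}\psi = -(x^{i}/r)\chi'_{>H}(t-r)$. A short calculation then yields
\begin{equation*}
    \Box_{\bfm} \psi = -\rd_{t}^{2}\psi + \Delta \psi = -\chi''_{>H}(t-r) + \Bigl(\chi''_{>H}(t-r) - \tfrac{2}{r}\chi'_{>H}(t-r)\Bigr) = -\tfrac{2}{r}\chi'_{>H}(t-r),
\end{equation*}
where the two $\chi''$ contributions cancel because $(t-r)$ is a null function for $\bfm$, and the $-2/r$ factor comes from $\Delta r = 2/r$.

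Next, the cross term is
\begin{equation*}
    2 \bfm^{\alp\bt}(\rd_{\alp}\psi)(\rd_{\bt}\phi) = -2\chi'_{>H}(t-r)\,\rd_{t}\phi + 2\sum_{i}\Bigl(-\tfrac{x^{i}}{r}\chi'_{>H}(t-r)\Bigr)\rd_{i}\phi = -2\chi'_{>H}(t-r)\,(\rd_{t}+\rd_{r})\phi,
\end{equation*}
using $\sum_{i}(x^{i}/r)\rd_{i}\phi = \rd_{r}\phi$. Combining these, the non-$\psi\Box_{\bfm}\phi$ contribution equals
\begin{equation*}
    -\tfrac{2}{r}\chi'_{>H}(t-r)\phi - 2\chi'_{>H}(t-r)(\rd_{t}+\rd_{r})\phi = -2\chi'_{>H}(t-r)\cdot r^{-1}(\rd_{t}+\rd_{r})(r\phi),
\end{equation*}
where in the last step I use the elementary identity $(\rd_{t}+\rd_{r})(r\phi) = r(\rd_{t}+\rd_{r})\phi + \phi$ (from $(\rd_{t}+\rd_{r})r = 1$). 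This gives the claimed formula. There is no essential obstacle here; the entire proof is three lines of direct computation, and I would present it as such.
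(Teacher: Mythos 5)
Your computation is correct and complete. The paper itself simply states ``This is an explicit computation'' and cites \cite[Lemma~8.6]{LO} without reproducing the calculation, so your proof fills in exactly the elementary Leibniz-rule argument being referenced: compute $\Box_\bfm\psi = -\tfrac{2}{r}\chi'_{>H}(t-r)$ (the $\chi''$ terms cancel since $t-r$ is $\bfm$-null), compute the cross term as $-2\chi'_{>H}(t-r)(\rd_t+\rd_r)\phi$, and recombine via $(\rd_t+\rd_r)(r\phi)=r(\rd_t+\rd_r)\phi+\phi$. There is no divergence in approach, only in level of explicitness.
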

\begin{proof}
    This is an explicit computation; see \cite[Lemma~8.6]{LO}.
\end{proof}

We also record the standard Strong Huygens principle for the classical linear wave equation.
\begin{lemma}[Strong Huygens principle]\label{jl:lem:Huygens}
    Suppose $\phi$ and $F$ satisfy
    $$\Box_\bfm \phi = F,$$
    where $(\phi, \rd_t \phi)\restriction_{t=0} = (0,0)$ and $F$ is supported in $\{(t,x): t\geq 0\}$.

    Given $(T,X)\in \mathbb R^{1+3}$, the value of $\phi(T,X)$ only depends on $F$ in the set
    $$D_{T,X} = \{(t,x): T - |X| \leq t+r \leq T + |X|,\, t-r \leq |T| - |X| \}.$$
\end{lemma}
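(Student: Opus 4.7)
The plan is to reduce the statement to the classical Kirchhoff representation formula in $3+1$ dimensions and then verify a purely geometric inclusion. First, I would dispose of the case $T < 0$ trivially: since $F \equiv 0$ on $\{t < 0\}$ and the Cauchy data at $t = 0$ vanish, uniqueness for the (time-reversed) Cauchy problem forces $\phi \equiv 0$ on $\{t < 0\}$, so the dependence claim holds vacuously.

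For $T \geq 0$, I would invoke Kirchhoff's formula,
\begin{equation*}
\phi(T, X) = \frac{1}{4\pi} \int_{0}^{T} (T - s) \int_{\bbS^{2}} F(s, X + (T-s)\omg) \, \ud \mathring{\slashed{\sigma}}(\omg) \, \ud s,
\end{equation*}
which is the quantitative expression of the strong Huygens principle in $3+1$ dimensions: the value $\phi(T,X)$ is determined by the restriction of $F$ to the backward null cone $C^{-}_{T, X} := \{(s, X + (T-s)\omg) : s \in [0, T], \, \omg \in \bbS^{2}\}$ alone, not on any open set in spacetime. I would simply quote this classical formula rather than rederive it via spherical means.

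It then suffices to check the containment $C^{-}_{T, X} \subseteq D_{T, X}$. For any $(s, x) = (s, X + (T-s)\omg) \in C^{-}_{T, X}$, setting $t = s$ and $r = \abs{x}$, the triangle inequality yields $\abs{\abs{X} - (T-s)} \leq r \leq \abs{X} + (T-s)$. A short case analysis on the sign of $(T-s) - \abs{X}$, together with the constraint $0 \leq s \leq T$, produces the bounds $T - \abs{X} \leq t + r \leq T + \abs{X}$ and $t - r \leq T - \abs{X} = \abs{T} - \abs{X}$, which are precisely the defining inequalities of $D_{T, X}$ for $T \geq 0$. The only ``obstacle'' here is routine bookkeeping through the triangle inequality; there is no analytic content beyond the strong Huygens principle, which does all the heavy lifting. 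Note that the lemma is in fact weaker than strong Huygens, since $D_{T, X}$ is four-dimensional while $C^{-}_{T, X}$ is three-dimensional; the thickening is presumably meant to produce a set with cleaner defining inequalities for use in later arguments of the paper.
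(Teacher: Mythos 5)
Your proof is correct. The paper states this lemma without proof as a standard fact, so there is no argument to compare against; your route via Kirchhoff's representation formula (equivalently, the retarded Green's function $\delta(t-r)/4\pi r$ in three spatial dimensions) is the standard and natural one, and your geometric verification that the backward light cone $C^{-}_{T,X}$ lies inside the thickened region $D_{T,X}$ is sound — the two case splits on the sign of $(T-s)-\abs{X}$ check out, as does the vacuous handling of $T<0$. One microscopic point: with the paper's convention $\Box_{\bfm} = -\rd_t^2 + \Delta$, Kirchhoff's formula should carry an overall minus sign ($\phi = -\frac{1}{4\pi}\int_0^T(T-s)\int_{\bbS^2}F(s, X+(T-s)\omg)\,\ud\mathring{\slashed{\sigma}}(\omg)\,\ud s$); this has no bearing on the support argument but is worth getting right for the record. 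Your closing remark is also apt: $D_{T,X}$ is a four-dimensional thickening of the three-dimensional cone $C^{-}_{T,X}$, so the lemma is deliberately weaker than strong Huygens — the thickening gives a region with clean defining inequalities in $(t+r, t-r)$ that plugs directly into Proposition~\ref{jl:prop:invert.wave} later.
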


To conclude this subsection, we give an estimate concerning the flat inhomogeneous linear wave equation. While the bound is standard, we give a proof for completeness. In the proof, we in particular follow \cite{MTT} to reduce to spherically symmetric estimates.
\begin{proposition}\label{jl:prop:invert.wave}
    Suppose $\phi$ and $F$ satisfy
    $$\Box_\bfm \phi = F,$$
    where $(\phi, \rd_t \phi)\restriction_{t=0} = (0,0)$ and $F$ is supported in $\{(t,x): t\geq 0\}$.

    For every fixed $T> 0$, if there exist $B,\, \eta_0 >0$, $\alp>1$ such that $F$ satisfies the estimates for all $t \in [0,T]$,
    $$|Z^{\leq M_0}F|(t,x) \leq \begin{cases}
        B \brk{t-r}^{-\alp} \brk{r}^{-2-\eta_0} & \hbox{when $r\leq \f t 2$} \\
        B \brk{t-r}^{-1-\eta_0} \brk{r}^{-\alp-1} & \hbox{when $r \geq \f t 2$}
    \end{cases},$$
    then $\phi$ satisfies the following estimates
    $$|Z^{\leq M_0} \phi|(T,x) \ls B \eta_0^{-1} \min \{\brk{T-r}^{-\alp}, (\alp -1)^{-1}\brk{r}^{-1} \brk{T-r}^{-\alp+1} \}.$$
\end{proposition}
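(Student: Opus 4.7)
The plan, following \cite{MTT}, is to reduce the estimate to a one-dimensional wave equation for a spherically symmetric majorant, and then to carry out a direct integral computation. First, commute with vector fields: since $[\Box_\bfm, Z] \in \{0, 2 \Box_\bfm\}$ for $Z \in \{\rd_\alp, S, \Omg_{ij}, \Omg_{0i}\}$, iterating gives $\Box_\bfm(Z^I \phi) = \sum_{|I'| \leq |I|} c_{I,I'} Z^{I'} F$ with zero initial data (after extending $\phi$ and $F$ by zero for $t < 0$, which is consistent with $(\phi, \rd_t \phi)|_{t=0} = 0$). Hence it suffices to prove the bound for $\phi$ itself in the case $M_0 = 0$, with $|Z^{\leq M_0} F|$ in place of $|F|$ on the right-hand side of the hypothesis.

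Second, apply Kirchhoff's formula to obtain $|\phi(T, x)| \leq \Psi(T, r)$, where $\Psi \geq 0$ is the spherically symmetric solution of $\Box_\bfm \Psi = -H$ with zero data and $H(s, \rho)$ is taken to be the right-hand side of the hypothesis (a majorant of $|F(s,y)|$ depending only on $(s, |y|)$). Then $v := r \Psi$ solves $(\rd_t^2 - \rd_r^2) v = r H$ on $r \geq 0$ with zero data. Extending $v$ oddly across $r = 0$ (so that $r H$ also extends oddly) and applying d'Alembert, then using the odd-extension cancellation to simplify the $\rho$-range in the first integral, yields
\begin{equation*}
r \Psi(T, r) = \tfrac{1}{2} \int_0^{(T-r)_+} \int_{T-s-r}^{T-s+r} \rho H(s, \rho) \, d\rho \, ds + \tfrac{1}{2} \int_{(T-r)_+}^{T} \int_{|r - (T-s)|}^{r + (T-s)} \rho H(s, \rho) \, d\rho \, ds.
\end{equation*}

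Third, estimate the two integrals using the piecewise form of $H$ (interior $\rho \leq s/2$ versus exterior $\rho \geq s/2$) in null coordinates $\sigma = s - \rho$. For the coarser bound $(\alp-1)^{-1} \brk{r}^{-1} \brk{T-r}^{-\alp + 1}$, one integrates crudely over the full backward cone: the $\rho$-integration yields an $\eta_0^{-1}$ factor (interior, from $\int \rho \brk{\rho}^{-2-\eta_0} d\rho$) or an $(\alp - 1)^{-1}$ factor (exterior, from $\int \rho \brk{\rho}^{-\alp-1} d\rho$), and the outgoing $\sigma$-integration over $\sigma \leq T - r$ then produces the $(\alp-1)^{-1} \brk{T-r}^{-\alp+1}$ factor. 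For the sharp bound $\brk{T-r}^{-\alp}$, exploit the thin-strip structure of the first integral: $\rho$ ranges only over an interval of width $2r$ centered at $T - s$, providing an extra factor of $r$ that cancels the $r^{-1}$ on the left-hand side; the remaining $s$-integration against $\brk{2s - T}^{-1-\eta_0}$ (exterior regime) or $\brk{T-s}^{-\alp}$ (interior regime) yields the sharp $\brk{T-r}^{-\alp}$ with the $\eta_0^{-1}$ factor. The second integral (for $s \in [(T-r)_+, T]$) is handled analogously, with $T - s \leq r$ playing the role of the effective width.

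The main obstacle is the last step, specifically extracting the thin-strip cancellation responsible for the sharp $\brk{T-r}^{-\alp}$ bound. The crude estimate suffices in the wave zone $r \gtrsim T - r$ (where the $(\alp-1)^{-1}\brk{r}^{-1}\brk{T-r}^{-\alp+1}$ factor already dominates the $\min$), but the thin-strip cancellation is essential in the near zone $r \ll T - r$, where only the $\brk{T-r}^{-\alp}$ bound gives the correct decay. The interior/exterior piecewise nature of $H$ further requires a careful case analysis, keeping track of where the $\eta_0^{-1}$ and $(\alp - 1)^{-1}$ factors appear and where the two sub-integrals meet at $s = T - r$.
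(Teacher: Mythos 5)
Your Steps 1 and 2 match the paper's strategy: commute with vector fields (using $[\Box_\bfm, Z] \in \{0, 2\Box_\bfm\}$), dominate $|Z^{\leq M_0}F|$ by a spherically symmetric majorant, invoke positivity of the forward fundamental solution, reduce to a 1D problem for $r\Psi$, and write an explicit integral formula for $r\Psi$ over the thin diamond $T-r \leq s+\rho \leq T+r$, $s-\rho \leq T-r$, $s,\rho \geq 0$. Your d'Alembert formula and the change of variables to the paper's $\int_0^r\int_0^{T-r+s}(T-r+2s-s')\widetilde{F}(s',T-r+2s-s')\,ds'\,ds$ are both correct.

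However, Step 3 contains a genuine error, and it is precisely the step you flagged as "the main obstacle." You propose to factorize in the null coordinate $\sigma = s - \rho$, integrating $\rho$ first at fixed $\sigma$ (getting $\eta_0^{-1}$ in the interior or $(\alp-1)^{-1}$ in the exterior) and then integrating $\sigma$ over $\sigma \leq T-r$ to produce $(\alp-1)^{-1}\brk{T-r}^{-\alp+1}$. This does not work: for $T - r > 0$, the constraint $\sigma \leq T-r$ is \emph{not} a thin strip, and $\int_{-\infty}^{T-r}\brk{\sigma}^{-\alp}\,d\sigma$ is bounded \emph{below} by a positive constant (the contribution from $|\sigma| \leq 1$), so it is not $O(\brk{T-r}^{-\alp+1})$. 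Likewise in the exterior regime your proposed factorization produces $\eta_0^{-1}(\alp-1)^{-1}$ with no $\brk{T-r}$ decay at all. The thin-strip cancellation you correctly identify for the sharp bound lives in the \emph{other} null coordinate $v = s + \rho$, which is confined to $[T-r, T+r]$. The right computation (and this is what the paper's $(s,s')$-parametrization encodes) integrates along each ingoing ray $v = $ const first: in \emph{both} the interior ($\rho\brk{\rho}^{-2-\eta_0}$ integrable, giving $\eta_0^{-1}$; the $\brk{s-\rho}^{-\alp}$ factor is $\aleq \brk{v}^{-\alp}$ there because $s - \rho \gtrsim v$) and the exterior ($\brk{s-\rho}^{-1-\eta_0}$ integrable, giving $\eta_0^{-1}$; the $\rho\brk{\rho}^{-\alp-1}$ factor is $\aleq \brk{v}^{-\alp}$ there because $\rho \gtrsim v$), the inner integral is $\aleq B\eta_0^{-1}\brk{v}^{-\alp}$. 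The outer integral $\int_{T-r}^{T+r}\brk{v}^{-\alp}\,dv$ then yields $\min\{2r\brk{T-r}^{-\alp}, (\alp-1)^{-1}\brk{T-r}^{-\alp+1}\}$ by either the trivial length bound or extending to $[T-r,\infty)$. In particular the $(\alp-1)^{-1}$ arises solely from the $v$-integral, not from the $\rho$-integral as you wrote, and both the sharp and coarse bounds come from the same thin-strip integral (not from two separate arguments as you describe). Your "sharp bound" sketch is closer to the truth but still has the interior/exterior exponents swapped and does not address the regions $s \approx T/2$ and $s \approx T-r$ where the approximations $\rho \approx T-s$, $s-\rho \approx 2s-T$ degenerate; working in the $(v,\sigma)$ coordinates adapted to the strip as above removes these difficulties cleanly.
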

\begin{proof}
    By the positivity of the fundamental solution to the linear wave equation on Minkowski, it suffices to bound $\psi$ satisfying
    $$\Box_\bfm \psi = \widetilde{F} := \begin{cases}
        B \brk{t-r}^{-\alp} \brk{r}^{-2-\eta_0} & \hbox{when $r\leq \f t 2$} \\
        B \brk{t-r}^{-1-\eta_0} \brk{r}^{-\alp-1} & \hbox{when $r \geq \f t 2$}
    \end{cases},$$
    with $(\psi,\rd_t \psi)\restriction_{t=0} = (0,0)$.

    Such a $\psi$ is spherically symmetric and thus satisfies $(\rd_t-\rd_r)(\rd_t + \rd_r)(r\psi) = - r \widetilde{F}$. Slightly abusing notation, we write $\psi$ and $\widetilde{F}$ as functions of $(t,r)$. An explicit computation using the method of characteristics shows that
    \begin{equation}
        \begin{split}
            r\psi(T,r) = - \int_{0}^r \int_{0}^{T-r+s} (T-r+2s-s') \widetilde{F}(s',T-r+2s-s') \, \ud s' \, \ud s.
        \end{split}
    \end{equation}
    Hence,
    \begin{equation}
        \begin{split}
            |r\psi|(T,r) \leq &\: \int_0^r \int_0^{\f 23(T-r+2s)} (T-r+2s-s') \widetilde{F}(s',T-r+2s-s') \, \ud s' \, \ud s \\
            &\: + \int_0^r \int_{\f 23(T-r+2s)}^{T-r+s} (T-r+2s-s') \widetilde{F}(s',T-r+2s-s') \, \ud s' \, \ud s \\
            \ls &\: B \int_0^r \int_0^{\f 23(T-r+2s)} \brk{T-r+2s-s'}^{-\alp} \brk{2s'-(T-r+2s)}^{-1-\eta_0} \, \ud s' \, \ud s \\
            &\: + B \int_0^r \int_{\f 23(T-r+2s)}^{t-r+s} \brk{T-r+2s-s'}^{-1-\eta_0} \brk{2s'-(T-r+2s)}^{-\alp} \, \ud s' \, \ud s \\
            \ls &\: B \eta_0^{-1} \int_0^r \brk{T-r+2s}^{-\alp}\, \ud s \\
            \ls &\: B \eta_0^{-1} \min \{ r \brk{T-r}^{-\alp}, (\alp -1)^{-1} \brk{T-r}^{-\alp+1} \},
        \end{split}
    \end{equation}
    which gives the desired estimates. \qedhere
\end{proof}

\subsection{Vector field bounds for the comparison solution $v$}

Notice that from \eqref{eq:v.def}, we also obtain the value of $rv$ on the cone $\{t = r\}$ by integration using the condition that $rv = 0$ when $r = 0$.

\begin{lemma}\label{lem:v.vector.fields}
    For any fixed $N_0 \in \bbZ_{\geq 1}$, there exists $\ep_0 >0$ sufficiently small such that the comparison solution $v$ satisfies the following estimates for all $N\leq N_0$ if $\ep \in (0,\ep_0]$:
    \begin{equation}
        |Z^{\leq N} v|(t,x) \ls_N \ep \brk{t}^{-1+C_N \ep},\quad \hbox{when $r\leq t$}.
    \end{equation}
\end{lemma}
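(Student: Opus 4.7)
The plan is to prove the bound in three steps: first derive pointwise estimates for the characteristic data $\mathfrak L$; next bound $(rv)$ restricted to the cone $\{t=r\}$; and finally propagate these bounds into the interior $\{r\leq t\}$ while commuting with vector fields. For step one, I would start from the reparametrized representation~\eqref{eq:frkL.def-Ahat} and apply the estimates~\eqref{dy:est:hatA} on $\wh{A}$: the factor $G(\omg)\wh{A}^{2}$ is pointwise bounded by $\brk{p}^{-2+C\ep}$ (integrable in $p$), while the exponential factor is bounded uniformly in $p$ by $t^{C\ep}$ since $\abs{G(\omg)\wh{A}(p,\omg)(\ep\ln t-\dlt)}\ls \ep\ln t$; hence $\abs{\mathfrak L}(t,\omg)\ls \ep^{2}t^{C\ep}$ for $t\geq 10$. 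Each $t\rd_{t}$ applied to $\mathfrak L$ brings out $\tfrac{1}{2}G(\omg)\wh{A}(p,\omg)\,\ep = O(\ep)$ from differentiating the exponent (without harming $p$-integrability), while each $\rd_{\omg}$ hits either $G$ or $\wh{A}$ and preserves integrability thanks to~\eqref{dy:est:hatA}. This yields
\begin{equation*}
\abs{(t\rd_{t})^{b}\rd_{\omg}^{c}\mathfrak L}(t,\omg) \ls_{b,c} \ep^{2}t^{C_{b,c}\ep},\qquad t\geq 10,\ b+c\leq N.
\end{equation*}

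For step two, I would integrate the boundary condition $(\rd_{t}+\rd_{r})(rv)\restriction_{t=r}(t,t\omg)=\chi_{>10}(t)\mathfrak L(t,\omg)/t$ along the null generators of the cone, starting from the vertex $r=0$ where $rv$ vanishes. This gives $(rv)(t,t\omg)=\int_{0}^{t}\chi_{>10}(s)\mathfrak L(s,\omg)\,s^{-1}\,\ud s = O(\ep t^{C\ep})$. Iterating with cone-tangential vector fields --- noting that on $\{t=r\}$ one has $S=t(\rd_{t}+\rd_{r})$, together with the angular rotations $\Omg_{ij}$ --- produces the same bound for such derivatives of $rv$. Transverse derivatives of the form $\rd_{t}-\rd_{r}$ near the cone can then be recovered from the wave equation in the form $(\rd_{t}-\rd_{r})(\rd_{t}+\rd_{r})(rv) = r^{-2}\mathring{\slashed{\lap}}(rv)$, so ultimately $\abs{Z^{\leq N}(rv)}\restriction_{t=r} \ls \ep t^{C\ep}$.

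For step three, I would invoke a Kirchhoff/Friedlander-type representation formula for the Goursat problem in $3+1$ dimensions: for $(t,x)$ with $\abs{x}<t$, $v(t,x)$ is a surface integral over $\omg\in\bbS^{2}$ of $(rv)\restriction_{t=r}$ (and its cone-tangential derivatives) evaluated at the intersection of the past null cone from $(t,x)$ with $\{t=r\}$, parametrized by $r(\omg) = (t^{2}-\abs{x}^{2})/(2(t-x\cdot\omg))$. Since $r(\omg)\sim t$ on this intersection, step two gives an integrand of size $O(\ep t^{C\ep})$, and the geometric weight contributes an additional factor $t^{-1}$, yielding $\abs{v}(t,x) \ls \ep t^{-1+C\ep}$. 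Since $\rd_{\alp},\Omg_{ij},\Omg_{0i}$ commute with $\Box_{\bfm}$ and $[\Box_{\bfm},S]=-2\Box_{\bfm}$, each $Z^{I}v$ also satisfies $\Box_{\bfm}(Z^{I}v)=0$; applying the same representation to $Z^{I}v$ with cone data controlled by step two yields the stated vector field bound. The main obstacle I expect is in step three: while such representation formulas are classical, obtaining sharp pointwise bounds \emph{uniformly up to $r=t$} requires delicate treatment of the near-cone region where the kernel degenerates. As a backup, one can apply Lemma~\ref{lem:comm.cutoff} to $\td{v}:=\chi_{>H}(t-r)v$ to obtain the inhomogeneous equation $\Box_{\bfm}\td{v} = -2r^{-1}\chi'_{>H}(t-r)(\rd_{t}+\rd_{r})(rv)$ and then invoke Proposition~\ref{jl:prop:invert.wave}; this requires a bootstrap (since the source involves $v$ near the cone), with the thin strip $\{t-r\leq H\}$ handled directly from step two.
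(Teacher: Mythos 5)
Your proposal has the right overall shape (estimate the data $\mathfrak L$, bound $rv$ on the cone, then propagate into the interior) and correctly identifies Lemma~\ref{lem:comm.cutoff} and Proposition~\ref{jl:prop:invert.wave} as the right endgame. Steps one and two are essentially the paper's Step~1. But the critical middle of the argument is missing, and this is a genuine gap.

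Your primary route (a Kirchhoff/Friedlander-type Goursat representation) is not carried out, and you yourself flag the near-cone degeneracy without resolving it. This is a real difficulty: the intersection of the backward cone from $(t,x)$ with $\set{t=r}$ is parametrized by $r(\omg')=(t^2-\abs{x}^2)/(2(t-x\cdot\omg'))$, which ranges over $[(t-\abs{x})/2,(t+\abs{x})/2]$, so the claimed uniform $t^{-1}$ geometric weight requires a Jacobian computation near the degenerate end $r(\omg')\to 0$ as $\abs{x}\to t$; the paper avoids this route entirely.

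Your backup route mirrors the paper's final step, but to invert $\Box_\bfm \td v = -2r^{-1}\chi'_{>H}(t-r)(\rd_t+\rd_r)(rv)$ via Proposition~\ref{jl:prop:invert.wave} you need pointwise control of $(\rd_t+\rd_r)(rZ^Iv)$ on the strip $\set{H/2\leq t-r\leq H}$, not just on the cone $\set{t=r}$. Your step two only gives bounds at $t=r$, and the assertion that the strip is ``handled directly from step two'' does not follow. The paper bridges this gap with two steps you omit: (a) an energy flux estimate through the null cone $\set{t=r}$ combined with Klainerman--Sobolev, giving the non-circular weak decay $\abs{\rd Z^{\leq N}v}\ls \ep\brk{t}^{-\f12}$ in $\set{r\leq t}$ (this loses derivatives, which is why the lemma is stated for a fixed $N_0$ with $\ep_0=\ep_0(N_0)$); and (b) feeding this into the transport equation $(\rd_t-\rd_r)(\rd_t+\rd_r)(rZ^Iv)=r^{-2}\rslap(rZ^Iv)$ and integrating from the cone, which yields the improved strip bound $\abs{(\rd_t+\rd_r)(rZ^Iv)}\ls \ep^2\brk{r}^{-1+C\ep}+\ep\brk{r}^{-3/2+C\ep}$ for $t-r\leq 2$ — strong enough for Proposition~\ref{jl:prop:invert.wave} to return $\brk{T}^{-1+C\ep}$. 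Your proposed bootstrap is neither set up nor shown to close with the sharp constant; the energy-plus-Klainerman--Sobolev weak-decay step is the missing ingredient that makes the argument both non-circular and quantitatively adequate.
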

\begin{proof}
    The argument in this lemma loses derivatives. We will use the convention that $N$ represents possibly different non-negative integers from line to line, which decrease as the proof proceeds. 

    \pfstep{Step~1: Data bound} Recalling the initial data from \eqref{eq:v.def}, the formula \eqref{eq:frkL.def-Ahat} for $\frkL$, and the bound \eqref{dy:est:hatA}, we immediately obtain
    \begin{equation}
        \sum_{\substack{b+|\alp|\leq N}} |\brk{t}^b(\rd_t+ \rd_r)^b \Omg^\alp  v|(t,x) \ls_N \ep \brk{t}^{-1+C_N\ep}\quad \hbox{when $t = r$}.
    \end{equation}
    Note that we have lost a power of $\epsilon$ when integrating to bound $rv$ from $(\rd_t + \rd_r)(rv)$.

    To control the $\rd_t - \rd_r$ derivatives, we analyze the wave equation as a transport equation
    \begin{equation}
        -(\rd_t +\rd_r)(\rd_t - \rd_r)(rv) + \f 1{r^2} \rslap (rv) = 0.
    \end{equation}
    Note that $\rslap (rv)$ can be controlled by $r \sum_{|\alp| = 2} |\Omg^\alp v|$.
    Using the bounds we have already obtained, we can inductively control higher $\rd_t -\rd_r$ derivatives so as to show that
    \begin{equation}
        \sum_{a+ b+|\alp|\leq N}|t^b(\rd_t+ \rd_r)^b \Omg^\alp (\rd_t - \rd_r)^a v|(t,x) \ls \ep \brk{t}^{-1+C_N\ep}\quad \hbox{when $t = r$}.
    \end{equation}

    \pfstep{Step~2: Weak decay bound} Using Lemma~\ref{lem:vector.fields} and the bounds from Step~1, it follow that $Z^I v$ satisfies
    \begin{equation}\label{eq:ZleqNv.data}
        |Z^{\leq N} v|(t,x) \ls \ep \brk{t}^{-1+C_N \ep},\quad |(\rd_t + \rd_r) Z^{\leq N} v|(t,x) \ls \ep \brk{t}^{-2+C_N \ep} \quad \hbox{when $t = r$}.
    \end{equation}
    In particular, as long as $\ep$ is sufficiently small (with respect to $N$), the energy on the cone satisfies
    \begin{equation}
        \int_{t=r} \Big( ((\rd_t + \rd_r) Z^{\leq N} v)^2 + r^{-2} |\mathring{\slashed{\nabla}} Z^{\leq N}  v|^2 \Big)\, r^2 \ud \mathring{\slashed{\sigma}} \ud t \ls \ep^2 \int_0^\infty \brk{t}^{-2+2C_N \ep} \ud t \ls \ep^2,
    \end{equation}
    where $|\mathring{\slashed{\nabla}} v|^2$ denotes that the squared norm with respect to the gradient on the unit sphere and satisfies $|\mathring{\slashed{\nabla}} v|^2 \ls \sum_{|\alp| = 1}|\Omg^{\alp} v|^2$.

    Standard energy conservation thus gives that
    \begin{equation}
        \int_{\bbR^3} (\rd Z^{\leq N} v)^2(t,x) \, \ud x \ls_N \ep^2.
    \end{equation}
    We then deduce from Klainerman's Sobolev inequality (see e.g., \cite[Chapter 2, Theorem~1.3]{MR2455195}) that
    \begin{equation}\label{eq:trivial.Klainerman.1}
        |\rd Z^{\leq N} v|(t,x) \ls_N \ep \brk{t-r}^{-\f 12}\brk{t}^{-1}
    \end{equation}
    in the region $r\leq t$. Integrating along the integral curves of $\rd_t -\rd_r$ starting from the set $\{t=r\}$, and using the bounds \eqref{eq:ZleqNv.data} and \eqref{eq:trivial.Klainerman.1}, we then obtain
    \begin{equation}\label{eq:trivial.Klainerman.2}
        |\rd Z^{\leq N} v|(t,x) \ls_N \ep \brk{t}^{-\f 12}.
    \end{equation}

    \pfstep{Step~3: Improvement of the decay estimates for $(\rd_t+\rd_r)(rZ^I v)$ when $t-r \leq 2$} Since $\Box_{\bfm} v = 0$, we have $\Box_\bfm Z^I v = 0$ for any $I$. The wave equation can be rewritten as follows:
    \begin{equation} \label{eq:wave4ZIv}
        -(\rd_t +\rd_r)(\rd_t - \rd_r)(r Z^Iv) + \f 1{r^2} \rslap (r Z^I v) = 0,
    \end{equation}
    Using the weak decay estimate \eqref{eq:trivial.Klainerman.2} in Step~2, we have (for $|I|\leq N$ and $\ep$ sufficiently small)
    \begin{equation}\label{eq:wave.for.v.commuted.to.integrate}
        |(\rd_t +\rd_r)(\rd_t - \rd_r)(r Z^Iv)|(t,x) \ls_N \eps r^{-1} \brk{t}^{-\f 12}.
    \end{equation}
    We now integrate this along the integral curves of $\rd_t - \rd_r$ to control $(\rd_t +\rd_r)(r Z^Iv)$ in the region $t - r \leq 2$. Observe that by the cut-off of the initial data in \eqref{eq:v.def} and finite speed of propagation, we have $v \equiv 0$ in the set $\{(t,x): 0\leq t \leq 5, r \leq t\} \cup \{ (t,x): 5\leq t \leq 10, r \leq 10-t\}$. In particular, on the support of $v$, we must have $r \geq 4$ when $t-r\leq 2$.

    Therefore, integrating \eqref{eq:wave.for.v.commuted.to.integrate} along the integral curves of $\rd_t - \rd_r$ and using the initial data bound \eqref{eq:ZleqNv.data}, we obtain
    \begin{equation}\label{eq:dvrZIv.est}
        \begin{split}
            |(\rd_t +\rd_r)(r Z^Iv)|(t,x) \ls_N &\: \eps^2\brk{r}^{-1+C_{|I|} \ep}+\eps \brk{t-r}\brk{r}^{-\f 32+C_{|I|}\eps} \\
            \ls_N &\: \eps^2\brk{r}^{-1+C_{|I|} \ep} + \eps\brk{r}^{-\f 32+C_{|I|}\eps}\quad \hbox{when $t-r\leq 2$}.
        \end{split}
    \end{equation}

    \pfstep{Step~4: Conclusion of the argument} Applying Lemma \ref{lem:comm.cutoff} with $H=2$, we obtain
    \begin{equation}\label{eq:RHS.of.cutoff.v}
        \Box_{\bfm} (\chi_{>2}(t-r) Z^I v(t,x)) = -2 \chi'_{>2}(t-r) r^{-1} (\rd_t + \rd_r)(r Z^I v)(t,x).
    \end{equation}
    Observe that by \eqref{eq:dvrZIv.est},
    \begin{equation}\label{eq:est.RHS.of.cutoff.v.1}
        |\hbox{RHS of \eqref{eq:RHS.of.cutoff.v}}| \ls_N \eps^2\brk{r}^{-1+C_{|I|} \ep} + \eps\brk{r}^{-\f 32+C_{|I|}\eps}.
    \end{equation}

    We now use $(T,X)$ (with $T-|X| \geq 2$) to denote the output point at which we estimate $Z^I v$ (i.e., we estimate $\chi_{>2}(T-|X|) Z^I v(T,X)$), and use $(t,x)$ to denote an input point.

    First, by the strong Huygens principle (Lemma~\ref{jl:lem:Huygens}), we only need control the right-hand side of \eqref{eq:RHS.of.cutoff.v} in the set $D_{T,X}\cap \{(t,x): 1\leq t-r \leq 2\}$. In this region, we can trade $r$ decay for $T$ growth so that by \eqref{eq:est.RHS.of.cutoff.v.1}, we have
    \begin{equation}\label{eq:est.RHS.of.cutoff.v.2}
        |\hbox{RHS of \eqref{eq:RHS.of.cutoff.v}}| \ls_N \ep^2 \brk{r}^{-2-\ep} \brk{T}^{(C_{|I|}+1)\ep} + \eps\brk{r}^{-\f 52+C_{|I|} \ep}.
    \end{equation}
    We now apply Proposition~\ref{jl:prop:invert.wave} separately to each term of \eqref{eq:est.RHS.of.cutoff.v.2}. For the first term, we apply it with $(B,\alp,\eta_0) = (\ep^2 \brk{T}^{(C_{|I|}+1)\ep}, 1+\ep, 1)$, while for the second term, we apply it with $(B,\alp,\eta_0) = (\ep, \f 32-C_{|I|} \ep, 1)$. By Proposition~\ref{jl:prop:invert.wave}, we then obtain
    \begin{equation}
        \begin{split}
            |\chi_{>2}(T-|X|) Z^I v(T,X)| \ls_N &\: \min\{ \ep^2 \brk{T}^{(C_{|I|}+1)\ep} \brk{T-|X|}^{-1-\ep}, \ep \brk{T}^{(C_{|I|}+1)\ep} \brk{|X|}^{-1} \brk{T-|X|}^{-\ep}\} \\
            &\: + \min\{ \ep \brk{T-|X|}^{-\f 32+C_{|I|} \ep}, \ep  \brk{|X|}^{-1} \brk{T-|X|}^{-\f 12+C_{|I|} \ep}\} \\
            \ls_N &\: \ep \brk{T}^{-1+(C_{|I|}+1)\ep},
        \end{split}
    \end{equation}
    which implies the desired bound after relabelling the constant $C_{|I|}+1$. \qedhere
\end{proof}

\subsection{Estimates for the comparison solution $v$ in the region $\{\f t2 \leq r \leq t\}$}

We next would like to show that $(\rd_t + \rd_r)(rv)(t,x)$ is close to $\lim_{q\to -\infty} \f{\eps^2}{t} (\rd_s U_I)(\eps\ln t-\delta,q,x/|x|;\eps)$ in the region $\f t2 \leq r \leq t$; see Lemma~\ref{lem:dvrv.in.wave.zone}. The proof has two steps (at least for the harder case where all the $Z$'s are weighted vector field. In the first step, we compute the characteristic data for $Zv$ on the cone $\{t = r\}$ (Lemma~\ref{lem:data.v.commuted}). In the second step, we use Step~1 and consider the wave equation as a transport equation to obtain the desired bound.

We now begin the first step and compute the characteristic initial data for $Z^I v$ on $\{t=r\}$. For this purpose, it is convenient to start with a general lemma concerning the characteristic initial data for the commuted equation for the weighted commutators $Z \in \{ S, \Omg_{ij}, \Omg_{0i}\}$. Notice that we only consider the weighted commutators since they are tangential to the cone $\{t=r\}$.

\begin{lemma}\label{lem:data.differentiate.once}
    Suppose $\phi$ is a solution to $\Box_\bfm\phi = 0$ with characteristic data $$r\phi\restriction_{t=r}(t,t\omg) = f(t,\omg).$$

    Then $Z\phi$ has characteristic data
    \begin{equation}\label{eq:commuted.data.weighted.Z}
        r Z\phi\restriction_{t=r}(t,t\omg) = \begin{cases}
        t \rd_t f(t,\omg)  - f(t,\omg) & \hbox{if $Z = S$} \\
        \omg_i \rd_{\omg_j} f(t,\omg) - \omg_j \rd_{\omg_i} f(t,\omg)  & \hbox{if $Z = \Omg_{ij}$} \\
        t\omg_i \rd_t f(t,\omg) + \rd_{\omg_i} f(t,\omg) - \omg_i f(t,\omg) & \hbox{if $Z = \Omg_{0i}$}.
    \end{cases}.
    \end{equation}

\end{lemma}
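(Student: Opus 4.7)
The plan is to reduce the computation to a direct application of the product rule $r Z \phi = Z(r\phi) - (Zr)\phi$ on the cone $\{t=r\}$. Since $r\phi\restriction_{t=r} = f(t,\omg)$ and hence $\phi(t, t\omg) = t^{-1} f(t,\omg)$, it suffices to compute $Z(r\phi)\restriction_{t=r}$ for each of the three weighted vector fields.

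The key structural observation I would rely on is that $S$, $\Omg_{ij}$, and $\Omg_{0i}$ are all tangent to the cone $\{t = r\}$: indeed $S(r-t) = r-t$ vanishes on the cone, $\Omg_{ij}(r-t) = 0$ identically, and $\Omg_{0i}(r-t) = -x^i + t\omg_i$ vanishes when $r = t$. Consequently, for any smooth extension of $f$ off the cone, $Z$ applied to the extension and then restricted to the cone gives the same answer; in particular I can replace $r\phi$ by the 0-homogeneous-in-$x$ extension $\td{G}(t, x) := f(t, x/|x|)$ when evaluating $Z(r\phi)\restriction_{t=r}$. Under the paper's convention $\rd_{\omg_i} f = \rd_{x^i} f(x/|x|)\restriction_{\bbS^2}$, this extension satisfies $\omg_j \rd_{\omg_j} f = 0$, which will be used repeatedly.

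Now the three cases follow by direct calculation. In radial coordinates $S = t\rd_t + r \rd_r$, and $r\rd_r$ annihilates $\td{G}$, so $S\td{G}\restriction_{(t,t\omg)} = t\rd_t f$; combined with $Sr = r = t$ on the cone and $\phi = f/t$ there, this gives $rS\phi\restriction_{t=r} = t\rd_t f - f$. For $\Omg_{ij}$, a standard chain-rule computation with $\rd_k(x^\ell/|x|) = (\dlt_{k\ell} - \omg_k \omg_\ell)/r$ shows $\Omg_{ij}\td{G} = \omg_i \rd_{\omg_j} f - \omg_j \rd_{\omg_i} f$, and since $\Omg_{ij} r = 0$, the formula follows. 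For $\Omg_{0i}$, the same chain rule gives
\begin{equation*}
\Omg_{0i}\td{G}\restriction_{(t,t\omg)} = t\omg_i \rd_t f + (\dlt_{ij} - \omg_i \omg_j) \rd_{\omg_j} f = t\omg_i \rd_t f + \rd_{\omg_i} f,
\end{equation*}
where the last equality uses $\omg_j \rd_{\omg_j} f = 0$; combining with $\Omg_{0i} r = t x^i /r = t\omg_i$ on the cone and $\phi = f/t$ yields $r\Omg_{0i}\phi\restriction_{t=r} = t\omg_i \rd_t f + \rd_{\omg_i} f - \omg_i f$.

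There is no genuine obstacle here; the only point requiring care is the bookkeeping for the angular derivative convention, which enters cleanly via $\omg_j \rd_{\omg_j} f = 0$ for the 0-homogeneous extension. No use of the wave equation itself is needed because all three vector fields are tangent to the characteristic hypersurface.
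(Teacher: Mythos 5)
Your proof is correct and follows essentially the same approach as the paper: both arguments exploit that $S$, $\Omg_{ij}$, $\Omg_{0i}$ are tangent to the cone $\{t=r\}$ and reduce the computation to $rZ\phi = Z(r\phi) - (Zr)\phi$; where the paper decomposes each $Z$ into $(\rd_t+\rd_r)$, $(\rd_t-\rd_r)$, and angular parts and observes that the $(\rd_t-\rd_r)$ coefficients vanish on the cone, you instead replace $r\phi$ by the $0$-homogeneous extension $\td{G}$ and compute directly, using Euler's theorem $\omg_j\rd_{\omg_j}f = 0$ in place of the paper's identity $(\rd_t+\rd_r)(r\phi)\restriction_{t=r}=\rd_t f$ and the formula $\rd_{\omg_i} = r(\rd_{x^i} - \omg_i\rd_r)$. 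The packaging differs slightly but the underlying observation and computations are the same.
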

\begin{proof}
    We first observe that
    \begin{align}
        rS\phi =  S(r\phi) - r\phi,\quad
        r \Omg_{ij} \phi =  \Omg_{ij}(r \phi), \quad
        r \Omg_{0i} \phi = \Omg_{0i}(r\phi) - \f {t \omg_i}r (r\phi),
    \end{align}
    and
    \begin{equation}
        \begin{split}
            S = \f 12 (t+r)(\rd_t + \rd_r) + \f 12(t-r)(\rd_t - \rd_r),\quad \Omg_{ij} = x^i \rd_{x^j} - x^j \rd_{x^i},\\
            \Omg_{0i} =  t (\rd_{x^i} - \f{x^i}r \rd_r) + \f {x^i}2 (1+ \f tr) (\rd_t + \rd_r) + \f {x^i}2 (1-\f tr)(\rd_t - \rd_r).
        \end{split}
    \end{equation}
    Then the conclusion follows from combining the above identities, setting $t = r$, noting $(\rd_t + \rd_r)(r\phi)\restriction_{t=r}(t,t\omg) = \rd_t f(t,\omg)$ and recalling that $\rd_{\omg_i} = r(\rd_{x^i} - \f {x^i}r \rd_r)$. \qedhere
\end{proof}

The above lemma can be rephrased as follows in a form that will be used below.

\begin{lemma}\label{lem:data.differentiate.once.to.use}
    Suppose $\phi$ is a solution to $\Box_\bfm\phi = 0$ with characteristic data $$(\rd_t + \rd_r)(r\phi)\restriction_{t=r}(t,t\omg) = f(t,\omg).$$

    Then $Z\phi$ has characteristic data
    \begin{equation}\label{eq:commuted.data.weighted.Z}
        (\rd_t + \rd_r) (r Z\phi)\restriction_{t=r}(t,t\omg) = \begin{cases}
        t \rd_t f(t,\omg)  & \hbox{if $Z = S$} \\
        \omg_i \rd_{\omg_j} f(t,\omg) - \omg_j \rd_{\omg_i} f(t,\omg)  & \hbox{if $Z = \Omg_{ij}$} \\
        t\omg_i \rd_t f(t,\omg) + \rd_{\omg_i} f(t,\omg)  & \hbox{if $Z = \Omg_{0i}$}.
    \end{cases}.
    \end{equation}
\end{lemma}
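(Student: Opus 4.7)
The plan is to deduce this lemma directly from the previous one (Lemma~\ref{lem:data.differentiate.once}) by differentiating in $t$. The key observation is that for any smooth function $h(t,x)$ on $\bbR^{1+3}$, if we parametrize the cone $\{t=r\}$ by $(t,\omega) \in [0,\infty) \times \bbS^2$ via $(t,\omega) \mapsto (t,t\omega)$, then
\begin{equation*}
\rd_t \bigl[ h(t,t\omega) \bigr] = (\rd_t h)(t,t\omega) + \omega^i (\rd_{x^i} h)(t,t\omega) = (\rd_t + \rd_r)h \bigr|_{t=r}(t,t\omega),
\end{equation*}
since $\omega^i \rd_{x^i} = \rd_r$ on the cone. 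In particular, writing $f(t,\omega) := r\phi\restriction_{t=r}(t,t\omega)$, the hypothesis of the present lemma is equivalent to
\begin{equation*}
\rd_t f(t,\omega) = (\rd_t + \rd_r)(r\phi) \bigr|_{t=r}(t,t\omega) = f_{\mathrm{new}}(t,\omega),
\end{equation*}
where I temporarily write $f_{\mathrm{new}}$ for the input data $f$ of Lemma~\ref{lem:data.differentiate.once.to.use}, to distinguish it from the $f$ of Lemma~\ref{lem:data.differentiate.once}.

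With this in hand, I would simply apply $\rd_t$ (in the parametrization $(t,\omega)$) to each of the three formulas in \eqref{eq:commuted.data.weighted.Z} of Lemma~\ref{lem:data.differentiate.once}, using the above identity applied to $h = rZ\phi$ on the left-hand side. For $Z = S$, one computes
\begin{equation*}
\rd_t \bigl( t \rd_t f - f \bigr) = \rd_t f + t \rd_t^2 f - \rd_t f = t\, \rd_t f_{\mathrm{new}},
\end{equation*}
which matches the first line of the desired formula. For $Z = \Omega_{ij}$, since $\omega$ is independent of $t$, $\rd_t$ commutes with the angular operator and gives $\omega_i \rd_{\omega_j} f_{\mathrm{new}} - \omega_j \rd_{\omega_i} f_{\mathrm{new}}$. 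For $Z = \Omega_{0i}$, one obtains
\begin{equation*}
\rd_t \bigl( t\omega_i \rd_t f + \rd_{\omega_i} f - \omega_i f \bigr) = \omega_i \rd_t f + t\omega_i \rd_t^2 f + \rd_{\omega_i} \rd_t f - \omega_i \rd_t f = t \omega_i \rd_t f_{\mathrm{new}} + \rd_{\omega_i} f_{\mathrm{new}},
\end{equation*}
matching the third line.

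There is essentially no obstacle here: the only mildly delicate point is the bookkeeping of which derivative ($\rd_t$ in the cone-parametrization versus $\rd_t + \rd_r$ in spacetime) one is taking, but this is settled by the identity in the first paragraph. In particular one should note the cancellation of the $-f$ term in the $S$ case and the $-\omega_i f$ term in the $\Omega_{0i}$ case, which is what makes the rephrased formulas depend only on the input data $f_{\mathrm{new}} = (\rd_t + \rd_r)(r\phi)\restriction_{t=r}$ and not separately on $r\phi\restriction_{t=r}$.
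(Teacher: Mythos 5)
Your proposal is correct and follows essentially the same approach as the paper, which merely states that the lemma is "an easy computation starting with Lemma~\ref{lem:data.differentiate.once}" with $f$ renamed. The key observation you make explicit — that $\rd_t$ in the cone parametrization $(t,\omega)\mapsto(t,t\omega)$ agrees with $(\rd_t+\rd_r)$ restricted to $\{t=r\}$ — is exactly the bookkeeping device implicit in the paper's terse proof, and your three case-by-case calculations are correct.
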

\begin{proof}
    This is an easy computation starting with Lemma~\ref{lem:data.differentiate.once}. (Note that we have renamed $f$.) \qedhere
\end{proof}

Before we apply Lemma~\ref{lem:data.differentiate.once.to.use} to compute the asymptotic data for $Z^I v$, we first carry out a simple computation concerning the asymptotic profiles $U_I$:
\begin{lemma}\label{lem:UI.derivative.transform} For any multiindex $I$, we have
    \begin{align}
\lim_{q\to -\infty} \rd_s U_I(s,q,\omega;\eps)=\left\{
\begin{array}{ll}
\lim_{q\to -\infty}(\eps\partial_s^2U_{I'}- \rd_s U_{I'}), & Z^I=SZ^{I'};  \\
\lim_{q\to -\infty}(\omega_i\partial_{\omega_j}-\omega_j\partial_{\omega_i})\rd_s U_{I'}, & Z^I=\Omega_{ij}Z^{I'},\ 1\leq i<j\leq 3;\\
\lim_{q\to -\infty}(\eps\omega_i\partial_s^2 U_{I'}+\partial_{\omega_i} \rd_s U_{I'}-\omega_i \rd_s U_{I'}), & Z^I=\Omega_{0i}Z^{I'},\ 1\leq i\leq 3.
\end{array}\right.
\end{align}
\end{lemma}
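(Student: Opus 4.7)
The plan is to differentiate the recursive definition \eqref{eq:UI.def} of $U_I$ in $s$, identify an extra $q\partial_q$ term in the $S$ and $\Omega_{0i}$ cases, and show it vanishes in the $q\to-\infty$ limit via the decay estimate \eqref{eq:UI.est.1}. Write $Z^I = Z Z^{I'}$ with $|I'| = |I|-1$. Applying $\partial_s$ to the three weighted cases of \eqref{eq:UI.def} and using that $\partial_s$ commutes with $\partial_q$, $\partial_{\omega_i}$, and multiplication by $q$, $\omega_i$, yields
\begin{align*}
\partial_s U_I &= \eps\partial_s^2 U_{I'} + q\partial_q\partial_s U_{I'} - \partial_s U_{I'}, && Z^I = S Z^{I'},\\
\partial_s U_I &= (\omega_i\partial_{\omega_j}-\omega_j\partial_{\omega_i})\partial_s U_{I'}, && Z^I = \Omega_{ij}Z^{I'},\\
\partial_s U_I &= \eps\omega_i\partial_s^2 U_{I'} - q\omega_i\partial_q\partial_s U_{I'} + \partial_{\omega_i}\partial_s U_{I'} - \omega_i\partial_s U_{I'}, && Z^I = \Omega_{0i}Z^{I'}.
\end{align*}
Comparing with the right-hand side of the lemma, in the $\Omega_{ij}$ case the identity already holds without taking any limit, while in the $S$ and $\Omega_{0i}$ cases the statement amounts to
\begin{equation*}
\lim_{q\to -\infty} q\,\partial_q\partial_s U_{I'}(s,q,\omega;\eps) = 0
\end{equation*}
for each fixed $(s,\omega)$.

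To establish this limit, I would invoke the bound \eqref{eq:UI.est.1} applied to $U_{I'}$ with $(a,b,c) = (0,1,0)$: for $\eps \ll_{|I|} 1$ one has
\begin{equation*}
\partial_q\partial_s U_{I'}(s,q,\omega;\eps) = O_{|I|}\bigl(\brk{q}^{-2 + C_{|I|}\eps}\, e^{C_{|I|} s}\bigr),
\end{equation*}
so that $|q\,\partial_q\partial_s U_{I'}| \lesssim_{|I|} \brk{q}^{-1 + C_{|I|}\eps}\, e^{C_{|I|} s}$, which tends to $0$ as $q \to -\infty$ once $\eps$ is small enough that $C_{|I|}\eps < 1$. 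Taking $q \to -\infty$ in the three displayed identities above then yields the lemma.

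There is essentially no obstacle here: the content of the lemma is just that, up to terms decaying at $q = -\infty$, differentiation in $s$ commutes with the generators of $U_I$ from $U_{I'}$. The only small point to be careful about is that the limits appearing on the right-hand side of the lemma statement exist in the first place, but this is immediate from \eqref{eq:UI.est.1}--\eqref{eq:UI.est.2} applied to $U_{I'}$ and the ensuing pointwise bounds for $\partial_s U_{I'}$, $\partial_s^2 U_{I'}$, and their angular derivatives.
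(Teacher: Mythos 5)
Your proposal is correct and takes essentially the same approach as the paper: differentiate the recursive definition \eqref{eq:UI.def} in $s$, observe the extra $q\partial_q\partial_s U_{I'}$ (resp.\ $q\omega_i\partial_q\partial_s U_{I'}$) terms in the $S$ and $\Omega_{0i}$ cases, and show they vanish as $q\to-\infty$ by \eqref{eq:UI.est.1}. You are only slightly more explicit than the paper in spelling out the exponents $(a,b,c)=(0,1,0)$ and the resulting $\brk{q}^{-1+C\eps}$ decay.
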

\begin{proof}
   Starting with \eqref{eq:UI.def} and differentiating, we obtain
    \begin{align}
\rd_s U_I(s,q,\omega;\eps)=\left\{
\begin{array}{ll}
\eps\partial_s^2U_{I'}+q\partial_q\rd_sU_{I'}-\rd_sU_{I'}, & Z^I=SZ^{I'};  \\
(\omega_i\partial_{\omega_j}-\omega_j\partial_{\omega_i})\rd_sU_{I'}, & Z^I=\Omega_{ij}Z^{I'},\ 1\leq i<j\leq 3;\\
\eps\omega_i\partial_s^2U_{I'}-q\omega_i\partial_q \rd_s U_{I'}+\partial_{\omega_i} \rd_s U_{I'}-\omega_i \rd_s U_{I'}, & Z^I=\Omega_{0i}Z^{I'},\ 1\leq i\leq 3.
\end{array}\right.
\end{align}

    We then take the $q\to -\infty$ limit. Note that the terms $\lim_{q\to -\infty} q\partial_q\rd_sU_{I'}$ and $\lim_{q\to -\infty}  q\omega_i\partial_q \rd_s U_{I'}$ both vanish thanks to \eqref{eq:UI.est.1}. We then obtain the desired conclusion. \qedhere
\end{proof}

We now compute the characteristic initial data for $Z^I v$:
\begin{lemma}\label{lem:data.v.commuted}
    Suppose $Z^I$ contains only weighted vector fields. Then for $t \geq 10$,
    \begin{equation}\label{eq:data.v.commuted}
        (\rd_t + \rd_r) (rZ^I v)\restriction_{t=r}(t,t\omg) = \lim_{q\to -\infty} \f{\eps^2}{t} (\rd_s U_I)(\eps\ln t-\delta,q,\omg;\eps).
    \end{equation}
\end{lemma}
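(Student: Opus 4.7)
The plan is to proceed by induction on $|I|$, with the base case $|I| = 0$ being a direct unpacking of definitions and the inductive step following from combining Lemma~\ref{lem:data.differentiate.once.to.use} (which tells us how characteristic data transforms under commutation with a weighted vector field) with Lemma~\ref{lem:UI.derivative.transform} (which tells us how $\lim_{q\to -\infty} \rd_s U_I$ relates to derivatives of $\lim_{q\to -\infty} \rd_s U_{I'}$).

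For the base case, since $t \geq 10$ we have $\chi_{>10}(t) = 1$, so the characteristic data for $v$ in \eqref{eq:v.def} reduces to $(\rd_t + \rd_r)(rv)\restriction_{t=r}(t, t\omg) = \frkL(t, \omg)/t$. Using the $\wh{A}$-formula \eqref{eq:frkL.def-Ahat} for $\frkL$ together with the definition of $\wh{U}$ in \eqref{dy:defn:whAmuU}, differentiation under the integral (justified by \eqref{dy:est:hatA}) gives
\[
\rd_s \wh{U}(s, q, \omg; \eps) = -\tfrac{1}{2} G(\omg) \int_q^\infty \wh{A}^2(p, \omg; \eps) \exp\bigl(\tfrac{1}{2} G(\omg) \wh{A}(p, \omg; \eps) s\bigr) \, \ud p,
\]
and the same estimate shows the integrand is $L^1$ in $p$ on $(-\infty, \infty)$, so the limit as $q \to -\infty$ is the full-line integral. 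Setting $s = \eps \ln t - \dlt$ and multiplying by $\eps^2/t$ recovers $\frkL(t, \omg)/t$ exactly, establishing \eqref{eq:data.v.commuted} with $I = 0$ (recall $U_0 = \wh{U}$).

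For the inductive step, write $Z^I = Z Z^{I'}$ with $Z \in \{S, \Omg_{ij}, \Omg_{0i}\}$. Applying Lemma~\ref{lem:data.differentiate.once.to.use} to $\phi = Z^{I'} v$ with $f(t, \omg) := \tfrac{\eps^2}{t} \lim_{q \to -\infty} (\rd_s U_{I'})(\eps \ln t - \dlt, q, \omg; \eps)$ (the inductive hypothesis), one must compute $t \rd_t f$, $\omg_i \rd_{\omg_j} f - \omg_j \rd_{\omg_i} f$, or $t \omg_i \rd_t f + \rd_{\omg_i} f$, respectively. The chain rule with $s = \eps \ln t - \dlt$ yields $\rd_t \to (\eps/t) \rd_s$, so e.g.~in the $Z = S$ case
\[
t \rd_t f = -\tfrac{\eps^2}{t} \lim_{q \to -\infty} \rd_s U_{I'} + \tfrac{\eps^3}{t} \lim_{q \to -\infty} \rd_s^2 U_{I'} = \tfrac{\eps^2}{t} \lim_{q \to -\infty} (\eps \rd_s^2 U_{I'} - \rd_s U_{I'}),
\]
which matches the $S$-case of Lemma~\ref{lem:UI.derivative.transform}. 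The $\Omg_{0i}$ case is analogous (with an extra $\omg_i$ and an angular derivative), and the $\Omg_{ij}$ case is immediate since $t$ is rotation-invariant. In each case the result is precisely $\tfrac{\eps^2}{t} \lim_{q \to -\infty} \rd_s U_I$, completing the induction.

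The only genuinely non-bookkeeping issue is the interchange of $\lim_{q \to -\infty}$ with $\rd_t$ and $\rd_{\omg_i}$, needed both inside the argument and implicitly in Lemma~\ref{lem:UI.derivative.transform}; this is justified by the uniform bounds \eqref{eq:UI.est.1}--\eqref{eq:UI.est.2}, which make $\rd_q \rd_s^b \rd_\omg^c U_I$ integrable in $q$ on $(-\infty, 0]$ for $b \geq 1$ (uniformly on compact sets of $s$ and $\omg$), so the limit defines a $C^\infty$ function of $(s, \omg)$ whose derivatives are the limits of the derivatives. No genuine obstacle is anticipated beyond careful bookkeeping in the chain rule.
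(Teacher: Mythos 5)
Your proof is correct and follows essentially the same route as the paper: induction on $|I|$, with the base case read off from \eqref{eq:frkL.def-Ahat} and \eqref{dy:defn:whAmuU}, and the inductive step obtained by combining Lemma~\ref{lem:data.differentiate.once.to.use} with Lemma~\ref{lem:UI.derivative.transform} after the chain-rule substitution $\rd_t \mapsto (\eps/t)\rd_s$. (The paper packages the chain-rule calculation as a separate recursion \eqref{eq:inductive.step.for.v.data} for $\widehat f_I$, but this is only a difference in bookkeeping, not in substance.)
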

\begin{proof}
    We first remark the limit on the right-hand side of \eqref{eq:data.v.commuted} is well-defined thanks to the estimate \eqref{eq:UI.est.1} and the fundamental theorem of calculus.

    Let
    $$\widehat{f}_I(t,\omg) = \lim_{q\to -\infty} \f{\eps^2}{t} (\rd_s U_I)(\eps\ln t-\delta,q,\omg;\eps).$$

    Our goal is thus to show that the data $(\rd_t + \rd_r) (rZ^I v)\restriction_{t=r}(t,t\omg)$ are given by $\widehat{f}_I(t,\omg)$ when $t \geq 10$. To achieve this, we first show in Step~1 that they agree when $|I|=0$. We then show in Step~2 that $\widehat{f}_I(t,\omg)$ obeys
    \begin{align}\label{eq:inductive.step.for.v.data}
\widehat{f}_I (t,\omg) =\left\{
\begin{array}{ll}
t \rd_t \widehat{f}_{I'}, & Z^I=SZ^{I'};  \\
\omega_i\partial_{\omega_j}\widehat{f}_{I'}-\omega_j\partial_{\omega_i}\widehat{f}_{I'}, & Z^I=\Omega_{ij}Z^{I'},\ 1\leq i<j\leq 3;\\
\omega_i t \rd_t \widehat{f}_{I'} +\partial_{\omega_i} \widehat{f}_{I'}, & Z^I=\Omega_{0i}Z^{I'},\ 1\leq i\leq 3.
\end{array}\right.
\end{align}
     The conclusion then follows inductively after using Lemma~\ref{lem:data.differentiate.once.to.use}.

    \pfstep{Step~1: The $|I|=0$} For the base case, recall from \eqref{dy:defn:whAmuU} and the definition $U_0 = \widehat{U}$ that
    $$\rd_s U_0(s,q,\omg;\ep) = \rd_s \widehat{U}(s,q,\omg;\ep) = -\int_{q}^\infty \frac{1}{2}G(\omega) \wh{A}^2(p,\omega;\eps)\exp(\frac{1}{2}G(\omega)\wh{A}(p,\omega;\eps)s) \, \ud p.$$
    The desired identity therefore follows from the definition of $v$ in Definition~\ref{def:comparison} (recall also from Remark~\ref{rem:frkL} that \eqref{eq:frkL.def-Ahat} is equivalent to \eqref{eq:frkL.def}).

    \pfstep{Step~2: Proof of \eqref{eq:inductive.step.for.v.data}} Observe that
    \begin{equation}
        \begin{split}
            &\: \rd_t \Big( \f{\eps^2}{t} (\rd_s U_I)(\eps\ln t-\delta,q,x/|x|;\eps) \Big) \\
            = &\: -  \f{\eps^2}{t^2} (\rd_s U_I)(\eps\ln t-\delta,q,x/|x|;\eps) +  \f{\eps^3}{t^2} (\rd_s^2 U_I)(\eps\ln t-\delta,q,x/|x|;\eps).
        \end{split}
    \end{equation}
    Thus
    \begin{equation}
        \lim_{q\to -\infty} \f{\eps^3}{t^2} (\rd_s^2 U_I)(\eps\ln t-\delta,q,x/|x|;\eps) = \rd_t \widehat{f}_I + \f 1t \widehat{f}_I.
    \end{equation}

    By Lemma~\ref{lem:UI.derivative.transform}, we therefore have \eqref{eq:inductive.step.for.v.data}, as desired. \qedhere
\end{proof}

\begin{lemma}\label{lem:dvrv.in.wave.zone}
    $(\rd_t + \rd_r)(rv)$ satisfies the following estimates for $|I|\leq N$ and $\ep \ll_N 1$:
    \begin{equation}
        \begin{split}
            &\: \Big| (\rd_t + \rd_r)(rZ^I v(t,x)) -\lim_{q\to -\infty} \f{\eps^2}{t} (\rd_s U_I)(\eps\ln t-\delta,q,x/|x|;\eps) \Big| \\
            \ls_N &\: \ep \brk{r}^{-2+C_N \ep} \brk{t-r} + \ep \brk{r}^{-1+C_N \ep} \brk{t-r}^{-1},\quad \hbox{when $\f t2 \leq r\leq t$}.
        \end{split}
    \end{equation}
\end{lemma}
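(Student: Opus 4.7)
The strategy is to treat $G_I:=(\rd_t+\rd_r)(rZ^Iv)$ as the solution of a first-order transport equation along $\rd_t-\rd_r$ whose characteristic data on $\{t=r\}$ has already been identified in Lemma~\ref{lem:data.v.commuted}. Since $\Box_\bfm Z^Iv=0$ and $r\Box_\bfm\phi=-(\rd_t-\rd_r)(\rd_t+\rd_r)(r\phi)+r^{-2}\rslap(r\phi)$, we have $(\rd_t-\rd_r)G_I=r^{-2}\rslap(rZ^Iv)$. With $t_0:=(t+r)/2$, integrating along the $\rd_t-\rd_r$ characteristic parametrized by $\lambda\in[0,(t-r)/2]$ (so $(t(\lambda),r(\lambda))=(t_0+\lambda,t_0-\lambda)$, reaching $(t,x)$ at $\lambda=(t-r)/2$) and writing $T(t,\omg):=\lim_{q\to-\infty}(\eps^2/t)(\rd_sU_I)(\eps\ln t-\delta,q,\omg;\eps)$, we obtain
\begin{equation*}
G_I(t,x)-T(t,\omg)=\bigl[G_I|_{t=r}(t_0,t_0\omg)-T(t,\omg)\bigr]+\int_0^{(t-r)/2}\tfrac{1}{r^2}\rslap(rZ^Iv)\,\ud\lambda.
\end{equation*}

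\emph{Case 1: $Z^I$ consists only of weighted vector fields.} Lemma~\ref{lem:data.v.commuted} identifies the boundary value with $T(t_0,\omg)$, so the bracket becomes $T(t_0)-T(t)$. A direct computation gives $\rd_tT=\lim_{q\to-\infty}(-\eps^2t^{-2}\rd_sU_I+\eps^3t^{-2}\rd_s^2U_I)$, and the uniform bounds $|\rd_s^bU_I|\lesssim e^{C_bs}=t^{C_b\eps}$ for $b\geq1$ from \eqref{eq:UI.est.2} yield $|\rd_tT|\lesssim\eps^2 t^{-2+C\eps}$, hence $|T(t_0)-T(t)|\lesssim\eps^2\brk{r}^{-2+C\eps}(t-r)$ by the mean value theorem. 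For the integral, $|\rslap(rZ^Iv)|\lesssim r|Z^{\leq|I|+2}v|\lesssim\eps r\brk{t}^{-1+C\eps}$ by Lemma~\ref{lem:v.vector.fields}, so the integrand is $\lesssim\eps\brk{r}^{-2+C\eps}$; since $t(\lambda),r(\lambda)$ remain comparable to $r$ throughout the segment of length $(t-r)/2$, the integral contributes at most $\eps\brk{r}^{-2+C\eps}(t-r)$. Summing yields the first claimed term.

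\emph{Case 2: $Z^I$ contains at least one unweighted $\rd_\alpha$.} Then $U_I\equiv0$ by~\eqref{eq:UI.def}, so $T\equiv0$ and it suffices to show $|G_I|\lesssim\eps\brk{r}^{-1+C\eps}\brk{t-r}^{-1}$. From $S=t\rd_t+x^i\rd_i$ and $\omg^i\Omg_{0i}=r\rd_t+t\rd_r$ we have $S+\omg^i\Omg_{0i}=(t+r)(\rd_t+\rd_r)$; combined with $Sr=r$ and $\omg^i\Omg_{0i}r=t$, this gives the pointwise identity
\begin{equation*}
G_I=\tfrac{r}{t+r}(S+\omg^i\Omg_{0i})Z^Iv+Z^Iv.
\end{equation*}
Both summands are of the form $Z^Jv$ for a vector field product $Z^J$ still containing at least one $\rd_\alpha$. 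Writing $Z^J=W_1\rd_\alpha W_2$ where $W_1$ is the weighted prefix before the first unweighted derivative, and using $\rd_\alpha=(tS-x^j\Omg_{0j})/(t^2-r^2)$, a Leibniz expansion reduces matters to pointwise bounds on $W_1^{(l)}(t,x^j)$, $W_1^{(l)}(t^2-r^2)^{-1}$, and weighted derivatives of $W_2v$. The key point is that $\Omg_{ij}$ and $\Omg_{0i}$ annihilate $t^2-r^2$ while $S(t^2-r^2)=2(t^2-r^2)$, so iterated weighted vector fields applied to $(t^2-r^2)^{-1}$ remain $O((t^2-r^2)^{-1})$; together with the trivial bounds $|W_1^{(l)}(t,x^j)|\lesssim t$ and Lemma~\ref{lem:v.vector.fields}, this yields $|Z^Jv|\lesssim t(t^2-r^2)^{-1}\cdot\eps\brk{r}^{-1+C\eps}\lesssim\eps\brk{r}^{-1+C\eps}\brk{t-r}^{-1}$, giving the second claimed term.

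The main technical hurdle lies in Case~2: one must verify that the weighted prefix $W_1$ produces only bounded multiplicative errors when commuted through the $1/(t^2-r^2)$ factor, without amplifying it to higher negative powers of $\brk{t-r}$. This rests on the algebraic identities $S(t^2-r^2)=2(t^2-r^2)$ and $\Omg_{ij}(t^2-r^2)=\Omg_{0i}(t^2-r^2)=0$, which are precisely what makes the weighted vector field method compatible with the inversion $\rd_\alpha=(tS-x^j\Omg_{0j})/(t^2-r^2)$.
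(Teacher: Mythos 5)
Your proposal follows essentially the same two-case structure as the paper's proof (the two cases appear in the opposite order, but that is immaterial). Your Case~1 argument — integrating the transport equation along $\rd_t - \rd_r$ from the boundary data in Lemma~\ref{lem:data.v.commuted}, bounding the angular-Laplacian source by Lemma~\ref{lem:v.vector.fields}, and controlling the shift from $T(t_0,\omg)$ to $T(t,\omg)$ by computing $\rd_t T$ and applying the mean value theorem — is correct and is an equivalent reformulation of the paper's bookkeeping (the paper instead estimates $\abs{\f{2}{t+\abs{x}} - \f{1}{t}}$ and $\abs{\ln\f{t+\abs{x}}{2} - \ln t}$ separately and combines them with \eqref{eq:UI.est.1}--\eqref{eq:UI.est.2}).

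In Case~2 there is a genuine, though easily repairable, gap. Your chain of inequalities culminates in
\begin{equation*}
\abs{Z^J v} \aleq t\,(t^2-r^2)^{-1}\cdot \eps\brk{r}^{-1+C\eps} \aleq \eps\brk{r}^{-1+C\eps}\brk{t-r}^{-1},
\end{equation*}
but the second inequality, which amounts to $t(t^2-r^2)^{-1}\aleq\brk{t-r}^{-1}$, is false whenever $t-r < 1$; indeed the intermediate bound $t(t^2-r^2)^{-1}\cdot\eps\brk{r}^{-1+C\eps}$ diverges as $t\to r$, while both $Z^J v$ and the target bound remain finite. The identities $S(t^2-r^2)=2(t^2-r^2)$, $\Omg_{ij}(t^2-r^2)=\Omg_{0i}(t^2-r^2)=0$ that you invoke control the iterated action of weighted vector fields on $(t^2-r^2)^{-1}$, but they do not address the singularity of $(t^2-r^2)^{-1}$ itself on $\set{t=r}$. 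You should either treat the regime $\abs{t-r}\leq 1$ separately (there $\brk{t-r}\sim 1$, so the required bound follows directly from Lemma~\ref{lem:v.vector.fields} without any conversion of $\rd_\alpha$), or, more cleanly, invoke Lemma~\ref{lem:vector.fields} as stated, whose equivalence is formulated with $\brk{t-r}$ rather than $\abs{t-r}$ precisely so that the conversion is harmless near the light cone — this is what the paper does in one line. A smaller omission: Lemma~\ref{lem:data.v.commuted} identifies the boundary value only for $t_0\geq 10$; for $t_0<10$ one is in a bounded region where the estimate holds trivially, but this should be said.
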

\begin{proof}
    \pfstep{Step~1: When $Z^I$ consists of at least one unweighted vector field} In this case, $U_I \equiv 0$ by \eqref{eq:UI.def}. We are thus reduced to proving decay for $(\rd_t + \rd_r)( rZ^I v)$. Now because at least one of the vector field is unweighted, we can convert it to a weighted vector field using \eqref{eq:unweighted.in.terms.of.weighted.1} to gain a factor of $\brk{t-r}^{-1}$. Thus, by \eqref{lem:v.vector.fields}, we have
    $$|(\rd_t + \rd_r)( rZ^I v)|(t,x)\ls \ep \brk{r}^{-1+C_N \ep} \brk{t-r}^{-1},\quad \hbox{when $\f t2 \leq r\leq t$},$$
    which is what we need to prove.

    \pfstep{Step~2: When all vector fields in $Z^I$ are weighted vector fields} In this case, we use the wave equation for $Z^I v$ together with the information about the data in Lemma~\ref{lem:data.v.commuted}.

    Since $\Box_\bfm v = 0$, we have $\Box_\bfm Z^I v = 0$ and thus \eqref{eq:wave4ZIv} holds.
    By Lemma~\ref{lem:v.vector.fields} (as well as the fact that if $r \geq \f t2$, then $r \geq \f {10}{3}$ on the support of $v$), we have $|\f 1{r^2} \rslap (r Z^I v)| \ls \ep \brk{r}^{-2+C_{|I|}\ep}$ when $\f t2 \leq r \leq t$. Plugging this bound into \eqref{eq:wave4ZIv}, we obtain
    \begin{equation}\label{eq:wave.rZIv}
        |(\rd_t - \rd_r)(\rd_t +\rd_r)(rZ^Iv)|(t,x) \ls \ep \brk{r}^{-2+C_{|I|}\ep}.
    \end{equation}
    Using Lemma~\ref{lem:data.v.commuted}, we now integrate \eqref{eq:wave.rZIv} in the direction of $\rd_t - \rd_r$ starting at $\{t=r\}$. This yields the estimate
    $$\Big| (\rd_t + \rd_r)\Big( rZ^I v(t,x) -\lim_{q\to -\infty} \f{2\eps^2}{t+|x|} (\rd_s U_I)(\eps\ln \f{t+|x|}2-\delta,q,x/|x|;\eps) \Big)  \Big|
            \ls_N \ep \brk{r}^{-2+C_N \ep} \brk{t-r} .$$
    Finally, observing that $|\f 2{t+|x|} - \f 1 t|\ls \brk{r}^{-2} \brk{t-r}$ and $|\ln \f{t+|x|}2 - \ln t| \ls \brk{r}^{-1}\brk{t-r}$ (when $\f t2 \leq r \leq t$), and using \eqref{eq:UI.est.1}--\eqref{eq:UI.est.2} (and the mean value theorem), we obtain the desired estimate.\qedhere
\end{proof}

\subsection{Proof of the main asymptotic formula}

In this subsection, we prove the main asymptotic formula stated in Theorem~\ref{thm:main}. In order to control $u-v$, we show that $\Box_\bfm (u-v)$ also exhibits improved estimates, and that $u-v$ exhibits improved decay near the wave zone. We start with estimating $\Box_\bfm (u-v) = \Box_\bfm u$ (Lemma~\ref{lem:Boxu.est}) and then turn to estimating $(\rd_t + \rd_r)(rZ^I (u-v))(t,x)$ in the wave zone (Lemma~\ref{lem:dvru.in.wave.zone}). The main estimate will then be carried out in Propositions~\ref{jl:prop:main.terms.for.cutoff} and Proposition~\ref{prop:main.error}.

\begin{lemma}\label{lem:Boxu.est}
    The following estimates hold:
    \begin{equation}
        \Big| Z^{\leq N} (\Box_{\bfm} u)\Big| \ls_N
            \ep^2 \brk{t}^{-2+C_N\ep} \brk{t-r}^{-2}.
    \end{equation}
\end{lemma}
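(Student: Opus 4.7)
The plan is to rewrite the equation in the form
$$\Box_\bfm u = -h^{\alp\bt}(u)\,\rd^2_{\alp\bt} u, \qquad h^{\alp\bt}(u) := g^{\alp\bt}(u) - \bfm^{\alp\bt},$$
which, since $h(0)=0$, satisfies $|h(u)|\ls |u|$. I would then apply $Z^{\leq N}$ and expand by Leibniz into a finite sum of products $(Z^{K}h(u))\cdot (Z^{L}\rd^2 u)$ with $K+L \leq N$, reducing the claim to an upper bound of $\ep^{2} \brk{t}^{-2+C_N\ep}\brk{t-r}^{-2}$ on each such product. By finite propagation speed, $u$ and hence $\Box_\bfm u$ vanish on $\{r-t \geq \radius\}$, so all estimates need only be established on the support of $u$.

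For the first factor, the chain rule expands $Z^{K} h(u)$ into a sum of terms of the form $\Psi(u)\prod_{k} Z^{K_k}u$ with $\Psi$ smooth and uniformly bounded for small $u$, indices satisfying $K_1 + K_2 + \cdots \leq K$, and at least one factor of $u$ present. Inserting the Lindblad pointwise bound \eqref{dy:est:ptw:lindblad} (applied with $N_0 \geq N+2$) immediately yields
$$|Z^{\leq K} h^{\alp\bt}(u)| \ls_K \ep\,\brk{t}^{-1+C_K \ep}.$$

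The essential gain of $\brk{t-r}^{-2}$ comes from the second factor. I would commute the $Z$'s past $\rd^2$ using $[Z,\rd] = c\,\rd$, reducing $Z^{L}\rd^2 u$ to a linear combination of $\rd^2 Z^{L'} u$ with $L'\leq L$. Lemma~\ref{lem:vector.fields} (equivalently, iterating the explicit identities \eqref{eq:unweighted.in.terms.of.weighted.1}--\eqref{eq:unweighted.in.terms.of.weighted.2}) supplies, for any smooth $w$, a pointwise bound $|\rd^2 w|\ls \brk{t-r}^{-2}|Z^{\leq 2}w|$ uniformly on $\{r\leq t+\radius\}$: in $r\geq t/2$ this is the direct statement, while in $r\leq t/2$ it follows from $t^2|\rd^2 w|\ls |Z^{\leq 2}w|$ together with $\brk{t-r}\sim \brk{t}$ in the near zone. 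Substituting $w = Z^{L'}u$ and applying \eqref{dy:est:ptw:lindblad} once more produces
$$|Z^{\leq L}\rd^2 u|\ls_L \ep\,\brk{t-r}^{-2}\,\brk{t}^{-1+C_{L+2}\ep}.$$
Multiplying this against the first bound and summing over $K+L\leq N$ yields the claim after absorbing $\max_{K+L\leq N}(C_K + C_{L+2})$ into a new $C_N$. I do not anticipate any substantive obstacle: the argument is purely algebraic (Leibniz and chain rule) on top of the already-established Lindblad pointwise bounds and the standard vector-field identities, and the two factors of $\brk{t-r}^{-1}$ picked up from converting each $\rd$ into a weighted commutator mechanically generate the full $\brk{t-r}^{-2}$ weight on the right-hand side.
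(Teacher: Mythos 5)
Your proof is correct and follows essentially the same route as the paper: the paper's own one-line proof simply cites the form of the equation, the Lindblad pointwise bound \eqref{dy:est:ptw:lindblad}, Lemma~\ref{lem:vector.fields}, and the chain and product rules (deferring the written-out computation to \cite{yu2024timelike}), and you have filled in exactly these steps. The only point worth flagging is cosmetic: the pointwise inequality $|\rd^2 w|\ls\brk{t-r}^{-2}|Z^{\leq 2}w|$ is not quite the literal statement of Lemma~\ref{lem:vector.fields} (which is phrased as an equivalence between two families of weighted bounds), but, as you note, it follows directly by iterating the identities \eqref{eq:unweighted.in.terms.of.weighted.1}--\eqref{eq:unweighted.in.terms.of.weighted.2} away from $\set{t=r}$ and using the trivial bound $|\rd^2 w|\leq |Z^{\leq 2}w|$ when $\brk{t-r}\aeq 1$, so the substance is the same.
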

\begin{proof}
    Using the form of the equation, this is an easy consequence of \eqref{dy:est:ptw:lindblad}, Lemma~\ref{lem:vector.fields}, and the chain and product rule. See the first equation in \cite[Section~5]{yu2024timelike} for details. \qedhere
\end{proof}

Using Lemma~\ref{lem:dvrv.in.wave.zone} and the fact that $U_I$ is an asymptotic profile for $Z^I u$ (derived in in Section~\ref{dy:sec:wave}), we show that $u-v$ exhibits improved decay in the wave zone.
\begin{lemma}\label{lem:dvru.in.wave.zone}
For $\gamma \in (0,1)$, $|I|\leq N$ and $\ep \ll_{\gamma,N} 1$, the following estimate holds when $(t,x) \in \Omg \cap \{|t-r| \ls t^\gamma\}$:
    $$\Big| (\rd_t + \rd_r)(rZ^I (u-v))(t,x) \Big| \ls_{N} \ep (1 + \ln\brk{t-r}) \brk{t-r}^{-1} \brk{t}^{-1+C_{|I|}\ep} + \ep \brk{t-r} \brk{t}^{-2+C_{|I|}\ep}.$$
\end{lemma}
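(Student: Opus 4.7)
The strategy is to introduce a common asymptotic profile
\[
\Phi_I(t,x) := \lim_{q\to -\infty} \tfrac{\eps^{2}}{t}(\rd_s U_I)(\eps\ln t-\delta, q, x/|x|;\eps),
\]
whose existence is guaranteed by \eqref{eq:UI.est.1}, and decompose
\[
(\rd_t + \rd_r)(rZ^I(u-v)) = \bigl[(\rd_t + \rd_r)(rZ^I u) - \Phi_I\bigr] - \bigl[(\rd_t + \rd_r)(rZ^I v) - \Phi_I\bigr].
\]
The second bracket is already bounded by Lemma~\ref{lem:dvrv.in.wave.zone} (noting $r\sim t$ in the wave zone), so the work is entirely to control the first bracket by the same RHS.

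The plan for the first bracket is to proceed in three substeps, systematically trading each $\rd_t+\rd_r$ derivative for a weighted commutator with a $1/(t+r)\sim 1/t$ gain via the identity $\rd_t+\rd_r \sim \tfrac{1}{t+r}(S + \tfrac{x^i}{r}\Omega_{0i})$ from \eqref{eq:unweighted.in.terms.of.weighted.2}. First, using $\widehat u = \tilde u$ in $\Omega$ (see \eqref{dy:eqn:tduwtu}) together with \eqref{dy:est:diffutdu} and the bound $|Z r|\lesssim t$, one replaces $u$ by $\widehat u$ at cost $\lesssim \eps\brk{t-r}\,t^{-2+C_{|I|}\eps}$ in the wave zone. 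Second, \eqref{dy:est:prop3.3:timelike} applied with one extra $Z$ derivative lets one replace $(\rd_t+\rd_r)(rZ^I\widehat u)$ by $(\rd_t+\rd_r)\bigl(\eps U_I(\eps\ln t-\delta,|x|-t, x/|x|;\eps)\bigr)$; the $1/t$ gain from the commutator identity converts the $t^{C_{|I|}\eps}$ on the RHS of \eqref{dy:est:prop3.3:timelike} into $t^{-1+C_{|I|}\eps}$, producing an error bounded by $\eps(1+\ln\brk{t-r})\brk{t-r}^{-1}t^{-1+C_{|I|}\eps} + \eps\brk{t-r}t^{-2+C_{|I|}\eps}$. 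Third, a direct chain-rule computation--using $(\rd_t+\rd_r)(\eps\ln t-\delta)=\eps/t$, $(\rd_t+\rd_r)(|x|-t)=0$, and $(\rd_t+\rd_r)(x/|x|)=0$--yields
\[
(\rd_t+\rd_r)\bigl(\eps U_I(\eps\ln t-\delta,|x|-t, x/|x|;\eps)\bigr) = \tfrac{\eps^{2}}{t}(\rd_s U_I)(\eps\ln t-\delta, |x|-t, x/|x|;\eps).
\]

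The remaining gap between this last expression and $\Phi_I$ equals $-\tfrac{\eps^{2}}{t}\int_{-\infty}^{|x|-t}(\rd_q\rd_s U_I)(\eps\ln t-\delta, q', x/|x|;\eps)\,\mathrm{d}q'$; by \eqref{eq:UI.est.1} with $(a,b,c)=(0,1,0)$, the integrand is $O(\brk{q'}^{-2+C\eps}t^{C\eps})$, which is integrable for small $\eps$ and (since $|x|-t<0$ in $\Omega$) gives $\int\lesssim \brk{t-r}^{-1+C\eps}$, so the resulting contribution is $\lesssim \eps^{2} t^{-1+C\eps}\brk{t-r}^{-1+C\eps}$, comfortably within the target. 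No conceptual obstacle appears: every ingredient--\eqref{dy:est:diffutdu}, \eqref{dy:est:prop3.3:timelike}, \eqref{eq:UI.est.1}, and Lemma~\ref{lem:dvrv.in.wave.zone}--is already in hand. The main bookkeeping challenge is ensuring the $1/t$ factor gained from the $\rd_t+\rd_r \sim \tfrac{1}{t+r}(S+\tfrac{x^i}{r}\Omega_{0i})$ identity lines up correctly in all three substeps, so that the $Z$-norm error bounds on $u-\widehat u$ and on the $\widehat u$-profile approximation translate exactly into the stated pointwise decay rather than falling short by an extra factor of $t$ or $\brk{t-r}$.
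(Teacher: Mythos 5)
Your proposal is correct and takes essentially the same route as the paper's proof: compare both $u$ and $v$ to the common profile $\Phi_I = \lim_{q\to-\infty}\tfrac{\eps^2}{t}(\rd_s U_I)$, bound the $v$-side via Lemma~\ref{lem:dvrv.in.wave.zone}, bound the $u$-side by chaining \eqref{dy:eqn:tduwtu}, \eqref{dy:est:diffutdu}, and \eqref{dy:est:prop3.3:timelike}, and close the gap between $(\rd_s U_I)$ at $q=|x|-t$ and its $q\to-\infty$ limit by integrating the $\rd_q\rd_s U_I$ bound from \eqref{eq:UI.est.1}. Your additional explanation -- converting $\rd_t+\rd_r$ into weighted vector fields via \eqref{eq:unweighted.in.terms.of.weighted.2} to gain the $1/t$ -- is exactly the implicit bookkeeping behind the paper's first displayed inequality.
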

\begin{proof}

    By \eqref{dy:est:diffutdu}, \eqref{dy:eqn:tduwtu}, and \eqref{dy:est:prop3.3:timelike},
    \begin{equation}
        \begin{split}
            &\: \Big| (\rd_t + \rd_r)(rZ^I u)(t,x) - \f {\ep^2} t (\rd_s U_I)(\ep\ln t - \de, |x|-t,x/|x|;\ep) \Big| \\
            \ls_{|I|} &\: \ep (1 + \ln\brk{t-r}) \brk{t-r}^{-1} t^{-1+C_{|I|}\ep} + \ep \brk{t-r} t^{-2+C_{|I|}\ep}
        \end{split}
    \end{equation} for $(t,x) \in \Omg \cap \{|r-t|\ls t^\gamma\}$.
    Therefore, using also Lemma~\ref{lem:dvrv.in.wave.zone}, we obtain
    \begin{equation}
        \begin{split}
            &\: \Big| (\rd_t + \rd_r)(rZ^I (u-v))\Big|(t,x) \\
            \ls &\:  \f{\ep^2}t \Big| (\rd_s U_I)(\ep\ln t - \de, |x|-t,x/|x|;\ep) - \lim_{q\to -\infty} (\rd_s U_I)(\ep\ln t - \de, q,x/|x|;\ep)\Big| \\
            &\: + \ep (1 + \ln\brk{t-r}) \brk{t-r}^{-1} \brk{t}^{-1+C_{|I|}\ep} + \ep \brk{t-r} \brk{t}^{-2+C_{|I|}\ep}.
        \end{split}
    \end{equation}
    To control the first term on the right-hand side, we use \eqref{eq:UI.integral.of.AI} to obtain
    \begin{equation}
        \begin{split}
            &\: \Big| (\rd_s U_I)(\ep\ln t - \de, |x|-t,x/|x|;\ep) - \lim_{q\to -\infty} (\rd_s U_I)(\ep\ln t - \de, q,x/|x|;\ep)\Big| \\
            \leq &\: \int_{-\infty}^{|x|-t}  (\rd_q \rd_s U_I)(\ep\ln t - \de, q,x/|x|;\ep) \,\ud q
            \ls \int_{-\infty}^{|x|-t}  \brk{q}^{-2+C_{|I|}\ep} \brk{t}^{C_{|I|}\ep}\,\ud q \\
            \ls &\: \brk{t}^{C_{|I|}\ep} \brk{t-r}^{-1+C_{|I|}\ep} \ls  \brk{t}^{C_{|I|}\ep} \brk{t-r}^{-1},
        \end{split}
    \end{equation}
    where in the last step we used $\brk{t-r} \ls \brk{t}$ and absorbed the loss in the power by making $C_{|I|}$ larger.

    Combining all the estimates above yields the desired conclusion. \qedhere
\end{proof}

We now turn to the proof of Theorem~\ref{thm:main}. As before, we use $(T,X)$ to denote an output point and $(t,x)$ to denote an input point.

\begin{proposition}\label{jl:prop:main.terms.for.cutoff}
Let $T,X$ be fixed. Suppose $1 \leq H \leq (T-|X|)^{\f 34}$. Then the following estimates hold on $D_{T,X}$ for $|I|\leq N$ and $\ep \ll_N 1$:
    \begin{equation}
        \begin{split}
            &\: \Big| \Box_\bfm(\chi_{>H}(t-r)Z^I (u-v)(t,x)) \Big|\\
            \ls_N &\: \begin{cases}
                \ep \brk{t}^{-4+C_N \ep}   & \hbox{when $r \leq \f t2$} \\
                \ep 1_{\{t-r\geq \f H2\}} (1 + \ln \brk{t-r})\brk{t-r}^{-2} \brk{r}^{-2+C_N\ep} + \ep 1_{\{\f H2 \leq t-r\leq H\}} \brk{r}^{-3+C_N\ep} & \hbox{when $r \geq \f t2$}.
            \end{cases}
        \end{split}
    \end{equation}
\end{proposition}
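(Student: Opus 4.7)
The starting point is Lemma~\ref{lem:comm.cutoff} applied to $\phi = Z^I (u-v)$, yielding
\begin{equation*}
\Box_\bfm(\chi_{>H}(t-r) Z^I(u-v)) = \chi_{>H}(t-r)\, \Box_\bfm Z^I(u-v) - 2 \chi'_{>H}(t-r) \, r^{-1}(\rd_t + \rd_r)(rZ^I(u-v)).
\end{equation*}
Since $\Box_\bfm v = 0$ and $[\Box_\bfm, Z] \in \{0, 2\Box_\bfm\}$ for $Z \in \{\rd_\alp, S, \Omg_{ij}, \Omg_{0i}\}$, it follows that $\Box_\bfm Z^I v = 0$ and $\Box_\bfm Z^I u$ is a finite linear combination of $Z^{\leq |I|} \Box_\bfm u$. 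I will estimate the two terms separately.

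The first term is controlled directly by Lemma~\ref{lem:Boxu.est}, which gives $|\chi_{>H}(t-r)\, \Box_\bfm Z^I(u-v)| \ls_N \ep^2 \chi_{>H}(t-r)\, \brk{t}^{-2+C_N \ep} \brk{t-r}^{-2}$. When $r \leq t/2$ we have $\brk{t-r} \sim \brk{t}$, so this is $\ls \ep^2 \brk{t}^{-4+C_N\ep}$, even stronger than the stated bound. When $r \geq t/2$ we have $\brk{t} \sim \brk{r}$, and the right-hand side is bounded by $\ep^2\, 1_{\{t-r \geq H/2\}} \brk{r}^{-2+C_N\ep} \brk{t-r}^{-2}$, which is subsumed into the first summand of the $r \geq t/2$ target bound for $\ep$ small.

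For the second term, I use that $\chi'_{>H}(t-r)$ is supported in $\{H/2 \leq t-r \leq H\}$ and distinguish two subcases. When $r \leq t/2$, the support condition yields $t \leq 2(t-r) \leq 2H \leq 2(T-|X|)^{3/4}$, while $(t,x) \in D_{T,X}$ combined with $r \leq t/2$ forces $t \geq \frac{2}{3}(T-|X|)$; these are incompatible once $T-|X|$ exceeds an absolute constant, so this term vanishes in this subregion (the remaining bounded range of $T-|X|$ is absorbed into the constants by the crude a priori bounds from Lemma~\ref{lem:v.vector.fields} and \eqref{dy:est:ptw:lindblad}). When $r \geq t/2$, I invoke Lemma~\ref{lem:dvru.in.wave.zone}; its hypothesis $(t,x) \in \Omg \cap \{|t-r| \ls t^\gamma\}$ is verified by noting that on $D_{T,X}$ with $r \geq t/2$ we have $t \sim r \gtrsim T-|X|$ (which places $(t,x) \in \Omg$ in the regime $T-|X| > e^{\dlt/\ep}$ of interest) and $|t-r| \leq H \leq (T-|X|)^{3/4} \ls t^{3/4}$, so I may take $\gamma = 3/4$.

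Combining Lemma~\ref{lem:dvru.in.wave.zone} with $|\chi'_{>H}(t-r)| \ls H^{-1} \sim \brk{t-r}^{-1}$ on its support and with $r \sim t$ yields, for $r \geq t/2$,
\begin{equation*}
|\chi'_{>H}(t-r) r^{-1}(\rd_t+\rd_r)(rZ^I(u-v))| \ls_N \ep\, 1_{\{H/2 \leq t-r \leq H\}} \bigl[(1+\ln\brk{t-r})\brk{t-r}^{-2}\brk{r}^{-2+C_N\ep} + \brk{r}^{-3+C_N\ep}\bigr],
\end{equation*}
which matches the second summand of the target. The main technical point, and likely the only nontrivial obstacle, will be carefully tracking the interplay of $(t,x) \in D_{T,X}$, $r \geq t/2$, $H/2 \leq t-r \leq H$, and $H \leq (T-|X|)^{3/4}$ in order to confirm the hypotheses of Lemma~\ref{lem:dvru.in.wave.zone}; once this is in hand the case-by-case bookkeeping is routine.
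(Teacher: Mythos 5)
Your proof is correct and follows essentially the same approach as the paper: apply Lemma~\ref{lem:comm.cutoff}, dispose of $\chi_{>H}\Box_\bfm Z^I u$ via Lemma~\ref{lem:Boxu.est}, and control the $\chi'_{>H}$ term on its support $\{H/2\le t-r\le H\}\cap D_{T,X}$ using Lemma~\ref{lem:dvru.in.wave.zone} together with $|\chi'_{>H}|\ls\brk{t-r}^{-1}$ and $\brk{r}\sim\brk{t}$. Your explicit verification that the $\chi'_{>H}$ term vanishes when $r\le t/2$ (for $T-|X|$ large) is a slight elaboration of what the paper leaves implicit, but it is the same mechanism.
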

\begin{proof}
    By Lemma~\ref{lem:comm.cutoff}, we have
    \begin{equation}\label{eq:cutoff.eq.for.u-v}
        \begin{split}
            &\: \Box_\bfm(\chi_{>H}(t-r) Z^I (u-v)(t,x))\\
            = &\: \chi_{>H}(t-r) (\Box_\bfm (Z^I (u-v))(t,x) -2 \chi'_{>H}(t-r) r^{-1} (\rd_t + \rd_r)(r Z^I(u-v))(t,x) \\
            = &\: \chi_{>H}(t-r) (\Box_\bfm Z^I u)(t,x) -2 \chi'_{>H}(t-r) r^{-1} (\rd_t + \rd_r)(r Z^I(u-v))(t,x),
        \end{split}
    \end{equation}
    where in the last line we used $\Box_\bfm Z^I v = 0$.

    For the first term on the right-hand side of \eqref{eq:cutoff.eq.for.u-v}, we use Lemma~\ref{lem:Boxu.est} to obtain
    \begin{equation}
        |\chi_{>H}(t-r) (\Box_\bfm Z^I u)(t,x)| \ls | \chi_{>H}(t-r)Z^{\leq N} (\Box_\bfm u)(t,x)| \ls_N
        \begin{cases}
            \ep^2 \brk{t}^{-4+C_N\ep} & \hbox{for $r \leq \f t 2$} \\
            \ep^2 \brk{t-r}^{-2} \brk{t}^{-2+C_N\ep}& \hbox{for $r \geq \f t 2$}.
        \end{cases}
    \end{equation}
    Noticing that this term is manifestly supported in $t-r \geq \f H2$, we see that the term is acceptable.

    We turn to the second term on the right-hand side of \eqref{eq:cutoff.eq.for.u-v}. Note that this term is supported in $\{\f H2 \leq t-r\leq H\}$. We first make a few observations on $D_{T,X} \cap \{\f H2 \leq t-r\leq H\}$:
    \begin{enumerate}
        \item $D_{T,X} \cap \{(t,x):\f H2 \leq t-r\leq H\}\subset \{(t,x): T-|X| \leq t+r \leq T+|X|, \f H2\leq  t-r\leq H\}$.
        \item As a result, $|t-r| \ls t^{\f 34}$ in $D_{T,X} \cap \{\f H2 \leq t-r\leq H\}$. In particular, Lemma~\ref{lem:dvru.in.wave.zone} is applicable when $t$ is sufficiently large.
        \item $\brk{r}$ and $\brk{t}$ are comparable in this region.
    \end{enumerate}
    Applying Lemma~\ref{lem:dvru.in.wave.zone}, we obtain
    \begin{equation}
        \Big|  \Big((\rd_t + \rd_r)(rZ^I (u-v))(t,x) \Big) \Big| \ls_N \ep \brk{t}^{-2+C_N\ep} \brk{t-r}+ \ep(1 + \ln \brk{t-r}) \brk{t}^{-1+C_N\ep} \brk{t-r}^{-1}.
    \end{equation}
    (Note that we only apply Lemma~\ref{lem:dvru.in.wave.zone} when $t$ is large. For finite $t$, it suffices to use \eqref{dy:est:ptw:lindblad}.)
    Using that $|\chi'_H(t-r)| \ls 1_{\{\f H2 \leq t-r\leq H\}} \brk{t-r}^{-1}$, and that $\brk{r}$ and $\brk{t}$ are comparable, it thus follows that the second term on the right-hand side of \eqref{eq:cutoff.eq.for.u-v} is acceptable. \qedhere
\end{proof}

\begin{proposition}\label{prop:main.error}
    Fix $T,X$ such that $|X| - T \leq R_0$ and $\brk{T-|X|} \geq \ep^{-2}$. For any $\eta \in (0, 1)$ and $N\in \bbZ_{\geq 0}$, the following holds when $\ep\ll_N 1$ for some $C_N >0$:
    \begin{equation}
        |Z^{\leq N} (u-v)|(T,X) \ls_{R_0,N} \ep \eta^{-2}  T^{C_N\ep} \min\{\brk{T-|X|}^{-\f 32+\f{\eta}{2}}, \ep^{-\f 12 -\f \eta 2} \brk{|X|}^{-1} \brk{T-|X|}^{-\f 12+\f{\eta}{2}} \}.
    \end{equation}
\end{proposition}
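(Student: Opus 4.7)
My plan is to bound $Z^{\leq N}(u-v)(T,X)$ by combining the cutoff construction of Proposition~\ref{jl:prop:main.terms.for.cutoff} with the linear wave inversion Proposition~\ref{jl:prop:invert.wave}. Since $T-|X|\geq \ep^{-2}\geq 1$, I will fix a cutoff scale $H\in[1, T-|X|]$ so that $\chi_{>H}(T-|X|)=1$ and $Z^{\leq N}(u-v)(T,X)=(\chi_{>H}(t-r) Z^{\leq N}(u-v))(T,X)$. By Lemma~\ref{lem:comm.cutoff}, this cutoff product solves an inhomogeneous Minkowskian wave equation whose source, by Proposition~\ref{jl:prop:main.terms.for.cutoff}, is controlled by three pieces: a bulk piece $\ep(1+\ln\brk{t-r})\brk{t-r}^{-2}\brk{r}^{-2+C_N\ep}$ on $\{t-r\geq H/2,\,r\geq t/2\}$, a cutoff-boundary piece $\ep\brk{r}^{-3+C_N\ep}$ on $\{H/2\leq t-r\leq H,\, r\geq t/2\}$, and a super-decaying $\ep\brk{t}^{-4+C_N\ep}$ on $\{r\leq t/2\}$.

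Next, I will apply Proposition~\ref{jl:prop:invert.wave} separately to each piece with $\alpha=3/2-\eta/2$ matching the target decay $\rem^{-3/2+\eta/2}$ and $\eta_0=1/2-\eta/4$. For the bulk piece, after absorbing the logarithm into $\brk{t-r}^{\eta/4}$ and trading $\brk{t-r}^{-1/2}$ against the support constraint $\brk{t-r}\geq H/2$, I will cast the source in the form $B_1\brk{t-r}^{-1-\eta_0}\brk{r}^{-\alpha-1}$ with
\begin{align*}
B_1\lesssim \ep H^{-1/2}(T+|X|)^{1/2-\eta/2+C_N\ep},
\end{align*}
using $\brk{r}\lesssim T+|X|$ on $D_{T,X}$ by the strong Huygens principle. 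For the cutoff-boundary piece, I will use a larger exponent $\alpha'=2-C_N\ep$ matching the $\brk{r}^{-3+C_N\ep}$ decay and giving $B_2\lesssim \ep H^{3/2-\eta/4}$. Proposition~\ref{jl:prop:invert.wave} then delivers both $\min$-terms up to constants, with the $\eta^{-2}$ prefactor arising from $\eta_0^{-1}(\alpha-1)^{-1}\sim\eta^{-2}$.

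The heart of the argument is the choice of $H$. Taking $H_1=\eta^2(T+|X|)^{1-\eta}$ absorbs the $(T+|X|)^{1/2-\eta/2}$ in $B_1$, giving $B_1\lesssim\ep\eta^{-1}(T+|X|)^{C_N\ep}$ and reproducing the first $\min$-term exactly; this also recovers the second $\min$-term whenever $\eta\gtrsim\ep^{1/2+\eta/2}$ since then $\eta^{-1}\lesssim\ep^{-1/2-\eta/2}$. In the complementary regime, I will instead use $H_2=\ep^{1+\eta}(T+|X|)^{1-\eta}$, which yields $B_1\lesssim\ep^{1/2-\eta/2}(T+|X|)^{C_N\ep}$ and directly produces the second $\min$-term with its correct $\ep^{-1/2-\eta/2}$ prefactor. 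The cutoff-boundary contribution is checked to be subdominant under either choice of $H$ by invoking $\rem\geq\ep^{-2}$ and $H\leq\rem$ to absorb the $H^{3/2-\eta/4}$ growth into the residual powers of $\rem$.

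The main obstacle will be the regime in which neither $H_1$ nor $H_2$ fits inside $[1, T-|X|]$, which forces $T$ to be enormous with $|X|$ close to $T$. In that case the cutoff argument cannot localize at $(T,X)$, and I plan to bypass it by integrating $(\rd_t+\rd_r)(r Z^I(u-v))$ along outgoing null lines with $t-r=T-|X|$ fixed, starting from the axis $r=0$ where $rZ^I(u-v)$ vanishes, and using the improved wave-zone bound of Lemma~\ref{lem:dvru.in.wave.zone}; this integrated decay matches the target once $|X|\sim T$ and $\rem\geq\ep^{-2}$ are combined. Assembling these three regimes and carefully bookkeeping the exponents of $\ep$, $\eta$, $T$, and $|X|$ throughout will conclude the proof.
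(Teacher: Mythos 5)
Your overall scheme—cutoff at scale $H$, source estimates from Proposition~\ref{jl:prop:main.terms.for.cutoff}, inversion via Proposition~\ref{jl:prop:invert.wave}, then optimize over $H$—is the same as the paper's. But the execution has a genuine gap in the choice of $\alpha$ and, consequently, $H$.

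You set $\alpha=3/2-\eta/2$ to match the \emph{target} decay $\brk{T-|X|}^{-3/2+\eta/2}$. Since the bulk source only decays like $\brk{r}^{-2+C_N\ep}$, while Proposition~\ref{jl:prop:invert.wave} asks for $\brk{r}^{-\alpha-1}=\brk{r}^{-5/2+\eta/2}$, you are forced to absorb a loss of roughly $(T+|X|)^{1/2-\eta/2}$ into $B_1$, which in turn forces $H\gtrsim(T+|X|)^{1-\eta}$. However, Proposition~\ref{jl:prop:main.terms.for.cutoff} requires $H\leq(T-|X|)^{3/4}$. This constraint is \emph{not} only violated when $|X|$ is close to $T$: for $|X|$ bounded and $T$ large, $T+|X|\sim T-|X|\sim T$, and $(T+|X|)^{1-\eta}>(T-|X|)^{3/4}$ whenever $\eta<1/4$. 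So for the small-$\eta$ range—precisely where the decay rate is sharpest—your $H_1$ and $H_2$ fail the hypothesis of Proposition~\ref{jl:prop:main.terms.for.cutoff} already in the finite-$|X|$ region, and your misdiagnosis of the problematic regime means the fallback you propose would have to carry essentially the entire theorem. Your fallback is also incomplete: Lemma~\ref{lem:dvru.in.wave.zone} only controls $(\rd_t+\rd_r)(rZ^I(u-v))$ in $\Omega\cap\{|t-r|\lesssim t^\gamma\}$, whereas your outgoing null integration starting at $r=0$ with $t-r=T-|X|$ fixed passes through a large stretch where $t\sim T-|X|$ so that $|t-r|=T-|X|$ is comparable to $t$ itself, far outside the wave zone where that lemma applies.

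The paper resolves this by making the \emph{opposite} choice: set $\alpha$ to match the \emph{source's} $r$-decay rather than the target, namely $\alpha=1+\ep$ for the bulk piece (matching $\brk{r}^{-2+C_N\ep}$) and $\alpha=2-C_N\ep$ for the cutoff-boundary piece (matching $\brk{r}^{-3+C_N\ep}$). This means $B$ picks up only a factor of $H^{-1+\eta}$ (respectively $H^{1+\eta}$) from the $\brk{t-r}$ support constraint, and one then optimizes $H$ at the scale $\brk{T-|X|}^{1/2}$ (for the first $\min$-term) or $\ep^{-1/2}\brk{T-|X|}^{1/2}$ (for the second). Because $\brk{T-|X|}\geq\ep^{-2}$, both choices automatically satisfy $H\leq(T-|X|)^{3/4}$, uniformly over the whole spacetime region and for all $\eta\in(0,1)$, so no fallback regime arises at all. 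I would suggest redoing your estimates with $\alpha$ tied to the source and $H$ to $\brk{T-|X|}^{1/2}$; once you do, the rest of your bookkeeping should close.
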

\begin{proof}
    For $H \in [1, \f{(T-|X|)^{\f 34}}3]$ to be chosen, we consider the equation $\Box_\bfm(\chi_{>H}(t-r) Z^I (u-v)(t,x))$ in Proposition~\ref{jl:prop:main.terms.for.cutoff}. By the strong Huygens principle (Lemma~\ref{jl:lem:Huygens}), it suffices to consider the terms in $D_{T,X}$. Denote by $\Box_\bfm^{-1} F$ by the forward solution to $\Box_\bfm \phi = F$. We consider the contribution from each term on the right-hand side in the equation in Proposition~\ref{jl:prop:main.terms.for.cutoff}.

    For the term $\ep\brk{t}^{-4+C_N \ep}$ in $D_{T,X} \cap \{r \leq \f t2\}$, we write
    $$\ep 1_{D_{T,X}} 1_{\{r\leq \f t2\}}\brk{t}^{-4+C_N\ep} \ls \ep \brk{t-r}^{-2+C_N\ep + \eta} \brk{r}^{-2-\eta}$$
    and then apply Proposition~\ref{jl:prop:invert.wave} with $B=\ep$, $\alp = 2 - C_N\ep - \eta$, $\eta_0 = \eta$ to obtain
    \begin{equation}\label{eq:u-v.main.err.1}
        \begin{split}
            &\: \Big| \Big(\Box_{\bfm}^{-1} (1_{D_{T,X}} 1_{\{r\leq \f t2\}} \ep \brk{t}^{-4+C_N\ep})\Big)(T,X) \Big| \\
            \ls &\: \ep \eta^{-1} \min \{ \brk{|T|-|X|}^{-2+C_N\ep+\eta}, \ep^{-\f 12 -\f \eta 2} \brk{|X|}^{-1} \brk{T-|X|}^{-1+C_N \ep + \eta} \},
        \end{split}
    \end{equation}
    which is much better than the estimate we need.

    We next consider the term $\ep 1_{\{t-r\geq \f H2\}} (1 + \ln \brk{t-r}) \brk{t-r}^{-2} \brk{r}^{-2+C_N\ep}$ in $D_{T,X} \cap \{r \geq \f t2\}$. Noting that in $D_{T,X} \cap \{t-r\geq \f H2\} \cap \{r\geq \f t2\}$, we have
    $$r \leq T + |X| - t \leq 2T + R_0 \ls_{R_0} T.$$
    Hence,
    \begin{align*}
        & \ep 1_{D_{T,X}} 1_{\{r\geq \f t2\}} 1_{\{t-r\geq \f H2\}} (1 + \ln \brk{t-r}) \brk{t-r}^{-2} \brk{r}^{-2+C_N\ep} \\
        \ls_{R_0} & \ep 1_{\{r\geq \f t2\}} H^{-1+\eta} T^{C_N\ep} (1 + \ln \brk{t-r}) \brk{t-r}^{-1-\eta} \brk{r}^{-2-\ep} \\
        \ls_{R_0} & \ep 1_{\{r\geq \f t2\}} \eta^{-1} H^{-1+\eta} T^{C_N\ep} \brk{t-r}^{-1-\frac{\eta}{2}} \brk{r}^{-2-\ep},
    \end{align*}
    where we used the simple bound $1 + \ln \brk{t-r} \aleq \eta^{-1} \brk{t-r}^{\frac{\eta}{2}}$ on the last line.
    Applying Proposition~\ref{jl:prop:invert.wave} with $B = \ep H^{-1+\eta} T^{C_N\ep}$, $\alp = 1+\ep$, $\eta_0 = \frac{\eta}{2}$, we obtain
    \begin{equation}\label{eq:u-v.main.err.2}
        \begin{split}
            &\: \Big| \Big(\Box_{\bfm}^{-1} (\ep 1_{D_{T,X}} 1_{\{r\geq \f t2\}} 1_{\{t-r\geq \f H2\}} \brk{t-r}^{-2} \brk{r}^{-2+C_N\ep})\Big)(T,X)\Big| \\
            \ls_{R_0} &\: \ep \eta^{-2} T^{C_N\ep} H^{-1+\eta} \min \{  \brk{T-|X|}^{-1-\ep}, \ep^{-1} \brk{|X|}^{-1} \brk{T-|X|}^{-\ep} \}.
        \end{split}
    \end{equation}

    Finally, we consider the term $\ep 1_{\{\f H2 \leq t-r\leq H\}} \brk{r}^{-3+C_N\ep}$ in $D_{T,X} \cap \{r \geq \f t2\}$. For this we used $t-r\leq H$ to obtain
    $$\ep 1_{D_{T,X}} 1_{\{r\geq \f t2\}} 1_{\{\f H2 \leq t-r\leq H\}} \brk{r}^{-3+C_N\ep} \ls \ep 1_{\{r\geq \f t2\}} H^{1+\eta} \brk{t-r}^{-1-\eta} \brk{r}^{-3+C_N\ep}.$$
    Hence, applying Proposition~\ref{jl:prop:invert.wave} with $B = \ep H^{1+\eta}$, $\alp = 2-C_N \ep$, $\eta_0=\eta$, we obtain
    \begin{equation}\label{eq:u-v.main.err.3}
        \begin{split}
            &\: \Big| \Big(\Box_{\bfm}^{-1} (\ep 1_{D_{T,X}} 1_{\{r\geq \f t2\}} 1_{\{\f H2 \leq t-r\leq H\}} \brk{r}^{-3+C_N\ep})\Big)(T,X) \Big| \\
            \ls &\: \ep \eta^{-1} H^{1+\eta} \min \{  \brk{T-|X|}^{-2+C_N\ep}, \brk{|X|}^{-1} \brk{T-|X|}^{-1+C_N\ep} \}.
        \end{split}
    \end{equation}

    We now use \eqref{eq:u-v.main.err.2} and \eqref{eq:u-v.main.err.3} with $H =\brk{T-|X|}^{\f{1}{2}}$ and $H = \ep^{-\f 12}\brk{T-|X|}^{\f{1}{2}}$.
    Notice that since $\brk{T-|X|} \geq \ep^{-2}$, we have $H \leq (T-|X|)^{\f 34}$ as needed. Using $H =\brk{T-|X|}^{\f{1}{2}}$, we obtain from the first bounds in \eqref{eq:u-v.main.err.2} and \eqref{eq:u-v.main.err.3} that
\begin{equation}\label{eq:u-v.main.err.4}
    \begin{split}
        \hbox{\eqref{eq:u-v.main.err.2} + \eqref{eq:u-v.main.err.3}}\ls_{R_0} &\: \ep \eta^{-2} T^{C_N\ep} \brk{T-|X|}^{-\f 32+\f{\eta}{2}},
    \end{split}
    \end{equation}
    where we have used $\brk{T-|X|}^{C_N\ep} \ls T^{C_N\ep}$.

    Similarly, using $H = \ep^{-\f 12}\brk{T-|X|}^{\f{1}{2}}$, we obtain from the second bounds in \eqref{eq:u-v.main.err.2} and \eqref{eq:u-v.main.err.3} that
    \begin{equation}\label{eq:u-v.main.err.5}
        \begin{split}
            \hbox{\eqref{eq:u-v.main.err.2} + \eqref{eq:u-v.main.err.3}}\ls_{R_0} &\: \ep^{\f 12-\f \eta 2} \eta^{-2} T^{C_N\ep} \brk{|X|}^{-1} \brk{T-|X|}^{-\f 12+\f{\eta}{2}} .
        \end{split}
    \end{equation}

    Combining \eqref{eq:u-v.main.err.4} and \eqref{eq:u-v.main.err.5}, we have
    \begin{equation}
    \begin{split}
        \hbox{\eqref{eq:u-v.main.err.2} + \eqref{eq:u-v.main.err.3}}\ls_{R_0} &\: \ep \eta^{-2} T^{C_N \ep} \min\{\brk{T-|X|}^{-\f 32+\f{\eta}{2}}, \ep^{-\f 12 - \f \eta 2} \brk{|X|}^{-1} \brk{T-|X|}^{-\f 12+\f{\eta}{2}} \}.
    \end{split}
    \end{equation}
    We have thus checked that all terms are acceptable and the proposition is proven. \qedhere
\end{proof}

Theorem~\ref{thm:main} follows from Proposition~\ref{prop:main.error}.

\section{Asymptotics in finite-$|x|$ region (Proof of Corollary~\ref{cor:finite})} \label{sec:asymp-v}

\subsection{Proof of Corollary~\ref{cor:finite}} \label{subsec:asymp-v}
Before we turn to the proof of Corollary~\ref{cor:finite}, we first show that in the finite-$|x|$ region, the lowest spherical mode dominates, i.e., the higher spherical modes decay faster. The idea is that for $\ell \geq 1$, we can trade additional powers of $r$ for powers of $t$ using elliptic estimates; see \cite{AAGPrice, MTT} and \cite[Section~7]{LO}. (For the purpose of Corollary~\ref{cor:finite}, we only need the improvement for $\bbS_{(\geq 1)}$, but it is not necessary to obtain the improvement for $\bbS_{(\geq 2)}$. However, the improved estimates after applying $\bbS_{(\geq 2)}$ will be used in the proof of Corollary~\ref{cor:main.2} in the next section.)

\begin{lemma}\label{lem:elliptic.u-v}
For any $\eta >0$ and $N\in \bbZ_{\geq 0}$, the following improved decay rates for higher angular modes hold in the region $r \leq \f t2$ when $\ep \ll_N 1$:
\begin{enumerate}
\item $$\sum_{a+b+|\alp|\leq N} | (t\rd_t)^a(r\rd_r)^b \Omg^\alp \bbS_{(\geq 1)} (u-v)|(t,x) \ls_{N} \ep \eta^{-2} r^{1-\f \eta 2} \brk{t}^{-\f 52+\eta+C_{N} \ep},$$
\item $$\sum_{a+b+|\alp|\leq N} | (t\rd_t)^a(r\rd_r)^b \Omg^\alp \bbS_{(\geq 2)} (u-v)|(t,x) \ls_{N} \ep \eta^{-3} r^{2-\f \eta 2} \brk{t}^{-\f 72+\eta+C_{N} \ep}.$$
\end{enumerate}
\end{lemma}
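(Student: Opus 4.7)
The argument is the standard elliptic-inversion technique on the spheres of constant $(t, r)$ (cf.\ \cite{AAGPrice, MTT} and \cite[\S7]{LO}): on $\bbS_{(\geq \ell)}$ the angular Laplacian $\rslap$ is invertible with $L^\infty \to L^\infty$ operator norm $\ls 1/(\ell(\ell+1))$, so one gains a factor of $r^2$ from $\rslap^{-1}$ each time one passes to a higher spherical harmonic. Setting $\phi := u - v$ and using $\Box_\bfm v = 0$, so that $\Box_\bfm \phi = \Box_\bfm u$, and expanding $\Box_\bfm$ in polar coordinates yields the key identity
$$\rslap \phi = r^2 \Box_\bfm u + r^2 \rd_t^2 \phi - \rd_r(r^2 \rd_r \phi),$$
in which $\rslap$ commutes with $\rd_t$, $\rd_r$, $r$, every $\Omg_{ij}$, and all projections $\bbS_{(\geq \ell)}$.

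The inputs are $|Z^{\leq N}\phi|(t,x) \ls \ep \eta^{-2} \brk{t}^{-3/2+\eta/2+C_N\ep}$ in $r \leq t/2$ (from Theorem~\ref{thm:main}, using $\brk{T-|X|}\sim T$ in this region) together with $|Z^{\leq N}\Box_\bfm u|(t,x) \ls \ep^2 \brk{t}^{-4+C_N\ep}$ (from Lemma~\ref{lem:Boxu.est}). Since $\brk{t-r} \sim \brk{t}$ in $r \leq t/2$, the conversion formulas in the proof of Lemma~\ref{lem:vector.fields} let us replace each $\rd_t, \rd_r$ by $\brk{t}^{-1} Z$. For (i), I would project the identity to $\bbS_{(\geq 1)}$, invert $\rslap$, and bound the four resulting terms:
$$|r^2 \Box_\bfm u| \ls \ep^2 r^2 \brk{t}^{-4+C_N\ep}, \qquad |r^2 \rd_t^2 \phi|, |r^2 \rd_r^2 \phi| \ls \ep \eta^{-2} r^2 \brk{t}^{-7/2+\eta/2+C_N\ep},$$
$$|2 r \rd_r \phi| \ls \ep \eta^{-2} r \brk{t}^{-5/2+\eta/2+C_N\ep}.$$
The last is dominant; the others each carry an extra factor of $r/t \leq 1/2$ or of $\ep$. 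This yields (i), in fact with the stronger weight $r$ and stronger time decay $\brk{t}^{-5/2+\eta/2}$, which imply the stated $r^{1-\eta/2} \brk{t}^{-5/2+\eta}$ bound since $r \leq \brk{t}$ in this region.

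For (ii), I would iterate the same identity applied to $\bbS_{(\geq 2)} \phi = \bbS_{(\geq 2)} \bbS_{(\geq 1)} \phi$, now feeding in the output of (i) (rather than the Theorem~\ref{thm:main} bound) for the derivative terms on the right. The new dominant term is $|2 r \rd_r \bbS_{(\geq 1)} \phi| \ls r \cdot \brk{t}^{-1} \cdot \ep \eta^{-2} r \brk{t}^{-5/2+\eta/2+C_N\ep} = \ep \eta^{-2} r^2 \brk{t}^{-7/2+\eta/2+C_N\ep}$, matching (ii). The weighted-derivative versions of both estimates follow by running the argument with $\phi$ replaced by $(t\rd_t)^a (r\rd_r)^b \Omg^\alp \phi$: the $\Omg$'s commute cleanly with $\Box_\bfm$, $\rslap$, $r$, and the projections, while the commutators $[t\rd_t, \Box_\bfm]$ and $[r\rd_r, \Box_\bfm]$ produce only multiples of $\Box_\bfm$ (absorbed via Lemma~\ref{lem:Boxu.est}) and $\rd$-type terms bounded by $\brk{t}^{-1} Z \phi$ (controlled via Theorem~\ref{thm:main}).

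The main obstacle is the bookkeeping of these commutator errors and, for (ii), checking that the iteration does not accumulate losses beyond the $r/t$ gained at each step. Both issues are standard for this type of argument: one always uses the sharpest currently available decay as input, and the $L^\infty$ boundedness of $\rslap^{-1}$ on $\bbS_{(\geq \ell)}$ is provided by standard elliptic regularity plus Sobolev embedding on the compact surface $\bbS^2$, at the cost of at most one or two extra $\Omg$-derivatives, which are already absorbed by the derivative count on the right-hand side.
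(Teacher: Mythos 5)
Your approach diverges from the paper's and, while part~(1) looks plausible, the argument for part~(2) has a genuine gap.

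\textbf{Where the paper and your proof part ways.} The paper does not invert $\rslap$ on the sphere at each fixed $(t,r)$. It instead uses a single application of a \emph{weighted elliptic estimate on $\bbR^{3}$} for $\Delta_{\bfe}$ restricted to $\bbS_{(\geq \ell)}$, of the form $\int \varphi^{2}\, r^{2\gamma+2} \aleq_{\gamma} \int (\Delta_\bfe\varphi)^{2}\, r^{2\gamma+6}$, valid in the range $-\tfrac32-\ell < \gamma < -\tfrac12 + \ell$, followed by a fundamental-theorem-of-calculus/Cauchy--Schwarz/Sobolev pointwise bound. The $r^{\ell}$ gain near the origin enters entirely through the $\ell$-dependence of the admissible range of $\gamma$; the paper treats $\ell = 1$ and $\ell = 2$ in parallel (picking $\gamma = -\tfrac32 - \ell + \tfrac{\eta}{2}$), not by iteration. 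Because the paper works near the endpoint $\gamma \to -\tfrac32 - \ell$, it picks up the $r^{\ell - \eta/2}$ weight and one extra factor of $\eta^{-1}$ for $\ell = 2$; these blowups have no counterpart in your approach.

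\textbf{The gap in part (2).} Your iteration feeds the output of~(1) into the term $2r\rd_r \bbS_{(\geq 2)}\phi$ and claims $\rd_r \bbS_{(\geq 1)}\phi \aleq \brk{t}^{-1}\cdot \eps\eta^{-2} r\brk{t}^{-5/2 + \eta/2 + C\eps}$. But the bound from~(1) controls $(r\rd_r)^{b}\bbS_{(\geq 1)}\phi$, so dividing by $r$ only gives $\rd_r\bbS_{(\geq 1)}\phi \aleq \eps\eta^{-2}\brk{t}^{-5/2+\eta/2+C\eps}$ — the radial derivative destroys the $r$-weight rather than converting it into a $\brk{t}^{-1}$ gain. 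This is not a bookkeeping issue: if $\bbS_{(\geq 1)}\phi \sim r$ near $r=0$ (the generic behavior of an $\ell = 1$ mode), then $\rd_r \bbS_{(\geq 1)}\phi \sim 1$, so a bound of the form $\rd_r\bbS_{(\geq 1)}\phi = O(r\brk{t}^{-7/2})$ is simply false. Consequently the terms $2r\rd_r\bbS_{(\geq 2)}\phi$ and $r^{2}\rd_r^{2}\bbS_{(\geq 2)}\phi$ in your second iteration are only $O(r\brk{t}^{-5/2+\eta/2})$, the same as in~(1), and the iteration stalls: you do not reach $r^{2}\brk{t}^{-7/2+\eta}$. (The term $r^{2}\rd_t^{2}\bbS_{(\geq 2)}\phi$ does improve, since $t\rd_t$ genuinely gains $\brk{t}^{-1}$; the problem is specific to the radial terms.) The $\brk{t}^{-1}$-per-derivative rule you are invoking is the one in Lemma~\ref{lem:vector.fields}, which requires control by the full vector field set $Z$ (including $S$ and $\Omg_{0i}$); part~(1) only supplies control by $t\rd_t$, $r\rd_r$, and $\Omg$, which is strictly weaker in the regime $r \ll t$.

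To repair the argument one needs a mechanism that captures the $r^{\ell}$ near-origin gain for each $\ell$ in a single step — e.g.\ the paper's weighted elliptic estimate, or equivalently, the explicit radial Green's function for the operator $r^{2}\rd_r^{2} + 2r\rd_r - \ell(\ell+1)$, whose kernel $\min(r^{\ell}\rho^{-\ell-1}, \rho^{\ell}r^{-\ell-1})$ supplies the $r^{\ell}$ weight directly. Your part~(1) argument is essentially the $\ell=1$ instance of this with the spherical inversion in place of the radial one, which happens to suffice because the gain is only $r^{1}$; it does not extend to $\ell = 2$ by iteration.

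\textbf{A smaller point.} The claim that $\rslap^{-1}$ has $L^{\infty}\to L^{\infty}$ operator norm $\aleq 1/(\ell(\ell+1))$ on $\bbS_{(\geq\ell)}$ is not correct as stated: the $L^{\infty}$ bound passes through Sobolev embedding on $\bbS^{2}$ and costs angular derivatives, which you note, but the resulting constant does not decay like $1/(\ell(\ell+1))$. This does not affect your part~(1), which only needs boundedness, but it confirms that the $\ell$-gain cannot come from the operator norm of $\rslap^{-1}$.
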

\begin{proof}
\pfstep{Step~1: General elliptic estimates and Sobolev embedding} We first use a standard weighted elliptic estimate. For $\ell \in \{ 1, 2\}$, suppose we are given a function $\varphi = \bbS_{(\geq \ell)} \varphi$. The following elliptic estimate holds for $h = \Delta_{\bfe} \varphi$ (see for instance \cite[Lemma~7.7]{LO}):
\begin{equation}\label{eq:elliptic.from.LO}
        \begin{split}
            \int \Big( ((r\rd_r)^2 \varphi)^2 + |\mathring{\slashed{\nabla}}{}^2 \varphi|^2 + |\mathring{\slashed{\nabla}} \varphi|^2 + r^2 |\mathring{\slashed{\nabla}} \rd_r \varphi|^2 + \varphi^2 \Big) r^{2\gamma+2} \, \ud r \ud \mathring{\slashed{\sigma}}
            \ls_\gamma  \int h^2 r^{2\gamma+6} \, \ud r \ud \mathring{\slashed{\sigma}},
        \end{split}
    \end{equation}
    where $-\f 32-\ell < \gamma < -\f 12+\ell$. Notice that for every rotation vector field $\Omg_{ij}$, $\Delta_\bfe \Omg_{ij} \varphi = \Omg_{ij} h$.
    Moreover, $\bbS_{(\ell)} \Omg_{ij} \bbS_{(\ell)} = \Omg_{ij} \bbS_{(\ell)}$ (since $\mathring{\slashed{\Delta}} \Omg_{ij}\bbS_{(\ell)} \varphi = \Omg_{ij} \mathring{\slashed{\Delta}} \bbS_{(\ell)} \varphi = \ell(\ell+1) \Omg_{ij} \varphi$). Thus, \eqref{eq:elliptic.from.LO} also applies to $\Omg_{ij} \varphi$ with the same range of exponents for $\gamma$. As a result, we have
    \begin{equation}\label{eq:elliptic.from.LO.w}
        \begin{split}
            \int \Big(|\mathring{\slashed{\nabla}}{}^2 \varphi|^2 + |\mathring{\slashed{\nabla}} \varphi|^2 + r^2|\mathring{\slashed{\nabla}}{}^2 \rd_r \varphi|^2 + r^2|\mathring{\slashed{\nabla}} \rd_r \varphi|^2 + \varphi^2 \Big) r^{2\gamma+2} \, \ud r \ud \mathring{\slashed{\sigma}}
            \ls_\gamma  \int \Big(|\mathring{\slashed{\nabla}} h|^2 + h^2\Big) r^{2\gamma+6} \, \ud r \ud \mathring{\slashed{\sigma}}.
        \end{split}
    \end{equation}

Since $\varphi = \bbS_{(\geq \ell)} \varphi$ for $\ell \geq 1$, we have $\varphi(0) = 0$. Hence, using the fundamental theorem of calculus (in $r$ for each fixed $\omg$), the Cauchy--Schwarz inequality, and the Sobolev embedding on $\bbS^2$, we have
\begin{equation}\label{eq:elliptic.Sobolev}
\begin{split}
&\: |\varphi|(r,\omg) \leq \Big| \int_{0}^r \rd_r \varphi(r',\omg) \, \ud r'\Big| \\
\leq &\: \Big(\int_{0}^r (\rd_r \varphi)^2(r',\omg) (r')^{2\gamma+4} \, \ud r' \Big)^{\f 12} \Big( \int_{0}^r (r')^{-2\gamma-4}\, \ud r'\Big)^{\f 12} \\
\ls_\gamma &\: \Big(\int_{0}^r (\rd_r \varphi)^2(r',\omg) (r')^{2\gamma+4} \, \ud r' \Big)^{\f 12} r^{-\gamma-\f 32} \\
\ls_\gamma &\:  \Big(\int_{\bbS^2} \int_{0}^r \Big((\rd_r \varphi)^2 + |\mathring{\slashed{\nabla}} \rd_r \varphi|^2 + |\mathring{\slashed{\nabla}}{}^2 \rd_r \varphi|^2 \Big)(r',\omg) (r')^{2\gamma+4} \, \ud r' \ud \mathring{\slashed{\sigma}}\Big)^{\f 12} r^{-\gamma-\f 32}.
\end{split}
\end{equation}
for $\gmm < - \f 32$. Combining \eqref{eq:elliptic.Sobolev} with \eqref{eq:elliptic.from.LO.w}, we thus obtain
\begin{equation}\label{eq:elliptic.main.pointwise}
|\varphi|(r,\omg) \ls_\gamma \Big( \int_{\bbR^3} \Big(|\mathring{\slashed{\nabla}} h|^2 + h^2\Big) r^{2\gamma+6} \, \ud r \ud \mathring{\slashed{\sigma}} \Big)^{\f 12}r^{-\gamma-\f 32},\quad \hbox{for $-\f 32 -\ell < \gamma< - \f 32$}.
\end{equation}

\pfstep{Step~2: Application of the elliptic estimates} We now apply \eqref{eq:elliptic.from.LO} to $\bbS_{(\geq \ell)} (u-v)$ where $\ell \in \{ 1,2\}$. Using Lemma~\ref{lem:Boxu.est} and that $\Box_\bfm v = 0$, we write
$$\Delta_{\bfe} \bbS_{(\geq \ell)} (u-v) = \rd_t^2 \bbS_{(\geq \ell)} (u-v) + O(\ep^2 \brk{t}^{-4+C\ep})\quad \hbox{when $r \leq \f t 2$}.$$
Using the bounds in Proposition~\ref{prop:main.error} for $u-v$ (and recalling from Lemma~\ref{lem:vector.fields} that the vector field bounds give an extra $\brk{t}^{-1}$ decay for each $\rd_t$ derivative in this region), we thus have
$$|\Delta_{\bfe} \bbS_{(\geq \ell)} (u-v)|(t,x) \ls \ep \eta^{-2}\brk{t}^{-\f 72+ \f \eta 2+C\ep}\quad \hbox{when $r \leq \f t 2$}.$$
Introducing a cutoff $\chi_{<\f t4}(r)$ so that
\begin{equation}\label{eq:elliptic.cutoff}
\begin{split}
|\Delta_{\bfe} \bbS_{(\geq \ell)} \chi_{<\f t4}(r)(u-v)|(t,x) \ls &\: \ep \eta^{-2} \brk{t}^{-\f 72+ \f \eta 2+C\ep} + |[\Delta_{\bfe}, \chi_{<\f t4}(r)] (u-v) |(t,x) \\
\ls &\: \ep \eta^{-2} \brk{t}^{-\f 72+ \f \eta 2+C\ep},
\end{split}
\end{equation}
where we have again used Proposition~\ref{prop:main.error}.

We also have a version of \eqref{eq:elliptic.cutoff} with higher derivative bounds. We need to be careful that a general commuting vector field $Z$ needs not behave well with $\bbS_{(\geq \ell)}$ or $\Delta_{\bfe}$. We thus only restrict to commutation with $t\rd_t$, $S$ and $\Omg_{ij}$. Note that
\begin{itemize}
\item $[\Box_\bfm,S] = 2\Box_\bfm$, $t\rd_t$ commutes with $\Delta_{\bfe}$, and $\Omg_{ij}$ commutes with both $\Box_\bfm$ and $\Delta_\bfe$.
\item Higher order $S$, $\rd_t$ and $\Omg$ derivatives of $S\bbS_{(\geq \ell)} (u-v)$ are still bounded at $r = 0$ (even though for instance higher $\rd_i$ derivatives become singular). 
\end{itemize}
Hence, we have, for $a+b+|\alp| = N$,
\begin{equation*}
\Delta_\bfe ((t\rd_t)^a S^b \Omg^\alp \bbS_{(\geq \ell)} (u-v)) = (t\rd_t)^a \rd^2_{tt} (S^b  \Omg^\alp \bbS_{(\geq \ell)} (u-v)) + O(\ep^2\brk{t}^{-4+C_N\ep}).
\end{equation*}

Using the bounds in Proposition~\ref{prop:main.error} and introducing a cutoff as before, we thus obtain the following analogue of \eqref{eq:elliptic.cutoff} for $\ell \in \{1,2\}$:
\begin{equation}\label{eq:elliptic.cutoff.commuted}
\begin{split}
|\Delta_{\bfe} \bbS_{(\geq \ell)} \chi_{<\f t4}(r)(t\rd_t)^a S^b \Omg^\alp (u-v)|(t,x) \ls_N \ep \eta^{-2}\brk{t}^{-\f 72+ \f \eta 2+C_{N}\ep} \quad a+b+|\alp| \leq N +1.
\end{split}
\end{equation}
In particular, for $h = \Delta_{\bfe} \bbS_{(\geq \ell)} \chi_{<\f t4}(r)(t\rd_t)^a S^b \Omg^\alp (u-v)$ and $a+b+|\alp| \leq N$, we have
\begin{equation}
\begin{split}
\Big( \int_{\bbR^3} \Big(|\mathring{\slashed{\nabla}} h|^2 + h^2\Big) r^{2\gamma+6} \, \ud r \ud \mathring{\slashed{\sigma}} \Big)^{\f 12} \ls_N
(2\gamma+7)^{-1} \ep \eta^{-2}\brk{t}^{-\f 72+ \f \eta 2+C_{N}\ep} \brk{t}^{\gamma+\f 72} \quad \hbox{if $2\gamma+6 > -1$}.
\end{split}
\end{equation}
For $\ell \in \{1,2\}$, take $\gamma = -\f 32 -\ell + \f \eta 2$. Notice that for $\eta>0$ sufficiently small, $-\f 32-\ell < \gamma<-\f 32$ and $2\gamma+6>-1$ so that everything above applies. We thus obtain using \eqref{eq:elliptic.main.pointwise} that
\begin{equation}
\begin{split}
|\bbS_{(\geq \ell)} \chi_{<\f t4}(r)(t\rd_t)^a S^b \Omg^\alp (u-v)|(t,x) \ls_{N} &\: \begin{cases} \ep \eta^{-2}\brk{t}^{\gamma+ \f \eta 2+C_{N}\ep} r^{-\gamma-\f 32} & \hbox{if $\ell = 1$} \\
\ep \eta^{-3}\brk{t}^{\gamma+ \f \eta 2+C_{N}\ep} r^{-\gamma-\f 32} & \hbox{if $\ell = 2$}
\end{cases}\\
\ls_{N} &\: \begin{cases}
\ep \eta^{-2}\brk{t}^{-\f 32 -\ell+ \eta +C_{N}\ep} r^{\ell-\f \eta 2} & \hbox{if $\ell = 1$} \\
\ep \eta^{-3}\brk{t}^{-\f 32 -\ell+ \eta +C_{N}\ep} r^{\ell-\f \eta 2} & \hbox{if $\ell = 2$},
\end{cases}
\end{split}
\end{equation}
for $a+b+|\alp| = N$, as desired. \qedhere
\end{proof}

A very similar argument also gives improved estimates for higher angular modes of $v$ (and therefore also for higher angular modes of $u$):
\begin{lemma}\label{lem:elliptic.v}
For any $\eta >0$ and $N\in \bbZ_{\geq 0}$, the following improved decay rates for higher angular modes hold in the region $r \leq \f t2$ when $\ep \ll_N 1$:
\begin{enumerate}
\item $$\sum_{a+b+|\alp|\leq N} | (t\rd_t)^a(r\rd_r)^b \Omg^\alp \bbS_{(\geq 1)} v|(t,x) \ls_{N} \ep \eta^{-2} r^{1-\f \eta 2} \brk{t}^{-2+\f \eta 2+C_{N} \ep},$$
\item $$\sum_{a+b+|\alp|\leq N} | (t\rd_t)^a(r\rd_r)^b \Omg^\alp \bbS_{(\geq 2)} v|(t,x) \ls_{N} \ep \eta^{-3} r^{2-\f \eta 2} \brk{t}^{-3+ \f \eta 2+C_{N} \ep}.$$
\end{enumerate}
The same estimate also holds with $v$ replaced by $u$.
\end{lemma}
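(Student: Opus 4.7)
The plan is to follow the proof of Lemma~\ref{lem:elliptic.u-v} essentially verbatim, substituting the already-established vector field bounds of Lemma~\ref{lem:v.vector.fields} for the sharper bounds of Proposition~\ref{prop:main.error} used there. The weighted elliptic/Sobolev inequality \eqref{eq:elliptic.main.pointwise} derived in Step~1 of that proof is a purely geometric statement valid for any function whose angular projection to modes of degree $<\ell$ vanishes; in particular it applies to $\bbS_{(\geq \ell)} v$ and its commuted versions.

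The key input is that, because $\Box_\bfm v = 0$, I have the exact identity $\Delta_{\bfe} v = \rd_t^2 v$. Combined with Lemma~\ref{lem:v.vector.fields} and Lemma~\ref{lem:vector.fields} (which yields a $\brk{t}^{-2}$ gain from two unweighted $\rd_t$ derivatives when $r \leq t/2$), this gives the pointwise bound $|\Delta_{\bfe} \bbS_{(\geq \ell)} v|(t,x) \ls \ep \brk{t}^{-3+C\ep}$ in $\{r\leq t/2\}$. Commuting with $t\rd_t$, $S$, and $\Omg_{ij}$ (which interact well with $\bbS_{(\geq \ell)}$, $\Delta_{\bfe}$, and $\Box_\bfm$, exactly as in Lemma~\ref{lem:elliptic.u-v}) and inserting a cutoff $\chi_{<t/4}(r)$ to localize to this region, one obtains the analogous bound for $\Delta_{\bfe} \bbS_{(\geq \ell)} \chi_{<t/4}(r)(t\rd_t)^a S^b \Omg^\alp v$ whenever $a+b+|\alp|\leq N+1$, with the commutator error from the cutoff absorbed into the same $\brk{t}^{-3+C_N\ep}$ bound by Lemma~\ref{lem:v.vector.fields}.

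I then plug this into \eqref{eq:elliptic.main.pointwise} with $\gamma = -\tfrac{3}{2} - \ell + \tfrac{\eta}{2}$ for $\ell \in \{1,2\}$; the powers of $t$ combine to yield the pointwise bound $\ls_N \ep\, \eta^{-\ell-1}\, r^{\ell-\eta/2} \brk{t}^{-1-\ell+\eta/2 + C_N\ep}$, which is precisely the asserted estimate after translating from $(t\rd_t)^a S^b \Omg^\alp$ to $(t\rd_t)^a (r\rd_r)^b \Omg^\alp$ via the identity $r\rd_r = S - t\rd_t$. There is no substantive obstacle here beyond careful bookkeeping with the cutoff and the change of vector field basis; all the technical ingredients are already developed in the proof of Lemma~\ref{lem:elliptic.u-v}.

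Finally, the claim for $u$ follows by writing $u = v + (u-v)$ and observing that the bound for $u-v$ supplied by Lemma~\ref{lem:elliptic.u-v}, namely $\ep\, \eta^{-\ell-1}\, r^{\ell-\eta/2} \brk{t}^{-3/2-\ell+\eta + C_N\ep}$, decays strictly faster in $t$ than the bound just obtained for $v$, so the $v$-contribution dominates and the same estimate persists for $u$.
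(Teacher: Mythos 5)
Your proof matches the paper's argument precisely: use $\Box_{\bfm} v = 0$ to write $\Delta_{\bfe}\bbS_{(\geq\ell)} v = \rd_t^2\bbS_{(\geq\ell)} v$ in $\{r\leq t/2\}$, feed Lemma~\ref{lem:v.vector.fields} into the commuted, cutoff version of this to get $|\Delta_{\bfe}\bbS_{(\geq\ell)}\chi_{<t/4}(r)(t\rd_t)^a S^b\Omg^\alp v| \aleq \ep\brk{t}^{-3+C_N\ep}$, run it through \eqref{eq:elliptic.main.pointwise} with $\gamma=-\tfrac32-\ell+\tfrac\eta2$, and then deduce the estimate for $u$ from the (strictly faster-decaying) bound on $u-v$ in Lemma~\ref{lem:elliptic.u-v}. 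The exponent bookkeeping checks out, and the claim that the $u-v$ contribution is subordinate to the $v$ contribution holds since $-\tfrac32-\ell+\eta < -1-\ell+\tfrac\eta2$ for $\eta<1$.
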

\begin{proof}
    Once the estimate is proven for $v$, that for $u$ follows from Lemma~\ref{lem:elliptic.u-v}. Since $\Box_\bfm v = 0$, we have
    $$\Delta_{\bfe} \bbS_{(\geq \ell)} v = \rd_t^2 \bbS_{(\geq \ell)} v \quad \hbox{when $r \leq \f t 2$}.$$
    Using Lemma~\ref{lem:v.vector.fields}, we derive the following analogue of \eqref{eq:elliptic.cutoff.commuted}:
    \begin{equation}\label{eq:elliptic.cutoff.commuted.v}
\begin{split}
|\Delta_{\bfe} \bbS_{(\geq \ell)} \chi_{<\f t4}(r)(t\rd_t)^a S^b \Omg^\alp v|(t,x) \ls \ep \brk{t}^{-3+C_{N}\ep} \quad a+b+|\alp| \leq N.
\end{split}
\end{equation}
    The remainder of the argument proceeds as in Lemma~\ref{lem:elliptic.u-v} using \eqref{eq:elliptic.main.pointwise}. \qedhere
\end{proof}

We are now ready to turn to the proof of Corollary~\ref{cor:finite}.

\begin{proof}[Proof of Corollary~\ref{cor:finite}]
    Since $v_{(\geq 1)}$ decays faster (by Lemma~\ref{lem:elliptic.v}), it suffices to consider $v_{(0)}$, which is a function of $(t,r)$ alone. Notice that
    $\Box_\bfm v_{(0)} = 0$ is equivalent to $-(\rd_t - \rd_r)(\rd_t + \rd_r)(rv_{(0)}) = 0$ and thus $(\rd_t + \rd_r)(rv_{(0)})$ is a function of $t+r$ alone. Comparing with the characteristic initial data when $t = r$ in \eqref{eq:v.def}, we thus obtain
    $$(\rd_t + \rd_r)(rv_{(0)})(t,r) = \f{2\chi_{>10}(\f{t+r}{2}) \mathfrak L_{(0)}(\f{t+r}2)}{t+r}.$$
    Using the condition $(rv_{(0)})(t,0) = 0$ and integrating along the integral curves of $\rd_t + \rd_r$, we obtain
    \begin{equation}\label{eq:rv0.on.finite.r}
    (r v_{(0)})(t,r) = \int_{\f{t-r}{2}}^{\f{t+r}{2}} \, \f{\chi_{>10}(\rho) \mathfrak L_{(0)}(\rho)}{\rho}\ud \rho.
    \end{equation}
    Recall the definition of $\frkL$ in \eqref{eq:frkL.def}. We see that for $\rho \in [\f{t-r}{2},\f{t+r}{2}]$, $t$ large and $r$ bounded, we have
    \begin{equation}
        \Big| \f 1\rho \mathfrak L_{(0)}(\rho) - \f 2 t\mathfrak L_{(0)}(\f t2) \Big| \ls_{R} \ep^2 t^{-2+C\ep}.
    \end{equation}
    Plugging this into \eqref{eq:rv0.on.finite.r}, we obtain
    \begin{equation}
        \Big| v_{(0)}(t,r) - \f 2 t\mathfrak L_{(0)}(\f t2)\Big| \ls_{R} \ep^2 t^{-2+C\ep}
    \end{equation}
    for all $t$ sufficiently large. Combining this with Theorem~\ref{thm:main} and part 1 of Lemma~\ref{lem:elliptic.v}, we obtain
    \begin{equation}
        \begin{split}
            &\: \Big| u(t,x) - \f 2 t\mathfrak L_{(0)}(\f t2) \Big|\\
            \leq &\: \Big| u(t,x) - v(t,x) \Big| + \Big| v(t,x) - v_{(0)}(t,x)\Big| + \Big| v_{(0)}(t,x) - \f 2 t\mathfrak L_{(0)}(\f t2) \Big|
            \ls_{R}  \ep \eta^{-2} t^{-\f 32+C\ep+\eta}.
        \end{split}
    \end{equation}
    Finally, plugging in the definition of $\frkL$ in \eqref{eq:frkL.def} yields the desired conclusion. \qedhere
\end{proof}

\subsection{An alternative proof} \label{dy:sec:finite-alt}
In this subsection, we prove a slightly different version of Corollary \ref{cor:finite} by applying only the results in \cite{yu2024timelike}. To be more specific, we will prove that for each $R>0$,
\begin{align}\label{est:altpf:cor:finite}
\sup_{|x|\leq R} \Big| u(t,x) -\frac{2}{t}\mathfrak{L}_{(0)}(\frac{t}{2})\Big|
        \ls_{R} t^{-\frac{4}{3}+C\eps},\qquad \forall t\geq 4e^{\delta/\eps}.
\end{align}
The only difference between \eqref{est:altpf:cor:finite} and the estimate in Corollary \ref{cor:finite} is the decay rate on the right side.

Since \begin{align*}
    |\partial u|\lesssim \brk{t-r}^{-1}|Z^1u|\lesssim  \eps \brk{t}^{-2+C\eps}
\end{align*}
whenever $t>0$ and $r<t/2$,
we have
\begin{align*}
    \sup_{|x|\leq R} |u(t,x)-u(t,0)|&\lesssim R\sup_{|x|\leq R}|\partial u(t,x)|\lesssim \eps R\brk{t}^{-2+C\eps}
\end{align*}
for each $R>0$. It thus suffices to prove \eqref{est:altpf:cor:finite} for $x=0$.

Recall \cite[Proposition 6.1]{yu2024timelike}. For each multiindex $I$, if we define $U_I$ inductively by $U_0=\wh{U}$ and \eqref{eq:UI.def}, then for all $t\geq 2e^{\delta/\eps}$,
\begin{align*}
\left| \int_{\mathbb{S}^2} U_I(\eps\ln t-\delta,-2t,\omega)\ \ud \mathring{\slashed{\sigma}}(\omega) \right|&\lesssim_I \eps^{-1}t^{-\frac{1}{3}+C_I\eps}.
\end{align*}
In particular, if $Z^I=S$,  we have
\begin{align*}
    U_I&=\eps\partial_s\wh{U}+q\partial_q\wh{U}-\wh{U}.
\end{align*}
Since $U_0=\wh{U}$, we conclude that for all $t\geq 2e^{\delta/\eps}$,
\begin{align}
\label{est:altpf:cor:finite:step1}
    \left| \int_{\mathbb{S}^2} (\eps\partial_s\wh{U}+q\partial_q\wh{U})(\eps\ln t-\delta,-2t,\omega)\ \ud \mathring{\slashed{\sigma}}(\omega) \right|&\lesssim \eps^{-1}t^{-\frac{1}{3}+C\eps}.
\end{align}

Moreover, for all $s\geq 0$, $q\leq 0$, and $\omega\in\mathbb{S}^2$, we have
\begin{align*}
    \eps\partial_s\wh{U}(s,q,\omega)&=-\int_q^\infty\frac{\eps}{2}G(\omega)\wh{A}(p,\omega;\eps)^2\exp(\frac{1}{2}G(\omega)\wh{A}(p,\omega;\eps)s)\ \ud p,\\
    q\partial_q\wh{U}(s,q,\omega)&=q\wh{A}(q,\omega;\eps)\exp(\frac{1}{2}G(\omega)\wh{A}(q,\omega;\eps)s).
\end{align*}
Here we apply \eqref{dy:defn:whAmuU}. Since $|\wh{A}|\lesssim \brk{q}^{-1+C\eps}$ and $|e^x-1|\lesssim |x|e^{|x|}$, we have
\begin{align*}
    \eps\partial_s\wh{U}(s,q,\omega)&=-\int_{-\infty}^\infty\frac{\eps}{2}G(\omega)\wh{A}(p,\omega;\eps)^2\exp(\frac{1}{2}G(\omega)\wh{A}(p,\omega;\eps)s)\ \ud p+O(\eps\int_{-\infty}^q \brk{p}^{-2+C\eps}e^{Cs}\ \ud p)\\
    &=\eps^{-1}\mathfrak{L}(e^{\frac{s+\delta}{\eps}},\omega)+O(\eps\brk{q}^{-1+C\eps}e^{Cs}),\\
    q\partial_q\wh{U}(s,q,\omega)&=q\wh{A}(q,\omega;\eps)+q\wh{A}(q,\omega;\eps)(\exp(\frac{1}{2}G(\omega)\wh{A}(q,\omega;\eps)s)-1)\\
    &=q\wh{A}(q,\omega;\eps)+O(|q\wh{A}|\cdot |\wh{A}s|e^{Cs})=q\wh{A}(q,\omega;\eps)+O(\brk{q}^{-1+C\eps}se^{Cs}).
\end{align*}
By plugging these estimates into \eqref{est:altpf:cor:finite:step1}, we obtain
\begin{align*}
    \left|\int_{\mathbb{S}^2}\eps^{-1} \mathfrak{L}(t,\omega)\ \ud \mathring{\slashed{\sigma}}(\omega)-2t\int_{\mathbb{S}^2}\wh{A}(-2t,\omega;\eps)\ \ud \mathring{\slashed{\sigma}}(\omega)\right|&\lesssim \eps^{-1}t^{-\frac{1}{3}+C\eps}+ t^{-1+C\eps}\lesssim \eps^{-1}t^{-\frac{1}{3}+C\eps}.
\end{align*}

Finally, we recall from \cite[Theorem~1]{yu2024timelike} that
\begin{align*}
    u(t,x)&=\frac{\eps}{2\pi}\int_{\mathbb{S}^2}\wh{A}(x\cdot \omega-t,\omega;\eps)\ \ud \mathring{\slashed{\sigma}}(\omega)+O(\brk{|x|-t}^{-2}t^{C\eps})
\end{align*}
whenever $t\geq e^{\delta/\eps}$ and $|x|<t$. Thus, for $t\geq 2e^{\delta/\eps}$ and $x=0$, we have
\begin{align*}
    u(2t,0)&=\frac{\eps}{2\pi}\int_{\mathbb{S}^2}\wh{A}(-2t,\omega)\ \ud \mathring{\slashed{\sigma}}(\omega)+O(t^{-2+C\eps})\\
    &=\frac{1}{4\pi t}\int_{\mathbb{S}^2}\mathfrak{L}(t,\omega)\ \ud \mathring{\slashed{\sigma}}(\omega)+O(\eps t^{-1}\cdot \eps^{-1}t^{-\frac{1}{3}+C\eps}+t^{-2+C\eps})\\
    &=t^{-1}\mathfrak{L}_{(0)}(t)+O(t^{-\frac{4}{3}+C\eps}).
\end{align*}
We finish the proof by replacing $t$ with $\frac{t}{2}$, which explains why we need $t\geq 4e^{\delta/\eps}$.

\section{Asymptotics of the radiation field (Proof of Corollary~\ref{cor:rad-field})} \label{sec:rad-field}

We begin by justifying the existence of the radiation field of $Z^{I} v$ and its properties.
\begin{lemma} \label{lem:B-vf}
Consider a multiindex $I$. If $\eps\ll_{|I} 1$, then the limit
\begin{equation*}
	B_{I}(p, \omg;\ep) := \lim_{T \to \infty} \eps^{-1} (r (\rd_{t} - \rd_{r}) Z^{I} v) \restriction_{(t, r, \frac{x}{\abs{x}}) = (T, p + T, \omg)}
\end{equation*}
exists and the convergence is uniform in $(p, \omg)$ lying in any compact subset of $\bbR \times \bbS^{2}$. Furthermore, we have the error bound
\begin{equation} \label{eq:B-vf-error}
    \abs{r (\rd_{t} - \rd_{r}) Z^{I} v (t,x)- \eps B_{I}(\abs{x} - t, x/\abs{x}; \eps)} \aleq_{\abs{I}} \eps \brk{t}^{-1 + C_{\abs{I}} \eps} \quad \hbox{ when $\frac{t}{2} < r < t$},
\end{equation}
as well as the upper bound
\begin{equation} \label{eq:B-vf}
    \abs{B_{I}(p,\omg; \eps)} \aleq_{\abs{I}} \brk{p}^{-1 + C_{\abs{I}} \eps} \quad \hbox{ when } p < 0.
\end{equation}
\end{lemma}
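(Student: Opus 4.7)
The plan is to derive a transport equation for $\phi_{I} := (\partial_{t} - \partial_{r})(r Z^{I} v)$ along outgoing null characteristics of Minkowski and integrate it. Since $[\Box_{\bfm}, Z] \in \{0, 2 \Box_{\bfm}\}$ for each commuting vector field in our set, we have $\Box_{\bfm} Z^{I} v = 0$. Using the decomposition $\Box_{\bfm} = -r^{-1}(\partial_{t} + \partial_{r})(\partial_{t} - \partial_{r})(r \,\cdot\,) + r^{-2} \mathring{\slashed{\Delta}}$ then gives
\begin{equation*}
(\partial_{t} + \partial_{r}) \phi_{I} = r^{-2} \mathring{\slashed{\Delta}}(r Z^{I} v),
\end{equation*}
which along the characteristic $\{(s, (p+s)\omg) : s \geq \max(0, -p)\}$ becomes an ODE in $s$.

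The central pointwise estimate I will need is
\begin{equation*}
\bigl| r^{-2} \mathring{\slashed{\Delta}}(r Z^{I} v) \bigr|(t, x) \aleq_{|I|} \eps \brk{t}^{-2 + C_{|I|} \eps}, \qquad 0 < r \leq t,
\end{equation*}
so that along the characteristic the right-hand side of the transport equation is dominated by $\eps s^{-2+C\eps}$, and hence integrable in $s$. For $t/2 \leq r \leq t$, this follows from the bare bound $|\mathring{\slashed{\Delta}}(r Z^{I} v)| \leq r |Z^{\leq |I|+2} v|$ together with Lemma~\ref{lem:v.vector.fields}. For $r \leq t/2$, the bare bound loses a factor of $r^{-1}$ near the axis and is insufficient, so I will instead expand $\mathring{\slashed{\Delta}} = r^{2} \Delta_{\bfe} - \partial_{r}(r^{2} \partial_{r})$, apply to $r Z^{I} v$, and use the wave equation $\Delta_{\bfe} Z^{I} v = \partial_{t}^{2} Z^{I} v$ to arrive at the algebraic identity
\begin{equation*}
r^{-2} \mathring{\slashed{\Delta}}(r Z^{I} v) = r (\partial_{t}^{2} - \partial_{r}^{2}) Z^{I} v - 2 \partial_{r} Z^{I} v.
\end{equation*}
Combining Lemma~\ref{lem:vector.fields} with Lemma~\ref{lem:v.vector.fields} yields $|\partial^{a} Z^{I} v|(t, x) \aleq \eps \brk{t}^{-1 - |a| + C\eps}$ for $|a| \leq 2$ and $r \leq t/2$, which closes the estimate.

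With this bound in hand, the existence of $B_{I}$ and the error estimate \eqref{eq:B-vf-error} will both follow by integrating the transport equation along the characteristic $r - t = p$ with $-t/2 < p < 0$: the Cauchy criterion holds via $\int_{T}^{\infty} \eps s^{-2+C\eps} \, \ud s \aleq \eps T^{-1+C\eps}$, and the algebraic identity $r(\partial_{t} - \partial_{r}) Z^{I} v - \phi_{I} = -Z^{I} v$ combined with $|Z^{I} v|(t,x) \aleq \eps \brk{t}^{-1+C\eps}$ (Lemma~\ref{lem:v.vector.fields}) is absorbed into the right-hand side of \eqref{eq:B-vf-error}. For the upper bound \eqref{eq:B-vf} at $p < 0$, I will integrate from the reference time $T_{0} := \max(1, |p|)$; when $|p| \geq 1$, setting $T_{0} = |p|$ places the starting point on the axis $r = 0$, where $\phi_{I}(|p|, 0, \omg) = -Z^{I} v(|p|, 0)$ (using $\lim_{r \to 0} r(\partial_{t} - \partial_{r}) Z^{I} v = 0$) is $O(\eps \brk{p}^{-1+C\eps})$ by Lemma~\ref{lem:v.vector.fields}; the tail integral $\int_{|p|}^{\infty}$ is also $O(\eps \brk{p}^{-1+C\eps})$ by the pointwise estimate, and dividing by $\eps$ gives \eqref{eq:B-vf}. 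The main obstacle will be the improved interior bound: the naive estimate $\aleq \eps r^{-1} \brk{t}^{-1+C\eps}$ produces a logarithmic divergence of the tail integral at the axis, and the algebraic identity above, derived from $\Box_{\bfm} Z^{I} v = 0$, resolves this by exchanging angular decay for Cartesian-derivative decay supplied by Lemma~\ref{lem:vector.fields}.
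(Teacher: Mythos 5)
Your proof is correct, but it departs from the paper's argument in a meaningful way for the bound \eqref{eq:B-vf}. Both you and the paper start from the same transport equation $(\rd_t+\rd_r)\big((\rd_t-\rd_r)(rZ^Iv)\big) = r^{-2}\rslap(rZ^Iv)$ obtained from $\Box_\bfm Z^Iv = 0$, and both obtain the existence of $B_I$ and the error bound \eqref{eq:B-vf-error} by estimating the right-hand side in $\set{t/2 < r < t}$ (where $r^{-1}\rslap Z^Iv$ is dominated directly by Lemma~\ref{lem:v.vector.fields}) and integrating along outgoing rays. The difference is in the derivation of \eqref{eq:B-vf}: the paper stops the integration at the timelike cone $\set{r = t/2}$, where $t$, $r$, and $\abs{t-r}$ are all comparable to $\abs{p}$, and simply reads off the seed bound $r\abs{(\rd_t-\rd_r)Z^Iv} \aleq r\brk{t-r}^{-1}\cdot\eps\brk{t}^{-1+C\eps} \aleq \eps\brk{p}^{-1+C\eps}$ from Lemmas~\ref{lem:vector.fields} and~\ref{lem:v.vector.fields}. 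You instead push the integration all the way to the axis $\set{r=0}$, which forces you to control $r^{-2}\rslap(rZ^Iv)$ in the interior $\set{r\leq t/2}$; you resolve the apparent $r^{-1}$ singularity via the identity $r^{-2}\rslap(rZ^Iv) = r(\rd_t^2 - \rd_r^2)Z^Iv - 2\rd_rZ^Iv$ (a rewriting of the wave equation), closed by the Cartesian-derivative bounds from Lemma~\ref{lem:vector.fields}. Your route is slightly longer and requires the extra algebraic identity, but it does yield the uniform interior bound $\abs{r^{-2}\rslap(rZ^Iv)} \aleq \eps\brk{t}^{-2+C\eps}$ for all $0<r\leq t$ as a byproduct; the paper's choice of stopping surface renders this unnecessary. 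One minor slip: with your sign convention one has $r(\rd_t-\rd_r)Z^Iv - \phi_I = Z^Iv$, not $-Z^Iv$ (since $(\rd_t-\rd_r)(rf) = r(\rd_t-\rd_r)f - f$); this has no effect on the estimates, and your evaluation $\phi_I\restriction_{r=0} = -Z^Iv$ is correct.
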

Observe that $B$ in Corollary~\ref{cor:rad-field} coincides with $-\frac{1}{2} B_{0}$ in Lemma~\ref{lem:B-vf}. We also remark that $B_{I}$ enjoys a recursive relation akin to \eqref{eq:AI.def}, but we will not need this fact in what follows.
\begin{proof}
Since $\Box_{\bfm} v = 0$, we have $\Box_{\bfm} Z^{I} v = 0$, and thus \eqref{eq:wave4ZIv} holds. Hence, in $\frac{t}{2} < r < t$,
\begin{align*}
	\abs*{(\rd_{t} + \rd_{r}) (r (\rd_{t} - \rd_{r}) Z^{I} v)}
	\leq \abs*{(\rd_{t} + \rd_{r}) Z^{I} v} + \abs*{\frac{1}{r^{2}} \rslap (r Z^{I} v)}
	\aleq_{\abs{I}} \eps \brk{t}^{-2+C_{\abs{I}} \eps},
\end{align*}
where we used Lemmas~\ref{lem:vector.fields} and \ref{lem:v.vector.fields} (which require the smallness of $\eps$ depending on $\abs{I}$). Integrating this bound on rays of the form $(T, p+T, \omg)$ on $T \in [- 2p, \infty)$ (note that one endpoint lies in $\set{r = \frac{t}{2}}$), we obtain the existence of $B_{I}$. Starting instead from an arbitrary point in $\set{\frac{t}{2} < r < t}$, we obtain the error bound \eqref{eq:B-vf-error}. Finally, \eqref{eq:B-vf} follows from \eqref{eq:B-vf-error} and an application of Lemmas~\ref{lem:vector.fields} and \ref{lem:v.vector.fields} to estimate $r (\rd_{t} - \rd_{r}) Z^{I} v$ on $r = \tfrac{t}{2}$ (so that $t \sim r \sim r-t$).
\end{proof}

The following is the main result of this section, which is a stronger version of Corollary~\ref{cor:rad-field}.
\begin{proposition} \label{prop:rad-field}
For each multiindex $I$, the following holds whenever $\eta \in (0, 1)$ and $\eps \ll_{|I|} 1$:
    \begin{equation*}
\abs*{A_{I}(p, \omg) - B_{I}(p, \omg)} \aleq_{\abs{I}}  \eps^{-\frac{1}{2} - \frac{\eta}{2}} \eta^{-2} \brk{p}^{-\frac{3}{2} + \frac{\eta}{2} + C_{\abs{I}} \eps} \quad \hbox{ when } p < - \eps^{-2}.
\end{equation*}
\end{proposition}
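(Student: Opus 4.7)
The strategy is to reduce the comparison between $A_I$ and $B_I$ to a pointwise estimate at a single point $(T, T+p, \omg)$ with $T$ chosen large depending on $p$ and $\ep$, and then to apply the triangle inequality
\begin{equation*}
\ep \abs{A_I - B_I}(p, \omg) \leq \abs{r(\rd_t - \rd_r) Z^I u - \ep A_I} + \abs{r(\rd_t - \rd_r) Z^I v - \ep B_I} + \abs{r(\rd_t - \rd_r) Z^I (u-v)}.
\end{equation*}
The first term will be controlled by combining \eqref{dy:est:prop3.3:timelike.2} applied to $\wh u$ with \eqref{dy:est:diffutdu} to pass from $\wh u$ to $u$, yielding the schematic bound $\ep(1 + \ln \brk{p}) \brk{p}^{-2} T^{C\ep} + \ep T^{-1 + C\ep}$. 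The second term is bounded by $\ep T^{-1+C\ep}$ via the error estimate \eqref{eq:B-vf-error} in Lemma~\ref{lem:B-vf}.

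The heart of the argument is the third term. Using the identity $\rd_t - \rd_r = (t - r)^{-1}(S - \omg^i \Omg_{0i})$ valid at $x = r \omg$, one has $\abs{r(\rd_t - \rd_r) Z^I (u - v)} \leq \f{T+p}{\abs{p}} \abs{Z^{\leq \abs{I}+1}(u - v)}$ at $(T, T+p, \omg)$. Invoking Theorem~\ref{thm:main} with the \emph{second} option in the minimum (which requires $\brk{p} \geq \ep^{-2}$, the given hypothesis) then yields the key bound
\begin{equation*}
\abs{r(\rd_t - \rd_r) Z^I (u-v)} \aleq \ep^{\f{1}{2} - \f{\eta}{2}} \eta^{-2} T^{C_{\abs{I}} \ep} \brk{p}^{-\f{3}{2} + \f{\eta}{2}}.
\end{equation*}
The amplification factor $T/\abs{p}$ is precisely what upgrades the $\brk{p}^{-1/2 + \eta/2}$ decay of $Z^{\leq |I|+1}(u-v)$ in Theorem~\ref{thm:main} to the required $\brk{p}^{-3/2 + \eta/2}$ rate.

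The plan is then to choose $T = \max(\abs{p}^2, K e^{\delta/\ep})$ for a sufficiently large constant $K$. This choice ensures simultaneously that $(T, T+p, \omg) \in \Omg \cap \{\abs{r-t} \ls T^\gamma\}$ for some $\gamma \in (1/2, 1)$ (so \eqref{dy:est:prop3.3:timelike.2} applies), that $r/t$ stays bounded away from $1/2$ (so Lemma~\ref{lem:B-vf} and the second alternative in Theorem~\ref{thm:main} apply), and that $T^{C \ep} \aleq \brk{p}^{2C\ep}$ up to a constant depending on $\delta$. Since $\brk{p} \geq \ep^{-2}$, a routine check will show that the contributions of the first two terms are absorbed by $\ep^{1/2 - \eta/2} \eta^{-2} \brk{p}^{-3/2 + \eta/2 + C_{\abs{I}} \ep}$ from the third term; dividing by $\ep$ will then produce the claimed estimate.

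The main obstacle lies in the tension between the $\Omg$-restriction of \eqref{dy:est:prop3.3:timelike.2} (which forces $T \geq e^{\delta/\ep}$) and the goal of a polynomial-in-$\brk{p}$ bound. One cannot simply send $T \to \infty$, because the $T^{C \ep}$ factor in \eqref{dy:est:prop3.3:timelike.2} grows; nor can one always take $T = \abs{p}^2$, because $\brk{p}$ is permitted to be as small as $\ep^{-2}$, which is far less than $e^{\delta/\ep}$ for $\ep$ small. The $\max$-choice of $T$ balances these competing requirements and extracts the sharp $\brk{p}^{-3/2 + \eta/2 + C \ep}$ rate from the a priori decay of $u - v$.
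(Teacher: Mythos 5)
Your proposal is correct and follows essentially the same strategy as the paper: concatenate \eqref{dy:est:prop3.3:timelike.2}, \eqref{dy:est:diffutdu}, Lemma~\ref{lem:B-vf}, and Theorem~\ref{thm:main} (with the second option of the minimum) at a well-chosen evaluation point far from the light cone, using the $r$-weight and the $\brk{t-r}^{-1}$ gain from applying $\rd_t - \rd_r$ (your explicit identity is just a hand-computed instance of Lemma~\ref{lem:vector.fields}) to upgrade $\brk{p}^{-1/2+\eta/2}$ to $\brk{p}^{-3/2+\eta/2}$. Your explicit choice $T = \max(\abs{p}^2, Ke^{\delta/\eps})$ is a slight sharpening of the paper's ``$\brk{r-t} \sim \brk{t}^{1/2}$'' prescription, which as literally stated only hits $\abs{p} \gtrsim e^{\delta/(2\eps)}$ because points of $\Omg$ must satisfy $t > e^{\delta/\eps}$; your $\max$-choice fills in the remaining (easy) range $\eps^{-2} < \abs{p} \aleq e^{\delta/(2\eps)}$ cleanly, at the modest cost of checking that the error terms $\eps T^{-1+C\eps}$, which are then merely $\aleq \eps e^{-\delta/\eps}$ rather than polynomially small in $\brk{p}$, are still dominated by $\eps^{3/2-\eta/2}\eta^{-2}\brk{p}^{-3/2+\eta/2}$.
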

\begin{proof}
Throughout the proof, we freely use the fact that in $\set{\tfrac{t}{2} < r < t}$, we have $t \sim r \sim t + r$. We first recall \eqref{dy:est:prop3.3:timelike.2} with $N = 0$:
\begin{align*}
	\abs{r (\rd_{t} - \rd_{r}) Z^{I} \wh{u} - \eps A_{I}(\abs{x} - t, x/\abs{x}; \eps)} \aleq_{\abs{I}} \eps (1 + \ln \brk{r - t}) \brk{r-t}^{-2} t^{C_{\abs{I}} \eps} + \eps t^{-1+C_{\abs{I}} \eps} \quad \hbox{ in } \Omg.
\end{align*}
Given $\gmm \in (0, 1)$, by \eqref{dy:est:diffutdu} (with $N = \abs{I} + 1$; recall that $\wh{u} = \td{u}$) and Lemma~\ref{lem:vector.fields} in the region $\frac{t}{2} < r < t$,
\begin{align*}
\abs{r (\rd_{t} - \rd_{r}) Z^{I} (u - \wh{u})} \aleq_{\gmm, \abs{I}} \eps \brk{t}^{-1 + C_{\abs{I}} \eps} \quad \hbox{ in } \Omg \cap \set{\abs{r - t} \aleq t^{\gmm}} \cap \set{\tfrac{t}{2} < r < t}.
\end{align*}
By Theorem~\ref{thm:main} and Lemma~\ref{lem:vector.fields} in the region $\frac{t}{2} < r < t$,
\begin{align*}
\abs{r (\rd_{t} - \rd_{r}) Z^{I} (u - v)} \aleq_{\abs{I}} \eps^{\frac{1}{2} - \frac{\eta}{2}} \eta^{-2}\brk{t-r}^{-\frac{3}{2} + \frac{\eta}{2}} \quad \hbox{ in } \set{t - r \geq \eps^{-2}} \cap \set{\tfrac{t}{2} < r < t}.
\end{align*}
Finally, by Lemma~\ref{lem:B-vf},
\begin{align*}
    \abs{r (\rd_{t} - \rd_{r}) Z^{I} v - \eps B_{I}(\abs{x} - t, x/\abs{x}; \eps)} \aleq_{\abs{I}} \eps \brk{t}^{-1 + C_{\abs{I}} \eps} \quad \hbox{ in } \set{\tfrac{t}{2} < r < t}.
\end{align*}
Observe that all these bounds hold in the region $\Omg \cap \set{\brk{r - t} \sim \brk{t}^{\frac{1}{2}}} \cap \set{t - r > \eps^{-2}}$ for $\eps$ sufficiently small, where the exponent $\frac{1}{2}$ has been chosen to optimize the bound. By concatenating these bounds in this common region, we obtain
    \begin{equation*}
\abs*{A_{I}(\abs{x}-t, x/\abs{x}; \eps) - B_{I}(\abs{x}-t, x/\abs{x}; \eps)} \aleq_{\abs{I}} \brk{t-r}^{-2 + C_{\abs{I}} \eps} + \eps^{-\frac{1}{2} - \frac{\eta}{2}} \eta^{-2} \brk{t-r}^{-\frac{3}{2} + \frac{\eta}{2} + C_{\abs{I}} \eps},
\end{equation*}
which holds for all $p = \abs{x} - t = r - t < -\eps^{-2}$, and implies the desired bound. \qedhere
\end{proof}
Recalling $A_{0} = - 2 \wh{A}$ and $B_{0} = - 2 B$, Corollary~\ref{cor:rad-field} follows from Proposition~\ref{prop:rad-field}.

It is well-known that by using the Radon transform, solutions to the linear wave equation with sufficiently regular and decaying initial data can be written in terms of the radiation field with a plane wave decomposition. The following lemma shows that this representation holds for $Z^I v$. This should be compared to the approximate solution 
\begin{align}\label{dy:defn:wI}
w_I(t,x)&=-\frac{\eps}{4\pi}\int_{\mathbb{S}^2} A_I(x\cdot \omega-t,\omega)\ \ud \mathring{\slashed{\sigma}}(\omega)
\end{align}
that is used in \cite[Theorem 1]{yu2024timelike}, where $A_I$ is defined in Section \ref{dy:sec:asytdu}.

\begin{lemma}\label{sec:rad-field:lem:dy:ZIvexpress}
For each multiindex $I$, we have 
\begin{align*}
    Z^Iv(t,x)&=-\frac{\eps}{4\pi}\int_{\mathbb{S}^2} B_I(x\cdot \omega-t,\omega)\ \ud \mathring{\slashed{\sigma}}(\omega)
\end{align*}
whenever $|x|<t$ and $t>0$.
\end{lemma}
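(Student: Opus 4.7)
The plan is to define $\tilde v_I(t,x) := -\frac{\eps}{4\pi}\int_{\bbS^2} B_I(x\cdot\omega - t,\omega)\,\ud\mathring{\slashed\sigma}(\omega)$ and show $\tilde v_I = Z^I v$ in $\{|x|<t,\,t>0\}$ by verifying that both sides are smooth solutions of $\Box_\bfm = 0$ sharing the same ``scattering data'' at future null infinity. In the region of interest $x\cdot\omega - t$ is strictly negative for every $\omega \in \bbS^2$, so $B_I(x\cdot\omega - t, \omega)$ is well-defined by Lemma~\ref{lem:B-vf} and the pointwise bound \eqref{eq:B-vf} yields absolute convergence. Since each plane wave $B_I(x\cdot\omega - t,\omega)$, viewed as a function of $(t,x)$ for fixed $\omega$, satisfies $\Box_\bfm[\,\cdot\,] = (|\omega|^2 - 1)\rd_q^2 B_I \equiv 0$, differentiation under the integral gives $\Box_\bfm \tilde v_I = 0$.

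The central computation is to show
\[
\lim_{T\to\infty} r(\rd_t-\rd_r)\tilde v_I\bigl(T,(T+p)\omega\bigr) = \eps B_I(p,\omega) \qquad \text{for } p<0,\ \omega \in \bbS^2,
\]
which is precisely the defining relation of $B_I$ for $Z^I v$ per Lemma~\ref{lem:B-vf}. Passing $(\rd_t-\rd_r)$ inside yields $r(\rd_t-\rd_r)\tilde v_I = \frac{\eps}{4\pi}\int_{\bbS^2} r(1+\omega\cdot\hat x)\rd_q B_I(x\cdot\omega - t,\omega)\,\ud\mathring{\slashed\sigma}(\omega)$. In spherical coordinates with $\hat x$ as the north pole, the substitution $u = r\cos\theta - t$ converts this to $\frac{\eps}{4\pi}\int_0^{2\pi}\int_{-r-t}^{r-t}\frac{r+u+t}{r}\rd_q B_I(u,\omega(u,\phi;r,t))\,\ud u\,\ud\phi$. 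Because Lemma~\ref{lem:B-vf} only gives the slow decay $|\rd_q B_I|\lesssim\brk{u}^{-1+C_{|I|}\eps}$ (after identifying $\rd_q B_I$ with the scattering data of a once-differentiated $Z^I v$), dominated convergence does not apply directly. Instead, I would integrate by parts in $u$: the boundary term at $u = r - t = p$ equals $2 B_I(p,\hat x)$ (since $\omega(p,\phi;r,t) = \hat x$), while the boundary at $u = -r-t$ vanishes thanks to the factor $r+u+t$. The leftover interior contributions -- one of the form $\frac{1}{T+p}B_I$, the other involving $\rd_u\omega\cdot \rd_\omega B_I = O(r^{-1})\rd_\omega B_I$ -- are each $O(T^{-1+C_{|I|}\eps}) \to 0$ by Lemma~\ref{lem:B-vf} (applied to $Z^I v$ and its rotation-commuted versions). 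Integrating over $\phi$ and multiplying by $\frac{\eps}{4\pi}$ then produces $\eps B_I(p,\hat x)$.

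To finish, writing $w := Z^I v - \tilde v_I$, we obtain a smooth solution of $\Box_\bfm w = 0$ in $\{|x|<t,\,t>0\}$ with vanishing scattering data. My route to conclude $w \equiv 0$ is to compute the characteristic data $(\rd_t + \rd_r)(r\tilde v_I)\restriction_{t=r}$ by a boundary-limit analogue of the spherical-coordinate calculation above and match it with the characteristic data for $Z^I v$ provided by Lemma~\ref{lem:data.v.commuted} (for weighted $Z^I$; the unweighted case reduces by commuting the extra $\rd_\alpha$ outward and using the wave equation to express transversal derivatives in terms of tangential ones), then to invoke uniqueness of the characteristic initial value problem for $\Box_\bfm$ on $\{t=r\}$. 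The main obstacle I foresee is executing this boundary limit $t\to r$ in the plane wave integral: the argument $x\cdot\omega - t$ attains its maximum $r-t$ at the pole $\omega=\hat x$ and tends to $0$ there as $t\to r$, so the integrand concentrates near $\hat x$. I expect this to be tractable via another integration-by-parts in the angular variable $\theta$, picking up a boundary contribution at $\theta = 0$ that produces the claimed data, with the remaining angular integrals bounded by the decay of $B_I$ and its angular derivatives near the pole.
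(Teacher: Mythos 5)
Your route is genuinely different from the paper's. The paper proves this in one page by invoking a Kirchhoff-type (spherical-means) representation formula from \cite[Proposition~4.1]{yu2024timelike}: for any fixed $(t,x)$ with $|x|<t$ and any large $T$, $Z^I v(t,x)$ equals $\frac{1}{4\pi}\int_{\bbS^2}\Phi(T, x-(T-t)\theta)\,\ud\mathring{\slashed\sigma}(\theta)$ plus an error of size $|x|\|\rd v(T)\|_{L^\infty(\{|x|<T\})}$, where $\Phi = (-\rd_t+\rd_r)(rZ^I v)$. It then bounds the error via Lemma~\ref{lem:v.vector.fields}, replaces $\Phi$ with $-\eps B_I$ using \eqref{eq:B-vf-error}, changes variables via \cite[Lemma~5.1]{yu2024timelike}, and sends $T\to\infty$. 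You instead define $\tilde v_I$ directly by the plane-wave integral, verify it solves the wave equation, then argue $\tilde v_I = Z^I v$ by first matching scattering data (the ``central computation'') and then matching Goursat data on $\{t=r\}$ and invoking uniqueness for the characteristic initial value problem. The paper's version is cleaner because the backward-in-time spherical-means formula converts the question into a one-sided limit of already-established asymptotics; yours would be more self-contained but pushes several nontrivial boundary analyses to the surface.

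However, there are genuine gaps in your write-up. First, in the ``central computation'' you write the cross term as $\rd_u\omega\cdot\rd_\omega B_I = O(r^{-1})\rd_\omega B_I$, but $\rd_u\omega$ actually has a $\frac{1}{r\sin\theta}$ singularity at the poles $u = r-t$ and $u = -r-t$; the $O(r^{-1})$ bound only holds \emph{after} integrating in $\phi$ and Taylor-expanding $\rd_\omega B_I$ about the pole (so that the singular part averages to zero and the remainder comes with a $\sin\theta$ factor), which additionally requires bounds on second angular derivatives of $B_I$. This is workable via the vector-field bounds you cite, but as stated the estimate is incorrect and the needed cancellation is not identified. Second, and more seriously, the Goursat-data step is not actually a well-defined program: you would need to compute $(\rd_t+\rd_r)(r\tilde v_I)\restriction_{t=r}$ from the boundary limit of the plane-wave integral — which, as you note, concentrates at the pole where the argument of $B_I$ tends to $0^-$ — and then match this with the expression in Lemma~\ref{lem:data.v.commuted}, which is written in terms of $\lim_{q\to-\infty}\rd_s U_I$, \emph{not} in terms of $B_I$. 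The relation between $B_I(0^-,\cdot)$ and that data is precisely the kind of fact your lemma is trying to establish, so without a separate argument this loop is circular. Finally, the first part of your proof (matching scattering data) becomes logically redundant once you pivot to matching characteristic data, since vanishing radiation field alone does not determine a solution in $\{|x|<t\}$; you recognize this, but it leaves the proposal with one computation that is superfluous and another that is both hard and unexecuted.
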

\begin{proof}
Note that $\Box_\bfm Z^Iv=0$. By \cite[Proposition 4.1]{yu2024timelike}, if we set 
\begin{align*}
    \Phi(t,x):=[(-\partial_t+\partial_r)(rZ^Iv)](t,x),
\end{align*}
then for all $T>4(t+1)>0$ and $|x|<t$,
\begin{align}
    \label{sec:rad-field:lem:dy:ZIvexpress:est} Z^Iv(t,x)&=\frac{1}{4\pi}\int_{\mathbb{S}^2} \Phi(T,x-(T-t)\theta)\ \ud \mathring{\slashed{\sigma}}(\theta)+O(|x|\|\partial v(T)\|_{L^\infty(\{x\in\mathbb{R}^3:\ |x|<T\})}).
\end{align}
The implicit constant in $O(\cdot)$ is independent of $T$. Recall from Lemma \ref{lem:v.vector.fields} that $|Z^{\leq N}v|(t,x)\lesssim_N\eps \brk{t}^{-1+C_N\eps}$ whenever $|x|<t$. Since
\begin{align*}
    |x-(T-t)\theta|&\leq T-t+|x|\leq T,\\
    |x-(T-t)\theta|&\geq T-t-|x|\geq T-2t\geq T/2,
\end{align*} we have
\begin{align*}
    \Phi(T,x-(T-t)\theta)&=-(r(\partial_t-\partial_r)Z^Iv)(T,x-(T-t)\theta)+O(\eps T^{-1+C\eps})\\
    &=-\eps B_I(|x-(T-t)\theta|-T,\frac{x-(T-t)\theta}{|x-(T-t)\theta|})+O(\eps T^{-1+C\eps})
\end{align*}
and that
\begin{align*}
   |x|\|\partial v(T)\|_{L^\infty(\{x\in\mathbb{R}^3:\ |x|<T\})}&\lesssim t\cdot \eps T^{-1+C\eps}.
\end{align*} Here we apply \eqref{eq:B-vf-error}. Moreover, it was proved in \cite[Lemma 5.1]{yu2024timelike} that
\begin{align*}
    \lim_{T\to\infty} \left(|x-(T-t)\theta|-T,\frac{x-(T-t)\theta}{|x-(T-t)\theta|}\right)=\left(-t-x\cdot\theta,-\theta\right).
\end{align*}
By sending $T\to\infty$ in \eqref{sec:rad-field:lem:dy:ZIvexpress:est} and using the continuity of $B_I$ (by Lemma~\ref{lem:B-vf}), we finish the proof.
\end{proof}
In Theorem \ref{thm:main} and \cite[Theorem 1]{yu2024timelike}, we present two approximations, $Z^Iv$ and $w_I$, for $Z^Iu$ far inside the light cone. Thus, we can estimate the difference between $Z^Iv$ and $w_I$ by combining these theorems. However, we do not expect $A_I\equiv B_I$ and $Z^Iv\equiv w_I$ in general.

\section{Proof of corollaries on the rigidity of decay rates} \label{sec:rigidity}
\subsection{Representation formula of a coercive quantity of the radiation field}

In the following, we will need to analyze both $u$ and $v$ in terms of spherical harmonics. Recall the notations in Definition~\ref{def:spherical.harmonics} and we introduce some additional conventions as follows:
\begin{definition}\label{def:spherical.harmonics.2}
\begin{enumerate}
\item We fix the following orthonormal basis for the $\ell = 1$ and $\ell = 2$ eigenspaces, respectively:
$$\Big\{ Y_{(1),1}(\omg) :=\sqrt{\f{3}{4\pi}}\omg_1,Y_{(1),2}(\omg) :=\sqrt{\f{3}{4\pi}}\omg_2,Y_{(1),3}(\omg) :=\sqrt{\f{3}{4\pi}}\omg_3 \Big\}$$
and
\begin{align*}
\Big\{ Y_{(2), 1}(\omg) = \sqrt{\f{15}{4\pi}} \omg_1 \omg_2, Y_{(2), 2}(\omg) = \sqrt{\f{15}{4\pi}} \omg_2 \omg_3, Y_{(2), 3}(\omg) = \sqrt{\f{15}{4\pi}} \omg_3 \omg_1,\\
 Y_{(2), 4}(\omg) = \sqrt{\f{15}{16\pi}}(\omg_1^2 - \omg_2^2), Y_{(2), 5}(\omg) = \sqrt{\f{15}{16\pi}}(\omg_2^2 - \omg_3^2) \Big\}.
\end{align*}
\item Given a function $f$, define $f_{(1),i}$, $f_{(2),i}$ by
$$f_{(1)} = \sum_{i=1}^3 f_{(1),i} Y_{(1),i},\quad f_{(2)} = \sum_{i=1}^5 f_{(2),i} Y_{(2),i}.$$
In particular, we will often use the decomposition
\begin{equation}\label{eq:general.decomposition}
f = f_{(0)} + \sum_{i=1}^3 f_{(1),i} Y_{(1),i} + \sum_{i=1}^5 f_{(2),i} Y_{(2),i} + f_{(\geq 3)}.
\end{equation}
Note that according to our conventions, $f_{(1)}$ and $f_{(2)}$ depend on the angular variables, while $f_{(1),i}$ and $f_{(2),i}$ depend on $(t,r)$ alone.
\end{enumerate}
\end{definition}

We remind the reader that $u_{(0)}$ and $u_{(1)}$ (defined in Definition~\ref{def:spherical.harmonics}) are not to be confused with the initial functions $u_0$ and $u_1$!

\begin{proposition}\label{prop:rigidity.conserved.quantities}
In the region $t \geq 20$ and $r \leq t - 20$, the following holds:
    \begin{equation}
        \begin{split}
            &\:(\rd_ t+ \rd_r)(rv_{(0)})(t,r) \\
            =&\: -\f {\ep^2 }{8 \pi } \int_{\mathbb S^2}\int_{-\infty}^\infty G(\omega') A^2 (p,\omega')\Big(\f {t+r}2 \Big)^{-1+\frac{\ep}{2}G(\omega') A(p,\omega')} J(p,\omg';\ep) \ \ud p\, \ud \mathring{\slashed\sigma}(\omg'),
        \end{split}
    \end{equation}
    \begin{equation}
        \begin{split}
        &\: r^{-2} (\rd_t + \rd_r) \Big(r^2(\rd_t + \rd_r)(rv_{(1),i})\Big)(t,r) \\
        =&\: -\f { \ep^2 }{2}  \int_{\mathbb S^2}\int_{-\infty}^\infty Y_{(1),i}(\omg') G(\omega') A^2 (p,\omega') \Big(1 + \frac{\ep}{2}G(\omega') A(p,\omega')\Big)\Big(\f {t+r}2 \Big)^{-2+\frac{\ep}{2}G(\omega') A(p,\omega')} J(p,\omg';\ep) \ \ud p\, \ud \mathring{\slashed\sigma}(\omg'),
        \end{split}
    \end{equation}
	and
    \begin{equation}
        \begin{split}
        &\: r^{-4} (\rd_t + \rd_r) \Bigg( \Big(r^2(\rd_t + \rd_r)\Big)^2(rv_{(2),i})\Bigg)(t,r) \\
        =&\: -\f {\ep^2 }{2 }  \int_{\mathbb S^2}\int_{-\infty}^\infty Y_{(2),i}(\omg') G(\omega')A^2 (p,\omega') \Big(1 + \frac{\ep}{2}G(\omega') A(p,\omega')\Big)\\
        &\: \qquad\qquad\qquad\qquad\qquad \times \Big(2 + \frac{\ep}{2}G(\omega') A(p,\omega')\Big)\Big(\f {t+r}2 \Big)^{-3+\frac{\ep}{2}G(\omega')A(p,\omega')} J(p,\omg';\ep) \ \ud p\, \ud \mathring{\slashed\sigma}(\omg').
        \end{split}
    \end{equation}
    where for brevity, we have written $A (p,\omega')=A (p,\omega';\eps)$.
\end{proposition}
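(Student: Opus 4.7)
The plan is to prove each identity by showing the left-hand side is annihilated by $\ul L := \rd_t - \rd_r$ on $\{0 < r < t\}$ (so it depends only on $t+r$), and then evaluating on the outgoing cone $\{t = r\}$ using \eqref{eq:v.def} and \eqref{eq:frkL.def}.

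\textbf{Step 1 (null transport).} Set $L := \rd_t + \rd_r$, so $[L, \ul L] = 0$, $Lr = 1$, $\ul L r = -1$. Writing $\psi_{\ell, i} := r v_{(\ell), i}$, the equation $\Box_\bfm v = 0$ restricted to the $\ell$-th harmonic eigenspace of $-\mathring{\slashed{\Delta}}$ becomes
\begin{equation*}
\ul L L \psi_{\ell, i} = - \f{\ell(\ell+1)}{r^{2}} \psi_{\ell, i} \qquad \text{on }\{r > 0\}.
\end{equation*}
The key claim is that, for $\ell \in \{0, 1, 2\}$, $\ul L \big[ r^{-2\ell} L (r^{2} L)^{\ell} \psi_{\ell, i} \big] \equiv 0$. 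The case $\ell = 0$ is immediate. For $\ell = 1$: first, $\ul L(r^{2} L \psi_{1, i}) = -2r L\psi_{1, i} - 2 \psi_{1, i}$ by the equation; then $\ul L L(r^{2} L \psi_{1, i}) = -4 L\psi_{1, i} - 2r L^{2}\psi_{1, i}$ via $[L, \ul L] = 0$; finally expanding $L(r^{2} L \psi_{1, i}) = 2r L\psi_{1, i} + r^{2} L^{2} \psi_{1, i}$, all terms cancel. Case $\ell = 2$ is the same bookkeeping iterated once more.

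\textbf{Step 2 (reduction to the cone).} In $\{t \geq 20,\ r \leq t-20\}$, the points $(t, r)$ and $(T, T)$ with $T := \f{t+r}{2} \geq 10$ lie on the same $\ul L$-orbit, and this orbit remains in $\{r \geq T > 0\}$. By Step 1, each left-hand side at $(t, r)$ equals its value at $(T, T)$.

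\textbf{Step 3 (cone data and integration).} Since $L$ has no angular coefficients, it commutes with $\bbS_{(\ell)}$, so from \eqref{eq:v.def} we obtain, for $t \geq 10$,
\begin{equation*}
(L \psi_{\ell, i})(t, t) = \f{\frkL_{(\ell), i}(t)}{t}, \qquad \frkL_{(\ell), i}(t) := \int_{\bbS^{2}} \frkL(t, \omg)\, Y_{(\ell), i}(\omg) \, \ud \mathring{\slashed\sigma}(\omg),
\end{equation*}
with the convention $\frkL_{(0)}(t) = \f{1}{4\pi}\int \frkL(t, \omg)\, \ud \mathring{\slashed\sigma}$. Since $\f{d}{dt}[f(t, t)] = (L f)(t, t)$ along the cone, iteration gives $(L^{k} \psi_{\ell, i})(t, t) = \f{d^{k-1}}{dt^{k-1}}\big[\f{\frkL_{(\ell), i}(t)}{t}\big]$ for $k \geq 1$. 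Expanding
\begin{equation*}
r^{-2} L(r^{2} L)\psi \big|_{r = t = T} = \tfrac{2}{T}L\psi + L^{2}\psi, \qquad r^{-4} L(r^{2} L)^{2} \psi \big|_{r = t = T} = \tfrac{6}{T^{2}}L\psi + \tfrac{6}{T}L^{2}\psi + L^{3}\psi,
\end{equation*}
and substituting the cone values, one differentiates under the integral sign in \eqref{eq:frkL.def}. With $\alp := \f{\ep}{2} G(\omg') A(p, \omg')$, the factor $t^{\alp}$ in the integrand produces $(\alp - 1)\cdots(\alp - k + 1)\, t^{\alp - k}$ after $k$ $t$-derivatives, and the resulting polynomials in $\alp$ inside the integral are:
\begin{align*}
\ell = 0 &: \quad 1;\\
\ell = 1 &: \quad 2 + (\alp - 1) = 1 + \alp;\\
\ell = 2 &: \quad 6 + 6(\alp - 1) + (\alp - 1)(\alp - 2) = \alp^{2} + 3\alp + 2 = (1 + \alp)(2 + \alp).
\end{align*}
The overall power of $T$ is $\alp - \ell - 1$, matching $-\ell - 1 + \f{\ep}{2}GA$ in the statement. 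Setting $T = \f{t+r}{2}$ recovers the three claimed formulas.

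\textbf{Main obstacle.} There is no conceptual difficulty; the only slightly laborious computations are the null-transport identity in Step 1 for $\ell = 2$ (iterating the use of $[L, \ul L] = 0$ and the wave equation) and the verification of the factorization $(1 + \alp)(2 + \alp)$ for $\ell = 2$ in Step 3.
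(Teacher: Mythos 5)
Your proposal is correct and follows essentially the same route as the paper: one identifies the quantities annihilated by $\rd_t - \rd_r$ for the spherical modes $\ell = 0, 1, 2$ (the paper cites these identities from \cite[eq.~(6.6)]{LO}, while you derive them directly from $\ul L L \psi_{\ell} = -\ell(\ell+1)r^{-2}\psi_\ell$), transports them to the cone $\set{t=r}$ (the paper phrases this via the strong Huygens principle, you via integration along $\ul L$-characteristics), and then differentiates the characteristic data from \eqref{eq:v.def} and \eqref{eq:frkL.def} in $t$. Your polynomial factorizations $1$, $1+\alp$, $(1+\alp)(2+\alp)$ match the paper's formulas. One small slip: on the segment joining $(T,T)$ to $(t,r)$ the $r$-coordinate decreases from $T$ to $r$, so the orbit lies in $\set{0 < r \leq T}$, not $\set{r \geq T > 0}$ as you wrote; the relevant point (that $r$ stays positive, so the transport identity applies, with the $r=0$ endpoint handled by continuity) is unaffected.
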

\begin{proof}
The main observation is that whenever $\Box_\bfm w = 0$,
\begin{align}
(\rd_t -\rd_r)(\rd_ t+ \rd_r)(rw_{(0)}) = &\: 0, \label{eq:conservation.0.mode} \\
(\rd_t -\rd_r)\Big( r^{-2} (\rd_t + \rd_r) \left(r^2(\rd_t + \rd_r)(rw_{(1),i})\right) \Big) = &\: 0, \quad i = 1,2,3, \label{eq:conservation.1.mode}\\
(\rd_t -\rd_r)\Big( r^{-4} (\rd_t + \rd_r) \left( \left(r^2(\rd_t + \rd_r)\right)^2(rw_{(2),i})\right)\Big) = &\: 0,\quad i = 1,2,3,4,5. \label{eq:conservation.2.mode}
\end{align}
The formulae \eqref{eq:conservation.0.mode}--\eqref{eq:conservation.2.mode} above are explicit computations that follow from
$$- \rd_{tt}^2 w_{(\ell)} + \rd_{rr}^2 w_{(\ell)} - \f {\ell(\ell+1)}{r^2} w_{(\ell)} = 0.$$
See \cite[equation (6.6)]{LO} for details of this computation. Combining \eqref{eq:conservation.0.mode}--\eqref{eq:conservation.2.mode}, the condition \eqref{eq:v.def} for the initial data for $v$, and the strong Huygens principle (Lemma~\ref{jl:lem:Huygens}), we obtain the desired conclusion. \qedhere
\end{proof}

Suitably differentiating the expressions from Proposition~\ref{prop:rigidity.conserved.quantities} above, we obtain the following:
\begin{proposition}\label{prop:A.in.terms.of.v}
Let
\begin{equation}\label{eq:frkA.def}
\frkA^2(p,\omg';\ep) = A^2 (p,\omega';\ep) \Big(1 + \frac{\ep}{2}G(\omega') A(p,\omega';\ep)\Big) \Big(2 + \frac{\ep}{2}G(\omega') A(p,\omega';\ep)\Big) J(p,\omg';\ep).
\end{equation}
(Note that the right-hand side is manifestly non-negative when $\ep$ is sufficiently small and thus the square root is well-defined.) Then the following holds:
\begin{enumerate}
\item
\begin{equation}\label{eq:A.in.terms.of.v.0}
\begin{split}
&\: \int_{\mathbb S^2} \int_{-\infty}^\infty G(\omg') \frkA^2(p,\omega';\ep) \Big(\f {t+r}2 \Big)^{-1+\frac{\ep}{2}G(\omega')A(p,\omega';\ep)} \, \ud p \,\ud \mathring{\slashed{\sigma}}(\omg') \\
= &\: -\f{8\pi }{\ep^2}\Big( \f{t+r}2\Big)^{-2}(\rd_t + \rd_r) \Bigg((\f{t+r}2)^2  (\rd_t + \rd_r) \Big((\f{t+r}2)^2(\rd_t + \rd_r)(rv_{(0)})\Big)\Bigg).
\end{split}
\end{equation}
\item For $i = 1,2,3$,
\begin{equation}\label{eq:A.in.terms.of.v.1}
\begin{split}
&\: \int_{\mathbb S^2} \int_{-\infty}^\infty Y_{(1),i}(\omega') G(\omg') \frkA^2(p,\omega';\ep) \Big(\f {t+r}2 \Big)^{-1+\frac{\ep}{2}G(\omega')A(p,\omega';\ep)} \, \ud p \,\ud \mathring{\slashed{\sigma}}(\omg') \\
= &\: -\f{2 }{\ep^2}\Big( \f{t+r}2\Big)^{-2}(\rd_t + \rd_r) \Bigg(\f{(t+r)^4}{2^4r^2}  (\rd_t + \rd_r) \Big(r^2(\rd_t + \rd_r)(rv_{(1),i})\Big)\Bigg).
\end{split}
\end{equation}
\item For $i = 1,\cdots, 5$,
\begin{equation}\label{eq:A.in.terms.of.v.2}
\begin{split}
&\: \int_{\mathbb S^2} \int_{-\infty}^\infty Y_{(2),i}(\omega') G(\omg') \frkA^2(p,\omg';\ep) \Big(\f {t+r}2 \Big)^{-1+\frac{\ep}{2}G(\omega')A(p,\omega';\ep)} \, \ud p \,\ud \mathring{\slashed{\sigma}}(\omg') \\
= &\: -\f{2 }{\ep^2} r^{-4}\Big( \f{t+r}2\Big)^{2} (\rd_t + \rd_r) \Bigg( \Big(r^2(\rd_t + \rd_r)\Big)^2(rv_{(2),i})\Bigg).
\end{split}
\end{equation}
\end{enumerate}
\end{proposition}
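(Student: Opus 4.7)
The plan is to derive each of the three identities in Proposition~\ref{prop:A.in.terms.of.v} directly from the corresponding formula in Proposition~\ref{prop:rigidity.conserved.quantities} by applying $(\rd_t+\rd_r)$ judiciously and multiplying by powers of $\rho := \f{t+r}{2}$ and $r$. The whole argument is a bookkeeping exercise, so the main (minor) obstacle is organizing the factors of $(1+\sigma)$, $(2+\sigma)$, and $\rho$ correctly, where $\sigma := \f{\eps}{2} G(\omg') A(p,\omg';\eps)$.

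The key observation is that $A$, $J$, $G$, and hence $\sigma$, depend only on $(p, \omg', \eps)$ and \emph{not} on $(t, r)$. Therefore, differentiating under the integral (which is justified by the uniform-in-compact-sets bounds $|A|,|\rd_\omg A| \aleq \brk{p}^{-1+C\eps}$ and $|J|\aleq \brk{p}^{C\eps}$ from \eqref{prevac:esta} and the definition of $J$) yields the identity
\begin{equation*}
(\rd_t+\rd_r)\int G(\omg') F(p,\omg';\eps) \rho^{\beta+\sigma}\, \ud p\, \ud\mathring{\slashed{\sigma}}(\omg')
= \int G(\omg') F(p,\omg';\eps)\,(\beta+\sigma)\, \rho^{\beta-1+\sigma} \, \ud p\, \ud\mathring{\slashed{\sigma}}(\omg')
\end{equation*}
for any constant $\beta$ and integrable factor $F$. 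Multiplying a function of the form $\Phi \rho^{\beta+\sigma}$ by $\rho^{k}$ simply shifts $\beta \mapsto \beta+k$, and applying $(\rd_t+\rd_r)$ produces the extra factor $(\beta+k+\sigma)$ while dropping the exponent by one.

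For \eqref{eq:A.in.terms.of.v.0}, I start from $f := (\rd_t+\rd_r)(r v_{(0)}) = -\f{\eps^2}{8\pi}\int G A^2\, \rho^{-1+\sigma} J$. Multiplying by $\rho^2$ yields exponent $1+\sigma$; applying $(\rd_t+\rd_r)$ brings down $(1+\sigma)$ and drops the exponent to $\sigma$; multiplying by $\rho^2$ and differentiating once more introduces $(2+\sigma)$ and returns the exponent to $-1+\sigma$. Dividing through by $-\f{\eps^2}{8\pi}\rho^2$ and using $\frkA^2 = A^2(1+\sigma)(2+\sigma) J$ yields \eqref{eq:A.in.terms.of.v.0}. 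For \eqref{eq:A.in.terms.of.v.1}, Proposition~\ref{prop:rigidity.conserved.quantities} provides $r^{-2}(\rd_t+\rd_r)(r^2(\rd_t+\rd_r)(rv_{(1),i})) = -\f{\eps^2}{2}\int Y_{(1),i} G A^2 (1+\sigma)\rho^{-2+\sigma} J$; multiplying by $\rho^4/r^2$ raises the exponent to $2+\sigma$, then applying $(\rd_t+\rd_r)$ produces the missing factor $(2+\sigma)$ and drops the exponent to $1+\sigma$. Dividing by $-\f{\eps^2}{2}\rho^2$ gives \eqref{eq:A.in.terms.of.v.1}.

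For \eqref{eq:A.in.terms.of.v.2}, no further differentiation is needed: the $\ell=2$ identity from Proposition~\ref{prop:rigidity.conserved.quantities} already carries both factors $(1+\sigma)(2+\sigma)$, with exponent $\rho^{-3+\sigma}$. A single multiplication by $-\f{2}{\eps^2}\rho^2$ adjusts the exponent to $-1+\sigma$ and yields the claimed formula. In all three cases one must finally note that $\frkA^2 = A^2(1+\sigma)(2+\sigma) J \geq 0$ for $\eps$ sufficiently small, which follows from the bound $\abs{\sigma} \aleq \eps \brk{p}^{-1+C\eps}$ (a consequence of \eqref{prevac:esta} and the smoothness of $G$), so that the square root defining $\frkA$ is unambiguous.
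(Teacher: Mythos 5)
Your proposal is correct and follows essentially the same route as the paper, which simply states that the identities follow by ``suitably differentiating the expressions from Proposition~\ref{prop:rigidity.conserved.quantities}.'' Two very minor bookkeeping slips in the narration: in case (1) the exponent reaches $1+\sigma$ after the second $(\rd_t+\rd_r)$, and it is the subsequent division by $\rho^2$ that brings it to $-1+\sigma$; and in case (2) you should multiply the identity of Proposition~\ref{prop:rigidity.conserved.quantities} by $\rho^4$ (not $\rho^4/r^2$), so that the left-hand side becomes $\tfrac{\rho^4}{r^2}(\rd_t+\rd_r)(r^2(\rd_t+\rd_r)(rv_{(1),i}))$ and the $r$-dependence of the right-hand side lives entirely in $\rho^{2+\sigma}$ before the final $(\rd_t+\rd_r)$ is applied. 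Neither affects the conclusion.
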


\subsection{Rigidity of decay rate in the self-similar region (Proof of Corollary~\ref{cor:main.1})}
The following simple calculus lemma is an adaption of \cite[Chapter 5 Problem 15]{Rudin} to a finite interval.
\begin{lemma}\label{lem:Rudin}
Let $f: [a,b]\to \mathbb R$ be a $C^2$ function. Then the following estimate holds for any $d \leq \f {b-a}2$:
$$\| f' \|_{L^\i([a,b])} \leq \f{2}{d} \| f \|_{L^\i([a,b])} + \f{d}{2}\| f'' \|_{L^\i([a,b])}.$$
\end{lemma}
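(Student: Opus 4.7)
The plan is to prove the bound pointwise for each $x \in [a,b]$ by comparing $f$ at $x$ with its value at a nearby point $x + h$, where $|h| = d$ and the sign of $h$ is chosen so that $x+h \in [a,b]$. Since $d \leq \frac{b-a}{2}$, picking $h = d$ when $x \leq \frac{a+b}{2}$ and $h = -d$ otherwise keeps $x+h$ inside $[a,b]$; this is exactly what the hypothesis $d \leq \frac{b-a}{2}$ is there to ensure.

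With such an $h$, I would apply Taylor's theorem with Lagrange remainder at $x$, namely
\begin{equation*}
f(x+h) = f(x) + f'(x) h + \frac{1}{2} f''(\xi) h^{2}
\end{equation*}
for some $\xi \in [a,b]$ between $x$ and $x+h$, solve for $f'(x)$, and take absolute values to get
\begin{equation*}
|f'(x)| \leq \frac{|f(x+h)| + |f(x)|}{|h|} + \frac{|h|}{2} |f''(\xi)| \leq \frac{2}{d} \|f\|_{L^\infty([a,b])} + \frac{d}{2} \|f''\|_{L^\infty([a,b])}.
\end{equation*}
Taking the supremum over $x \in [a,b]$ yields the stated inequality.

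There is essentially no obstacle: the only thing to check is that for every $x \in [a,b]$ the displacement $h$ can be chosen with $|h|=d$ while keeping $x+h \in [a,b]$, which is immediate from the constraint $d \leq \frac{b-a}{2}$ and the above case split. This is the finite-interval analogue of the standard argument for the Landau--Kolmogorov inequality on the half-line (as in Rudin's problem), with the condition on $d$ replacing the unrestricted choice of $h$ available in the infinite setting.
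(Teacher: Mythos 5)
Your proof is correct and follows essentially the same approach as the paper: both apply Taylor's theorem with Lagrange remainder at a point at distance exactly $d$ from $x$ (guaranteed to exist in $[a,b]$ by the hypothesis $d \leq \frac{b-a}{2}$), solve for $f'(x)$, and take absolute values. The only cosmetic difference is that you spell out the choice of sign of $h$ via a case split on which half of the interval $x$ lies in, whereas the paper simply asserts the existence of a suitable $y$.
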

\begin{proof}
Given an $x \in [a,b]$, pick $y \in [a,b]$ such that $|x-y|= d$ (which exists since $d \leq \f{b-a}2$).

By Taylor's theorem, there exists $\xi$ in between $x$ and $y$ such that
\begin{equation}
f(y) = f(x) + f'(x)(y-x) + \f 12 f''(\xi) (y-x)^2.
\end{equation}
Reshuffling, this implies
\begin{equation}
|f'(x)| \leq \f{|f(y)| + |f(x)|}{|y-x|} + \f 12 |f''(\xi)| |y-x| \leq \f{2}{d} \| f \|_{L^\i([a,b])} + \f{d}{2}\| f'' \|_{L^\i([a,b])}.
\end{equation}
Since $x \in [a,b]$ is arbitrary, this implies the desired bound. \qedhere
\end{proof}

We are now ready to prove Corollary~\ref{cor:main.1}.
\begin{proof}[Proof of Corollary~\ref{cor:main.1}]
In this proof, we allow all implicit constants to depend on $C_1$.

\pfstep{Step~1: Estimates for derivatives of $v$} We decompose $u$ and $v$ as in \eqref{eq:general.decomposition}. We will introduce the schematic notation that $u_{\bullet}$ denotes any of $\{u_{(0)}, u_{(1),i}, u_{(2),i}\}$, and similarly for $v_{\bullet}$.

To apply Lemma~\ref{lem:Rudin}, we will think of $u_{\bullet}$ as a one-variable function: for every fixed $(q_{\bfm} = t-r,\omg)$ (with $q_{\bfm} \geq 1$ sufficiently large), consider $u_{\bullet}(\sigma) = u_{\bullet}(t=\sigma , r= \sigma - q_{\bfm}, \omg)$ as a one-variable function with $u_{\bullet}: [(1-c_1)^{-1} q_\bfm, (1-c_2)^{-1} q_\bfm]$.

By \eqref{dy:est:ptw:lindblad} and Lemma~\ref{lem:vector.fields}, we know that there is an $\ep_0$ such that the following holds for all $\ep \in (0,\ep_0)$:
\begin{equation}\label{eq:main.cor.uell.fast.decay.vector.field.2}
\sup_{\sigma \in [(1-c_1)^{-1} q_\bfm, (1-c_2)^{-1} q_\bfm]} |u^{(k)}_{\bullet}|(\sigma) \ls \brk{q_\bfm}^{-1-k+C_*\ep},\quad k=1,2,3,
\end{equation}
where the superscripts ${}^{(k)}$ denotes the order of derivatives. We have labeled the constant by $C_*$ and it will be considered fixed for this proof. This constant depends only on $u_0$, $u_1$ and $g^{\alp\bt}$. We want to improve the decay in $q_\bfm$ for the higher derivatives. 

We now fix $C_0$ in the statement of the corollary.  Using \eqref{dy:est:hatA}, we can choose $C_0$ sufficiently large so that
\begin{equation}\label{eq:choose.C0}
\inf_{\omg' \in \bbS^2} \Big(\f{C_0}8-C_*+G(\omega')A(p,\omega';\ep)\Big) \geq 0,
\end{equation}
uniformly for all small enough $\ep$. Note that indeed $C_0$ depends on $u_0$, $u_1$ and $g^{\alp\bt}$.

By projecting $\bbS_{(\leq 2)} u$ to individual spherical harmonics, \eqref{eq:main.cor.assumption} implies that
\begin{equation}\label{eq:main.cor.uell.fast.decay}
\sup_{\sigma \in [(1-c_1)^{-1} q_\bfm, (1-c_2)^{-1} q_\bfm]} |u_{\bullet}|(\sigma) \ls \brk{q_\bfm}^{-1-C_0 \ep}.
\end{equation}

Applying Lemma~\ref{lem:Rudin} with $f = u_{\bullet}$, $[a,b] = [(1-c_1)^{-1} q_\bfm, (1-c_2)^{-1} q_\bfm]$ and $d = \f 12 \Big( (1-c_2)^{-1} - (1-c_1)^{-1} \Big)  q_\bfm^{1-\f {C_0\ep} 2}$, and using \eqref{eq:main.cor.uell.fast.decay} and \eqref{eq:main.cor.uell.fast.decay.vector.field.2}, we obtain
\begin{equation}\label{eq:main.cor.uell.fast.decay.der}
    \sup_{\sigma \in [(1-c_1)^{-1} q_\bfm, (1-c_2)^{-1} q_\bfm]} |u'_{\bullet}|(\sigma) \ls \brk{q_\bfm}^{-2-\f {C_0 \ep} 2 + C_*\ep}.
\end{equation}
We could repeat this argument, but with $f = u_{\bullet}'$, $d = \f 12 \Big( (1-c_2)^{-1} - (1-c_1)^{-1} \Big) q_\bfm^{1-\f {C_0\ep} 4}$, and use \eqref{eq:main.cor.uell.fast.decay.der} and \eqref{eq:main.cor.uell.fast.decay.vector.field.2} to obtain
\begin{equation}
    \sup_{\sigma \in [(1-c_1)^{-1} q_\bfm, (1-c_2)^{-1} q_\bfm]} |u''_{\bullet}|(\sigma) \ls \brk{q_\bfm}^{-3-\f {C_0 \ep} 4 + C_*\ep}.
\end{equation}
The same argument can be repeated for one higher derivative so that
\begin{equation}
    \sup_{\sigma \in [(1-c_1)^{-1} q_\bfm, (1-c_2)^{-1} q_\bfm]} |u'''_{\bullet}|(\sigma) \ls \brk{q_\bfm}^{-4-\f {C_0 \ep} 8 + C_*\ep}.
\end{equation}

We now change back to the original coordinates and note that in the region of interest $q_\bfm$ is comparable to $t$. Hence, we conclude that
\begin{equation}\label{eq:self.similar.zone.extra.decay.u}
\sup_{r: c_1 t \leq r \leq c_2 t} |(\rd_t+\rd_r)^{k} u_{\bullet}|(t,r) \ls \brk{t}^{-1-k-2^{-k} C_0 \ep + C_* \ep},\quad k = 0,1,2,3.
\end{equation}

 Since the difference $u-v$ decays faster by Proposition~\ref{prop:main.error}, it follows that $v$ obeys a similar bound as \eqref{eq:self.similar.zone.extra.decay.u}:
\begin{equation}\label{eq:self.similar.zone.extra.decay.v}
\sup_{r: c_1 t \leq r \leq c_2 t} |(\rd_t+\rd_r)^{k} v_{\bullet}|(t,r) \ls \brk{t}^{-1-k-2^{-k} C_0 \ep + C_* \ep},\quad k = 0,1,2,3.
\end{equation}

Since $r$, $t$ and $t-r$ are all comparable in the region $\{c_1 t \leq r \leq c_2 t\}$, it follows from \eqref{eq:self.similar.zone.extra.decay.v} that
\begin{equation}\label{eq:self.similar.zone.extra.decay.v.conclude}
\sup_{r: c_1 t \leq r \leq c_2 t}|\hbox{RHS of \eqref{eq:A.in.terms.of.v.0}--\eqref{eq:A.in.terms.of.v.2}}| \ls \brk{t}^{-1-(\f{C_0}8-C_*)\ep}.
\end{equation}

\pfstep{Step~2: Anomalous decay of the key coercive quantity} Recall the definition of $\frkA^2$ in \eqref{eq:frkA.def}.

Note that $\{1\} \cup \{ Y_{(1),i} \}_{i=1}^3 \cup \{ Y_{(2), i}\}_{i=1}^5$ spans all polynomials in $\omg_1,\omg_2,\omg_3$ of degree $\leq 2$ by dimension considerations (keeping in mind that $\omg_1^2 + \omg_2^2 + \omg_3^2 \equiv 1$). Using the representation in Proposition~\ref{prop:A.in.terms.of.v}, it thus follows from \eqref{eq:self.similar.zone.extra.decay.v.conclude} that for any polynomial $p(\omg)$ on $\bbS^2$ of degree $\leq 2$,
\begin{equation}\label{eq:frkA.extra.decay.p}
\begin{split}
&\: \sup_{r: c_1 t \leq r \leq c_2 t} \Big| \int_{\mathbb S^2} \int_{-\infty}^\infty p(\omega') G(\omg') \mathfrak A^2(p,\omg';\ep) \Big(\f {t+r}2 \Big)^{-1+\frac{\ep}{2}G(\omega')A(p,\omega')} \, \ud p \,\ud \mathring{\slashed{\sigma}}(\omg') \Big|  \ls \brk{t}^{-1-(\f{C_0}8-C_*)\ep}.
\end{split}
\end{equation}

By \eqref{eq:G.intro.def}, $G(\omg')$ is a polynomial in $(\omg_1', \omg_2', \omg_3')$ of degree up to $2$. Hence, we can take $p = G$ (and then rearrange) to obtain
\begin{equation}\label{eq:main.cor.almost}
\begin{split}
&\: \sup_{r: c_1 t \leq r \leq c_2 t} \Big| \int_{\mathbb S^2} \int_{-\infty}^\infty G^2(\omega') \mathfrak A^2(p,\omg';\ep) \Big(\f {t+r}2 \Big)^{\f 12(\f{C_0}8-C_*)\ep+\frac{\ep}{2}G(\omega')A(p,\omega')} \, \ud p \,\ud \mathring{\slashed{\sigma}}(\omg') \Big| \ls \brk{t}^{-\f 12 (\f{C_0}8-C_*)\ep}.
\end{split}
\end{equation}

\pfstep{Step~3: Conclusion of the argument} For the sake of contradiction, suppose now that $A$ is not identically vanishing. Then, recalling the definition of $\frkA$ (see \eqref{eq:frkA.def}), we see that by continuity of $A$, there is an open set $\calU \subset \mathbb R\times \mathbb S^2$ in which $|\frkA| \geq b$ for some $b>0$. Since $G(\omg')$ is a non-zero polynomial, its zero set is a union of embedded curves on $\mathbb S^2$. In particular, by restricting $\calU$ further, we can assume that $|G(\omg')|\geq d>0$ in $\calU$. Hence, using also \eqref{eq:choose.C0}, we conclude that if $\f{t+r}2 \geq 1$, then
\begin{equation}
\hbox{LHS of \eqref{eq:main.cor.almost}} \geq b^2 d^2 \int_{\calU}  \, \ud p \,\ud \mathring{\slashed{\sigma}}(\omg').
\end{equation}
In particular, this is uniformly bounded below as $t\to \infty$. On the other hand, clearly
\begin{equation}
\lim_{t\to \infty} \hbox{RHS of \eqref{eq:main.cor.almost}} = 0.
\end{equation}
This is thus a contradiction. Hence, $A$ vanishes identically. We finally use that $A \equiv 0$ implies $u \equiv 0$, which was proven in \cite[Section~6.2.4]{MR4772266}. \qedhere
\end{proof}

\subsection{Rigidity of decay rate in a finite-$r$ region (Proof of Corollary~\ref{cor:main.2})}

As for the proof of Corollary~\ref{cor:main.1}, the key to Corollary~\ref{cor:main.2} will be to establish an analogue of \eqref{eq:self.similar.zone.extra.decay.v.conclude}. The present case is more subtle because, as we will see in the proof, Lemma~\ref{lem:Rudin} only gives improvements in the higher $\rd_t$ derivatives but not the higher $\rd_r$ derivatives.

\begin{lemma}\label{lem:finite.r.0}
    Let $R>0$, $\eta \in (0,1)$ be fixed. Denote by $C_*>0$ a constant such that
    \begin{align}
        \sup_{r\leq R}|(t\rd_t)^a(r\rd_r)^b (u_{(0)}-v_{(0)})| \ls &\: \ep \eta^{-2} \brk{t}^{-\f 32 + \f \eta 2+C_*\ep},\quad a+b \leq 4, \label{eq:vector.field.in.r.u-v.C_*}\\
            \sup_{r\leq R}|(t\rd_t)^a(r\rd_r)^b u_{(0)}| \ls &\: \ep \brk{t}^{-1+C_*\ep},\quad a+b \leq 4, \label{eq:vector.field.in.r.u.C_*}     \\
            \sup_{r\leq R}|(t\rd_t)^a(r\rd_r)^b \Box_\bfm u_{(0)}|\ls &\: \ep^2 \brk{t}^{-4+C_*\ep},\quad a+b \leq 1. \label{eq:vector.field.wave.in.r.u.C_*}
        \end{align}
        Such a $C_*$ exists for $\ep \ll 1$ by Theorem~\ref{thm:main},  \eqref{dy:est:ptw:lindblad} and Lemma~\ref{lem:Boxu.est} (and the fact that $t\rd_t$, $S$ and $\Box_\bfm$ all commute with $\bbS_{(0)}$).

    Suppose $\sup_{r \leq R} |u_{(0)}|\leq C_1 \brk{t}^{-1-C_0\ep}$. Then
    \begin{equation}\label{eq:finite.r.bound.RHS.0}
        \sup_{\f R2 \leq r \leq R} |\hbox{RHS of \eqref{eq:A.in.terms.of.v.0}}|\ls \brk{t}^{-1-(2^{-3} C_0-C_*)\ep},
    \end{equation}
    where the implicit constant depends on $u_0$, $u_1$, $\eta$, $R$, $C_*$ and $C_0$.
\end{lemma}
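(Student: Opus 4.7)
The plan is to bound the RHS of \eqref{eq:A.in.terms.of.v.0} by expanding it into a weighted combination of $V_k := (\rd_t + \rd_r)^k(rv_{(0)})$ for $k = 1, 2, 3$, and then estimating each $V_k$ using the decay hypothesis. The key structural observation is that, since $v_{(0)}$ is spherically symmetric and $\Box_\bfm v_{(0)} = 0$, the quantity $rv_{(0)}$ satisfies the $(1{+}1)$-dimensional wave equation $(\rd_t^2 - \rd_r^2)(rv_{(0)}) = 0$. Consequently, $V_1(t, r)$ depends only on $\rho := (t+r)/2$, so $V_2 = 2\rd_t V_1$ and $V_3 = 4\rd_t^2 V_1$; a direct computation then reduces the RHS of \eqref{eq:A.in.terms.of.v.0} to $-\frac{8\pi}{\ep^2}\bigl(6V_1 + 6\rho V_2 + \rho^2 V_3\bigr)$, so it suffices to bound $V_1$, $\rho V_2$, and $\rho^2 V_3$ pointwise for $r \in [R/2, R]$.

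First, for each fixed $r \in [0, R]$, I view $v_{(0)}(\cdot, r)$ as a function of $t$ on (say) $[t/2, 3t/2]$ and apply Lemma~\ref{lem:Rudin} iteratively, using the function-level bound $|v_{(0)}| \ls \brk{t}^{-1-C_0\ep}$ (from the hypothesis together with \eqref{eq:vector.field.in.r.u-v.C_*}) and the higher-derivative bounds $|\rd_t^k v_{(0)}| \ls \brk{t}^{-1-k+C_*\ep}$ for $k \leq 4$ (from \eqref{eq:vector.field.in.r.u.C_*}); the standard balance of the parameter $d$ at each iteration yields $|\rd_t^k v_{(0)}| \ls \brk{t}^{-1-k-2^{-k}C_0\ep + C_*\ep}$ for $k = 1, 2, 3$. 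Next, to transfer these improvements to $\rd_r$-derivatives, I exploit the wave equation in the spherically symmetric form $r^{-2}\rd_r(r^2 \rd_r v_{(0)}) = \rd_t^2 v_{(0)}$; since $\rd_r v_{(0)}(t, 0) = 0$ by smoothness at the origin, integrating from $0$ to $r$ gives the representation $\rd_r\rd_t^j v_{(0)}(t, r) = r^{-2}\int_0^r (r')^2 \rd_t^{j+2} v_{(0)}(t, r')\, dr'$, whence $|\rd_r\rd_t^j v_{(0)}(t, r)| \ls r\, \brk{t}^{-3-j-2^{-j-2}C_0\ep + C_*\ep}$ for $r \leq R$ and $j = 0, 1$.

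Finally, expanding $V_1 = v_{(0)} + r(\rd_t + \rd_r) v_{(0)}$, $V_2 = 2\rd_t V_1$, and $V_3 = 4\rd_t^2 V_1$ via the product rule and substituting the bounds of the previous two steps (the factors of $r \leq R$ keeping the corresponding terms under control), one obtains, for $r \in [R/2, R]$, $|V_1| \ls \brk{t}^{-1-C_0\ep}$, $|V_2| \ls \brk{t}^{-2-C_0\ep/2 + C_*\ep}$, and $|V_3| \ls \brk{t}^{-3-C_0\ep/4 + C_*\ep}$. Plugging these into $\ep^{-2}\bigl(6V_1 + 6\rho V_2 + \rho^2 V_3\bigr)$ with $\rho \sim t$, the dominant $\rho^2 V_3$ term produces the estimate \eqref{eq:finite.r.bound.RHS.0}; the $\ep^{-2}$ prefactor is absorbed into the slack between the exponent $C_0/4$ that Steps~1-3 actually deliver and the weaker $2^{-3}C_0$ required by the lemma. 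The main obstacle is the absence of a direct decay improvement for $\rd_r$-derivatives of $v_{(0)}$ from a hypothesis controlling only pointwise decay in $t$; the $(1{+}1)$-dimensional wave equation for $rv_{(0)}$, together with the regularity of $v_{(0)}$ at the origin, supplies the missing bridge in Step~2 by trading the improved $\rd_t^{j+2}$ decay of Step~1 for a corresponding $\rd_r\rd_t^j$ bound with a harmless factor of $r$.
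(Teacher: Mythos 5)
Your argument is correct but takes a genuinely different route from the paper's. The paper works directly with $u_{(0)}$ and applies Lemma~\ref{lem:Rudin} \emph{twice}: first in $t$ (to upgrade $\rd_t^b u_{(0)}$), then again in $r$ on $[R/2,R]$ (to upgrade $\rd_r^a\rd_t^b u_{(0)}$, at the cost of halving the gain in the $C_0\ep$-exponent per $\rd_r$-derivative); it then rewrites $(\rd_t+\rd_r)^k(r u_{(0)})$ via $r\Box_\bfm u_{(0)} = -(\rd_t-\rd_r)(\rd_t+\rd_r)(r u_{(0)})$ to convert $(\rd_t+\rd_r)$'s into $\rd_t$'s plus error terms, which is why \eqref{eq:vector.field.wave.in.r.u.C_*} appears among the hypotheses. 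You instead work with $v_{(0)}$, for which $\Box_\bfm v_{(0)} = 0$ holds \emph{exactly}, so $V_1 := (\rd_t+\rd_r)(r v_{(0)})$ is a function of $t+r$ alone and the right-hand side of \eqref{eq:A.in.terms.of.v.0} collapses to $-\tfrac{8\pi}{\ep^2}(6V_1 + 6\rho V_2 + \rho^2 V_3)$ with $V_k = 2^{k-1}\rd_t^{k-1}V_1$. Your device for transferring $\rd_t$-gains to $\rd_r$-derivatives --- integrating $\rd_r(r^2\rd_r v_{(0)}) = r^2\rd_t^2 v_{(0)}$ from the regular origin to obtain $\rd_r\rd_t^j v_{(0)}(t,r) = r^{-2}\int_0^r (r')^2\rd_t^{j+2}v_{(0)}\,dr'$ --- is cleaner and strictly stronger than the paper's Rudin-in-$r$ step: it produces the exponent $2^{-2}C_0\ep$ rather than the $2^{-3}C_0\ep$ the lemma asks for, and it renders \eqref{eq:vector.field.wave.in.r.u.C_*} unnecessary (you only need \eqref{eq:vector.field.in.r.u-v.C_*} and \eqref{eq:vector.field.in.r.u.C_*} together to control $v_{(0)}$). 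The main tradeoff is that this origin-integration device does not extend as cleanly to the $\ell\geq 1$ modes needed for Lemma~\ref{lem:finite.r.1.2}, whereas the paper's Rudin-in-$r$ argument does.

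Two minor repairs. First, the expansion of $V_3 = 4\rd_t^2 V_1$ also requires $\rd_r\rd_t^2 v_{(0)}$, which is $j=2$ and lies outside the range $j\in\{0,1\}$ that you record; the same integral formula with the \emph{unimproved} bound $|\rd_t^4 v_{(0)}|\ls\brk{t}^{-5+C_*\ep}$ gives $|\rd_r\rd_t^2 v_{(0)}|\ls r\brk{t}^{-5+C_*\ep}$, which is more than sufficient, so this is a one-line fix. Second, your assertion that the $\ep^{-2}$ prefactor is ``absorbed into the slack'' of the $\brk{t}$-exponent is not literally correct (that would need $\ep^{-2}\ls\brk{t}^{C\ep}$ uniformly in $t\geq 1$, which fails for bounded $t$); the $\ep^{-2}$ is simply carried in the implicit constant, a looseness that is also implicitly present in the paper's proof and is harmless because the lemma is used only to drive a $t\to\infty$ contradiction in the proof of Corollary~\ref{cor:main.2}.
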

\begin{proof}
From now on, we allow all implicit constants to depend on $u_0$, $u_1$, $\eta$, $R$, $C_*$ and $C_0$.

\pfstep{Step~1: Improved estimates for derivatives of $u_{(0)}$} Our goal in this step is to show that
\begin{align}\label{eq:rdrrdtu0.prelim}
    \sup_{\f R2\leq r \leq R} |\rd_r^a \rd_t^b u_{(0)}|(t,r) \ls \brk{t}^{-1 -b - 2^{-b-a} C_0\ep + C_* \ep} ,\quad a+b\leq 3.
\end{align}

We first consider the $a = 0$ case in \eqref{eq:rdrrdtu0.prelim}, i.e., we will first prove
\begin{equation}\label{eq:rdtu0.prelim}
    \sup_{r\leq R}|\rd_t^b u_{(0)}|(t,r) \ls \brk{t}^{-1 -b - 2^{-b} C_0\ep + C_* \ep} ,\quad b = 0,1,2,3.
\end{equation}
The $b=0$ case of \eqref{eq:rdtu0.prelim} is an immediate consequence of \eqref{eq:finite.r.bound.RHS.0}. For the $b=1$ case, we fix $r$ and think of this as a one-variable function. For every dyadic interval $[2^k,2^{k+1}]$, we apply Lemma~\ref{lem:Rudin} with $[a,b] = [2^k,2^{k+1}]$, $d = 2^{k(1-2^{-1}C_0\ep)}$, $f = u_{(0)}(\cdot,r)$ and use the bounds \eqref{eq:vector.field.in.r.u.C_*} to obtain
\begin{equation*}
    \begin{split}
        &\: \sup_{r\leq R, t \in [2^k,2^{k+1}]} |\rd_t u_{(0)}|(t,x) \\
        \leq &\:  2^{1-k(1-2^{-1}C_0\ep)} \Big(\sup_{r\leq R, t \in [2^k,2^{k+1}]} |u_{(0)}|(t,x) \Big) + 2^{k(1-2^{-1}C_0\ep)-1} \Big(\sup_{r\leq R, t \in [2^k,2^{k+1}]} |\rd_t^2 u_{(0)}|(t,x) \Big) \\
        \ls &\: 2^{-k(1-2^{-1}C_0\ep)} 2^{k(-1-C_0\ep)} + 2^{k(1-2^{-1}C_0\ep)} 2^{k(-3+C_*\ep)} \ls 2^{-2k-(2^{-1}C_0-C_*)k\ep}  \ls \brk{t}^{-2-2^{-1}C_0+C_*}.
    \end{split}
\end{equation*}
This gives the $b=1$ case of \eqref{eq:rdtu0.prelim}. Repeating this argument inductively for higher derivatives yields \eqref{eq:rdtu0.prelim} for $b = 0,1,2,3$.

    Our next goal is to prove \eqref{eq:rdrrdtu0.prelim}, i.e., we upgrade \eqref{eq:rdtu0.prelim} to include also the $\rd_r$ derivatives.

    In the case $a = 0$, \eqref{eq:rdrrdtu0.prelim} reduces to \eqref{eq:rdtu0.prelim}. The claim now is that for $\rd_r$ derivatives, even though we do not gain extra powers of $t$, we are still able to improve the decay rate in \eqref{eq:vector.field.in.r.u.C_*} by $2^{-1}C_0\ep$. To show this, in contrast to the above, we use Lemma~\ref{lem:Rudin} for every fixed $t$ and consider the one-variable function $u_{(0)}(t,\cdot)$. More precisely, applying Lemma~\ref{lem:Rudin} with the interval $[\f R2, R]$, $d = \min\{ \f R{4}, t^{-2^{-b-1} C_0\ep}\}$, $f = \rd_t^b u_{(0)}(t,\cdot)$ and using the bounds \eqref{eq:rdtu0.prelim}, \eqref{eq:vector.field.in.r.u.C_*}, we have
    \begin{equation*}
        \begin{split}
            &\: \sup_{\f R2 \leq r \leq R} |\rd_r \rd_t^b u_{(0)}|(t,r) \\
            \ls &\: \brk{t}^{2^{-b-1} C_0\ep} \sup_{\f R2 \leq r \leq R} |\rd_t^b u_{(0)}|(t,r) + \brk{t}^{-2^{-b-1} C_0\ep} \sup_{\f R2 \leq r \leq R} |\rd_r^2 \rd_t^b u_{(0)}|(t,r) \\
            \ls &\: \brk{t}^{2^{-b-1} C_0\ep} \brk{t}^{-1 -b - 2^{-b} C_0\ep + C_* \ep} + \brk{t}^{-2^{-b-1} C_0\ep} \brk{t}^{-1-b+C_*\ep} \\
            \ls &\: \brk{t}^{-1-b- (2^{-b-1} C_0-C_*)\ep}.
        \end{split}
    \end{equation*}
    This proves \eqref{eq:rdrrdtu0.prelim} when $a = 1$. Higher $\rd_r$ derivative estimates can be derived inductively in a similar manner.

    \pfstep{Step~2: Proof of main estimate} First observe that in order to prove \eqref{eq:finite.r.bound.RHS.0}, it suffices to show that
        \begin{equation}\label{eq:R.finte.0.goal}
            \sup_{\f R2 \leq r \leq R} |(\rd_t + \rd_r)^k (r u_{(0)})|(t,r) \ls \brk{t}^{-k-2^{-3} C_0 \ep + C_* \ep}, \quad k = 1,2,3,
        \end{equation}
        and
        \begin{equation}\label{eq:R.finte.0.goal.aux}
            \sup_{\f R2 \leq r \leq R} |(\rd_t + \rd_r)^k (r (u_{(0)}-v_{(0)}))|(t,r) \ls \brk{t}^{-k-2^{-3} C_0 \ep + C_* \ep}, \quad k = 1,2,3.
        \end{equation}

        We will only prove the estimate \eqref{eq:R.finte.0.goal}. The second estimate \eqref{eq:R.finte.0.goal.aux} can be proved in exactly the same way, except that it is easier, after making the following observations:
        \begin{enumerate}
            \item The proof of \eqref{eq:R.finte.0.goal} that we give only depends on the bound for $u_{(0)}$ in \eqref{eq:rdrrdtu0.prelim} and the bound for $\Box_\bfm u_{(0)}$ in \eqref{eq:vector.field.wave.in.r.u.C_*}.
            \item $u_{(0)} - v_{(0)}$ obeys the estimate \eqref{eq:vector.field.in.r.u-v.C_*} which is stronger than the estimate \eqref{eq:rdrrdtu0.prelim} we obtained for $u_{(0)}$ in Step~1.
            \item $\Box_{\bfm} (u_{(0)} - v_{(0)})$ and $\Box_{\bfm} u_{(0)}$ obey the same estimate since $\Box_{\bfm} (u_{(0)} - v_{(0)}) = \Box_{\bfm} u_{(0)}$.
        \end{enumerate}

    We now turn to the proof of \eqref{eq:R.finte.0.goal}. When $k = 1$, this is immediate from the \eqref{eq:rdrrdtu0.prelim}. When $k = 2$, the bound in \eqref{eq:rdrrdtu0.prelim} by itself is too weak, and thus we use the wave equation. We write
    \begin{equation}\label{eq:finite.r.bound.0.wave}
        \begin{split}
            (\rd_t + \rd_r)^2 (r u_{(0)}) = r\Box_\bfm u_{(0)} + 2 (\rd_t+\rd_r)(r \rd_t u_{(0)}),
        \end{split}
    \end{equation}
        since for spherically symmetric functions $r\Box_{\bfm} u_{(0)} = -(\rd_t - \rd_r)(\rd_t + \rd_r)(r u_{(0)})$. The first term in \eqref{eq:finite.r.bound.0.wave} is much better than needed by \eqref{eq:vector.field.wave.in.r.u.C_*}, and the second term in \eqref{eq:finite.r.bound.0.wave} is also acceptable by \eqref{eq:rdrrdtu0.prelim}.

    Finally, we deal with $k=3$, again using the wave equation, i.e.,
    \begin{equation}
        \begin{split}
            &\: (\rd_t + \rd_r)^3 (r u_{(0)}) \\
            = &\: (\rd_t + \rd_r) (-\rd_t + \rd_r) (\rd_t + \rd_r)  (r u_{(0)}) + 2 (- \rd_t + \rd_r) (\rd_t + \rd_r)  (r \rd_t u_{(0)}) + 4 (\rd_t + \rd_r)  (r \rd_t^2 u_{(0)}) \\
            = &\: \underbrace{(\rd_t + \rd_r) (r \Box_\bfm u_{(0)})}_{=:I} + \underbrace{2 r \rd_t\Box_{\bfm} u_{(0)}}_{=:II} + \underbrace{4 (\rd_t + \rd_r)  (r \rd_t^2 u_{(0)})}_{=:III}.
        \end{split}
    \end{equation}
    The term $I$ and $II$ are acceptable by \eqref{eq:vector.field.wave.in.r.u.C_*}, while the term $III$ is acceptable by \eqref{eq:rdrrdtu0.prelim}. \qedhere
\end{proof}

\begin{lemma}\label{lem:finite.r.1.2}
    Let $R>0$, $\eta \in (0,1)$ be fixed. Denote by $C_*>0$ a constant such that the following holds for $\ell = 1,2$.
    \begin{align}
        \sup_{r\leq R}|(t\rd_t)^a(r\rd_r)^b (u_{(\ell)}-v_{(\ell)})| \ls_R &\: \ep \eta^{-3} \brk{t}^{-\f 32-\ell + \eta+C_*\ep},\quad a+b \leq 4, \label{eq:vector.field.in.r.u-v.C_*.ell}\\
            \sup_{r\leq R}|(t\rd_t)^a(r\rd_r)^b u_{(\ell)}| \ls_R &\: \ep \brk{t}^{-1-\ell+C_*\ep},\quad a+b \leq 4, \label{eq:vector.field.in.r.u.C_*.ell}     \\
            \sup_{r\leq R}|(t\rd_t)^a(r\rd_r)^b \Box_\bfm u_{(\ell)}| \ls_R &\: \ep^2 \brk{t}^{-4+C_*\ep},\quad a+b \leq 1. \label{eq:vector.field.wave.in.r.u.C_*.ell}
        \end{align}
        Such a $C_*$ exists for $\ep \ll 1$ by Lemma~\ref{lem:elliptic.u-v}, Lemma~\ref{lem:elliptic.v} and Lemma~\ref{lem:Boxu.est} (and the fact that $t\rd_t$, $S$ and $\Box_\bfm$ all commute with $\bbS_{(0)}$).

    Suppose $\sup_{r\leq R}|u_{(\ell)}|\leq C_1 \brk{t}^{-1-\ell-C_0\ep}$ for $\ell = 1,2$. Then
    \begin{align}
        \sup_{\f R2 \leq r \leq R} |\hbox{RHS of \eqref{eq:A.in.terms.of.v.1}}|\ls &\: \brk{t}^{-1-(2^{-3} C_0-C_*)\ep}, \label{eq:finite.r.bound.RHS.1} \\
        \sup_{\f R2 \leq r \leq R} |\hbox{RHS of \eqref{eq:A.in.terms.of.v.2}}|\ls &\: \brk{t}^{-1-(2^{-3} C_0-C_*)\ep} \label{eq:finite.r.bound.RHS.2}
    \end{align}
    where the implicit constant depends on $u_0$, $u_1$, $\eta$, $R$, $C_*$ and $C_0$.
\end{lemma}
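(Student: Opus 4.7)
My plan is to follow the three-stage strategy used in the proof of Lemma~\ref{lem:finite.r.0}, now applied to the $\ell=1,2$ spherical modes. The only genuine new ingredient is that the reduced wave equation for $r u_{(\ell),i}$ carries an extra angular term $-\ell(\ell+1) u_{(\ell),i}/r$ from the spherical Laplacian, which has to be tracked through each application of the wave equation.

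\textbf{Stage 1 (temporal Rudin iteration on $u_{(\ell)}$).} For each fixed $r \in [0, R]$, I would view $u_{(\ell)}(\cdot, r)$ as a one-variable function on a dyadic time interval $[2^k, 2^{k+1}]$. Combining the hypothesis $|u_{(\ell)}| \leq C_1 \brk{t}^{-1-\ell - C_0 \eps}$ with the second-derivative bound $|\rd_t^2 u_{(\ell)}| \aleq \brk{t}^{-3-\ell + C_* \eps}$ from \eqref{eq:vector.field.in.r.u.C_*.ell}, I would apply Lemma~\ref{lem:Rudin} with step size $d = 2^{k(1 - 2^{-1} C_0 \eps)}$ and iterate upward exactly as in Step~1 of Lemma~\ref{lem:finite.r.0} to obtain
\begin{equation*}
\sup_{r \leq R} |\rd_t^b u_{(\ell)}|(t, r) \aleq \brk{t}^{-1-\ell-b - 2^{-b} C_0 \eps + C_* \eps}, \quad b = 0, 1, 2, 3.
\end{equation*}

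\textbf{Stage 2 (spatial Rudin iteration).} For each fixed $t$, I would then apply Lemma~\ref{lem:Rudin} to $\rd_t^b u_{(\ell)}(t, \cdot)$ on $[R/2, R]$ with step size $d = \min\{R/4, \brk{t}^{-2^{-b-1} C_0 \eps}\}$, combining the Stage~1 bound with \eqref{eq:vector.field.in.r.u.C_*.ell}; induction on $a$ yields
\begin{equation*}
\sup_{R/2 \leq r \leq R} |\rd_r^a \rd_t^b u_{(\ell)}|(t, r) \aleq \brk{t}^{-1-\ell-b - 2^{-b-a} C_0 \eps + C_* \eps}, \quad a + b \leq 3.
\end{equation*}

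\textbf{Stage 3 (assembling the RHS via the wave equation).} Expanding $(r^2(\rd_t + \rd_r))^m$ by the Leibniz rule, the right-hand sides of \eqref{eq:A.in.terms.of.v.1} and \eqref{eq:A.in.terms.of.v.2} reduce, up to polynomial weights in $(t,r)$ that grow no faster than $t^{\ell+1}$ on $r \in [R/2, R]$, to linear combinations of $(\rd_t + \rd_r)^k (r^j v_{(\ell),i})$ with $k \in \{1,2,3\}$ and small $j$. By the triangle inequality together with \eqref{eq:vector.field.in.r.u-v.C_*.ell}, it would suffice to establish
\begin{equation*}
\sup_{R/2 \leq r \leq R} |(\rd_t + \rd_r)^k (r^j u_{(\ell),i})|(t, r) \aleq \brk{t}^{-1-\ell-k - 2^{-3} C_0 \eps + C_* \eps}, \quad k = 1, 2, 3,
\end{equation*}
together with the analogous bound for $u_{(\ell),i} - v_{(\ell),i}$. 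For $k=1$ both follow immediately from Stage~2 and \eqref{eq:vector.field.in.r.u-v.C_*.ell}. For $k=2,3$ I would convert $(\rd_t + \rd_r)^k$ into $\rd_t^a \rd_r^b$ and replace each $\rd_r^2$ using the reduced wave equation
\begin{equation*}
(\rd_t^2 - \rd_r^2)(r u_{(\ell),i}) = -r (\Box_\bfm u)_{(\ell),i} - \frac{\ell(\ell+1)}{r} u_{(\ell),i},
\end{equation*}
the analogue of the identity used in Lemma~\ref{lem:finite.r.0} with the new angular term. The principal $\rd_t^a \rd_r^b$ piece is controlled by Stage~2, the $r (\Box_\bfm u)_{(\ell),i}$ piece is tiny by \eqref{eq:vector.field.wave.in.r.u.C_*.ell}, and the angular piece $\ell(\ell+1) u_{(\ell),i}/r$ along with its $\rd_t, \rd_r$ derivatives (needed for higher $k$) is absorbed by Stages~1--2 since it is of the same order as $u_{(\ell),i}$ itself on $r \sim R$. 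The bound for $u_{(\ell),i} - v_{(\ell),i}$ follows by the same recipe, using \eqref{eq:vector.field.in.r.u-v.C_*.ell} in place of Stages~1--2 and the fact that $\Box_\bfm (u - v) = \Box_\bfm u$.

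\textbf{Main obstacle.} The only substantive deviation from Lemma~\ref{lem:finite.r.0} is the recurring angular contribution $\ell(\ell+1) u_{(\ell),i}/r$, which is only as small as $u_{(\ell),i}$ itself. I must verify that the gain $2^{-b-a} C_0 \eps$ obtained in Stage~2 survives up to two substitutions of the wave equation; the choice of exponent $2^{-3} C_0 \eps$ in the target bound is precisely calibrated to absorb this loss. The remaining labor is bookkeeping of the $r$-weights in \eqref{eq:A.in.terms.of.v.1}--\eqref{eq:A.in.terms.of.v.2} on $r \in [R/2, R]$ and verifying that the polynomial weights in $t$ combine with the $\brk{t}^{-1-\ell-k}$ decay of the derivative factors to produce exactly $\brk{t}^{-1 - (2^{-3} C_0 - C_*) \eps}$.
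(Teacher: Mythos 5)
Your Stages~1 and 2 match the paper's proof: they establish exactly \eqref{eq:rdrrdtu12.prelim}, obtained by the same temporal-then-spatial application of Lemma~\ref{lem:Rudin} as in Step~1 of Lemma~\ref{lem:finite.r.0}. The gap is in Stage~3.

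The intermediate target you state for Stage~3, namely $\sup_{R/2\leq r\leq R}|(\rd_t+\rd_r)^k(r^j u_{(\ell),i})| \aleq \brk{t}^{-1-\ell-k-2^{-3}C_0\eps+C_*\eps}$, does not follow from the hypotheses and is in fact false. Expanding $(\rd_t+\rd_r)^k$ produces pure $\rd_r^k$-terms, and since $r\sim R$ is bounded, $\rd_r$ gains no $t$-decay: the best Stage~2 gives is $\rd_r^k u_{(\ell),i} \aleq \brk{t}^{-1-\ell-2^{-k}C_0\eps+C_*\eps}$, which is a full $k$ powers of $t$ weaker than what you claim. Moreover, your argument that the angular term $\ell(\ell+1) u_{(\ell),i}/r$ is ``absorbed by Stages~1--2 since it is of the same order as $u_{(\ell),i}$ itself'' is precisely the problem, not the solution: it is of order $\brk{t}^{-1-\ell-C_0\eps}$, which multiplied by the weight $t^2$ in the RHS of \eqref{eq:A.in.terms.of.v.1} gives $\brk{t}^{1-\ell-C_0\eps}$, much larger than the target $\brk{t}^{-1-\dots}$ when $\ell=1$.

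What actually saves the $\ell = 1$ case is a structural cancellation in the specific combination $(\rd_t+\rd_r)\bigl(r^{-2}(\rd_t+\rd_r)(r^2(\rd_t+\rd_r)(ru_{(1)}))\bigr)$ appearing on the RHS of \eqref{eq:A.in.terms.of.v.1}: the coefficients of the dangerous terms $\phi/r^2$ and $\rd_r\phi/r$ (coming from the reduced wave equation's angular contribution) carry a factor of $(\ell-1)(\ell+2)$, which vanishes for $\ell=1$. The paper encodes this via the commutation identity \eqref{eq:Q1.ell=1} (from \cite{LO}), which replaces the double-null derivative of the conserved quantity by wave-equation terms plus one $\rd_t$ derivative. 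Your approach of expanding $(r^2(\rd_t+\rd_r))^m$ by the Leibniz rule, reducing to individual $(\rd_t+\rd_r)^k(r^j u_{(\ell),i})$, and bounding each separately destroys this cancellation, so the resulting estimates are too weak to close. You would need either to prove the identity \eqref{eq:Q1.ell=1} (or an equivalent of it) and split the outer $(\rd_t+\rd_r)$ as $2\rd_t - (-\rd_t+\rd_r)$, or to perform the expansion while carefully tracking that the angular pieces cancel. Separately, for $\ell=2$ the paper requires no wave-equation substitution at all: since $u_{(2)}$ decays like $\brk{t}^{-3}$ and the outer weight is $r^{-4}\rho^2 \sim t^2$ (not $t^3$ as you estimate), the direct Stage~2 bounds together with \eqref{eq:vector.field.in.r.u-v.C_*.ell} already suffice, and your proposed detour through the wave equation would introduce angular terms that do not cancel for $\ell=2$.
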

\begin{proof}
    Both estimates here are easier than Lemma~\ref{lem:finite.r.0} and we will be brief. As before, we allow all implicit constants to depend on $u_0$, $u_1$, $\eta$, $R$, $C_*$ and $C_0$. We first note that the following analogue of \eqref{eq:rdrrdtu0.prelim} holds:
    \begin{equation}\label{eq:rdrrdtu12.prelim}
        \sup_{\f R2\leq r \leq R} |\rd_r^a \rd_t^b u_{(\ell)}|(t,r) \ls \brk{t}^{-1 -b -\ell - 2^{-b-a} C_0\ep + C_* \ep} ,\quad a+b\leq 3, \quad \ell = 1,2.
    \end{equation}
    The proof is exactly the same as Step~1 of Lemma~\ref{lem:finite.r.0} and will be omitted. We now proceed to the proof of the main estimates \eqref{eq:finite.r.bound.RHS.1} and \eqref{eq:finite.r.bound.RHS.2}, and we will handle the second estimate first.

    \pfstep{Step~1: Proof of \eqref{eq:finite.r.bound.RHS.2}} This is the easier case. In fact, simply using \eqref{eq:rdrrdtu12.prelim} and \eqref{eq:vector.field.in.r.u-v.C_*.ell}, it already follows that
    $$\sup_{\f R2\leq r \leq R} \Bigg| r^{-4}\Big( \f{t+r}2\Big)^{2} (\rd_t + \rd_r) \Bigg( \Big(r^2(\rd_t + \rd_r)\Big)^2(ru_{(2),i})\Bigg)\Bigg|  \ls \brk{t}^{-1-(2^{-3}C_0-C_*)\ep}$$
    and
    $$\sup_{\f R2\leq r \leq R} \Bigg| r^{-4}\Big( \f{t+r}2\Big)^{2} (\rd_t + \rd_r) \Bigg( \Big(r^2(\rd_t + \rd_r)\Big)^2(r(u_{(2),i} - v_{(2),i}))\Bigg) \Bigg| \ls \brk{t}^{-1-(2^{-3}C_0-C_*)\ep}.$$
    (Note that we need not be concerned about the singular $r^{-1}$ factors since we only consider the region $r \in [\f R2,R]$. Similarly below.) The conclusion \eqref{eq:finite.r.bound.RHS.2} then follows from the triangle inequality.

    \pfstep{Step~2: Proof of \eqref{eq:finite.r.bound.RHS.1}} After using \eqref{eq:rdrrdtu12.prelim} and \eqref{eq:vector.field.in.r.u-v.C_*.ell} to discard terms that are acceptable, it remains to check the following two bounds:
    \begin{equation}\label{eq:R.finte.ell.goal}
            \sup_{\f R2 \leq r \leq R} \Big|(\rd_t + \rd_r)\Big(r^{-2} (\rd_t + \rd_r)\Big( r^2 (\rd_t + \rd_r) (r u_{(1)})\Big)\Big)\Big|(t,r) \ls \brk{t}^{-3-2^{-3} C_0 \ep + C_* \ep}
        \end{equation}
        and
        \begin{equation}\label{eq:R.finte.ell.goal.aux}
            \sup_{\f R2 \leq r \leq R} \Big|(\rd_t + \rd_r)\Big(r^{-2} (\rd_t + \rd_r) \Big( r^2 (\rd_t + \rd_r)(r (u_{(1)}-v_{(1)})\Big)\Big)\Big|(t,r) \ls \brk{t}^{-3-2^{-3} C_0 \ep + C_* \ep}.
        \end{equation}
    Aas in Step~2 of Lemma~\ref{lem:finite.r.0}, we focus on \eqref{eq:R.finte.ell.goal} since \eqref{eq:R.finte.ell.goal.aux} is similar but simpler. For this, we note that when $\ell = 1$,
    \begin{equation}\label{eq:Q1.ell=1}
        (-\rd_t + \rd_r)(r^{-2}(\rd_t + \rd_r)( r^2 (\rd_t + \rd_r) (r u_{(1)}))) = \Big( r^2(\rd_t+\rd_r)+ 2r \Big)(r \Box_{\bfm} u_{(1)}).
    \end{equation}
    Equation \eqref{eq:Q1.ell=1} can be verified by a direct computation which can be found by combining \cite[equation (5.63), equation (6.4), equation (6.6)]{LO}. Hence,
    \begin{equation}
        \begin{split}
            &\: (\rd_t + \rd_r)\Big(r^{-2} (\rd_t+\rd_r) \Big( r^2 (\rd_t + \rd_r) (r u_{(1)})\Big)\Big) \\
            = &\:  \Big( r^2(\rd_t+\rd_r)+ 2r \Big)(r \Box_{\bfm} u_{(1)}) +  2 r^{-2} (\rd_t+\rd_r) \Big( r^2 (\rd_t + \rd_r) (r \rd_t u_{(1)})\Big).
        \end{split}
    \end{equation}
    The first term is acceptable due to \eqref{eq:vector.field.wave.in.r.u.C_*.ell} and the second term is acceptable thanks to \eqref{eq:rdrrdtu12.prelim}. This concludes the proof of \eqref{eq:R.finte.ell.goal}. \qedhere
\end{proof}

We are now ready to prove Corollary~\ref{cor:main.2}.
\begin{proof}[Proof of Corollary~\ref{cor:main.2}]
   Using the bound \eqref{eq:finite.r.bound.RHS.0}, \eqref{eq:finite.r.bound.RHS.1} and \eqref{eq:finite.r.bound.RHS.2}, we have the following analogue of \eqref{eq:self.similar.zone.extra.decay.v.conclude}:
   \begin{equation}\label{eq:finite.zone.extra.decay.v.conclude}
\sup_{r: \f R2 \leq r \leq R}|\hbox{RHS of \eqref{eq:A.in.terms.of.v.0}--\eqref{eq:A.in.terms.of.v.2}}| \ls \brk{t}^{-1-(\f{C_0}8-C_*)\ep}.
\end{equation}
    The reminder of the proof then follows in exactly the same manner as Steps~2 and 3 of Corollary~\ref{cor:main.1}; we omit the details. \qedhere

\end{proof}

\subsection{Improved results in special cases (Proof of Corollary~\ref{cor:special.case})}

\begin{proof}[Proof of Corollary~\ref{cor:special.case}]
We look at the improvement for Corollary~\ref{cor:main.1}; the improvement for Corollary~\ref{cor:main.2} can be argued in a similar manner.

Following the argument in the proof of Corollary~\ref{cor:main.1}, since we only have control over the $\ell = 0$ mode, instead of \eqref{eq:frkA.extra.decay.p}, we only have the weaker conclusion
\begin{equation}
\begin{split}
&\: \sup_{r: c_1 t \leq r \leq c_2 t} \Big| \int_{\mathbb S^2} \int_{-\infty}^\infty G(\omg') \mathfrak A^2(p,\omg';\ep) \Big(\f {t+r}2 \Big)^{-1+\frac{\ep}{2}G(\omega')A(p,\omega')} \, \ud p \,\ud \mathring{\slashed{\sigma}}(\omg') \Big|  \ls \brk{t}^{-1-(\f{C_0}8-C_*)\ep}.
\end{split}
\end{equation}
Rearranging, we obtain
\begin{equation}\label{eq:main.cor.almost.weaker}
\begin{split}
&\: \sup_{r: c_1 t \leq r \leq c_2 t} \Big| \int_{\mathbb S^2} \int_{-\infty}^\infty G(\omega') \mathfrak A^2(p,\omg';\ep) \Big(\f {t+r}2 \Big)^{\f 12(\f{C_0}8-C_*)\ep+\frac{\ep}{2}G(\omega')A(p,\omega')} \, \ud p \,\ud \mathring{\slashed{\sigma}}(\omg') \Big| \ls \brk{t}^{-\f 12 (\f{C_0}8-C_*)\ep}.
\end{split}
\end{equation}
Compare this with \eqref{eq:main.cor.almost} where we had $G^2(\omg')$ instead of $G(\omg')$. The important point is that in the proof of Corollary~\ref{cor:main.1}, we only used that $G^2(\omg')$ is non-negative (and is strictly positive on an open dense set). Now by assumption $G(\omg')$ (or $-G(\omg')$) is already positive everywhere and thus we could just continue the rest of the argument using \eqref{eq:main.cor.almost.weaker}. \qedhere

\end{proof}

\bibliographystyle{amsplain}
\bibliography{main}

\end{document}